\documentclass[12pt]{amsart}

\usepackage{amsmath,amsthm,amsfonts,amssymb}
\usepackage[mathscr]{euscript}
\usepackage{graphicx}

\newcommand{\M}{\mathscr M}

\newcommand{\N}{\mathbb N}
\newcommand{\Z}{\mathbb Z}
\newcommand{\Q}{\mathbb Q}
\newcommand{\R}{\mathbb R}
\newcommand{\C}{\mathbb C}
\newcommand{\Circle}{\mathbb S^1}
\newcommand{\sm}{\setminus}

\newcommand{\0}{\texttt{0}}
\newcommand{\1}{\texttt{1}}
\newcommand{\s}{\underline{s}}

\newcommand\eps{\varepsilon}

\newtheorem{theorem}{Theorem}[section]
\newtheorem{lemma}[theorem]{Lemma}
\newtheorem{proposition}[theorem]{Proposition}
\newtheorem{definition}[theorem]{Definition}
\newtheorem{corollary}[theorem]{Corollary}

\theoremstyle{definition}
\newtheorem*{remark}{Remark}

\numberwithin{equation}{section}

\renewcommand{\theta}{\vartheta}
\newcommand{\ovl}[1]{\overline{#1}}

\newcommand{\lineclear}{\rule{0pt}{2pt}\newline}

\newcounter{reminder}[section]
\newcounter{treminder}

\newcommand{\reminder}[1]{\stepcounter{reminder}\stepcounter{treminder}$\rhd$\textsl{#1}$\lhd$\marginpar{\thesection.\arabic{reminder}$\lhd\rhd\lhd\rhd\lhd\rhd$}}

\renewcommand{\reminder}[1]{}

\newcommand{\hide}[1]{}

\title[Core Entropy of Quadratic Polynomials]{Core Entropy of Quadratic Polynomials 
}

\author{Dzmitry Dudko}
\author{Dierk Schleicher}
\author[]{With an appendix by Wolf Jung}

\address{G.-A.-Universit\"at zu G\"ottingen, Bunsenstra{\ss}e 3--5, D-37073 G\"ot\-tin\-gen, Germany}
\address{Jacobs University Bremen, Research I, Postfach 750 561, D-28725 Bremen, Germany}
\address{Gesamtschule Aachen-Brand, Rombachstra{\ss}e 99, D-52078 Aachen, Germany}

\begin{document}

\begin{abstract}
We give a combinatorial definition of ``core entropy'' for quadratic polynomials as the growth exponent of the number of certain precritical points in the Julia set (those that separate the $\alpha$ fixed point from its negative). This notion extends known definitions that work in cases when the polynomial is postcritically finite or when the topology of the Julia set has good properties, and it applies to all quadratic polynomials in the Mandelbrot set.

We prove that core entropy is continuous as a function of the complex parameter. In fact, we model the Julia set as an invariant quadratic lamination in the sense of Thurston: this depends on the external angle of a parameter in the boundary of the Mandelbrot set, and one can define core entropy directly from the angle in combinatorial terms. As such, core entropy is continuous as a function of the external angle.

Moreover, we prove a conjecture of Giulio Tiozzo about local and global maxima of core entropy as a function of external angles: local maxima are exactly dyadic angles, and the unique global maximum within any wake occurs at the dyadic angle of lowest denominator. We also describe where local minima occur.

An appendix by Wolf Jung relates different concepts of core entropy and biaccessibility dimension and thus shows that biaccessibility dimension is continuous as well.
\end{abstract}

\maketitle

\section{Introduction and Statement of Results}

Topological entropy of a topological dynamical system $f\colon K\to K$ measures the complexity of the dynamical system, roughly speaking as follows. If $K=\bigcup U_i$ is covered by some number of open sets, denote by $N(n)$ the number of allowed itineraries of length $n$: these are sequences $i(0),\dots,i(n-1)$ for which there exists some $x\in K$ with $f^{\circ k}(x)\in U_{i(k)}$ for $k=0,1,\dots,n-1$. Then the growth exponent of $N(n)$ is the topological entropy of $(K,f)$; more precisely, topological entropy is defined as $h=\limsup_n\frac{1}n \log N(n)$. For a precise definition and equivalent descriptions, see for instance \cite{BrinStuck} or \cite{deMelovanStrien}.

In their seminal paper \cite{MilnorThurston}, Milnor and Thurston investigated topological entropy of continuous interval maps. Specifically for the quadratic family $f_\lambda\colon x\mapsto \lambda x(1-x)$ acting on $I=[0,1]$ they proved that topological entropy as a function of $\lambda$ is monotone and continuous. 

Misiurewicz and Sz{\l}enk \cite{MisiurewiczSzlenk} wrote one of the early papers that gave various equivalent interpretations of topological entropy for interval maps, for example as growth exponents of intervals of monotonicity of $f^{\circ n}$ or of the number of periodic points of period $n$. Since then, there has been a lot of activity on related questions; see for instance \cite{deMelovanStrien} for an overview.

In the last years of his life, William Thurston raised the issue of finding a good definition of topological entropy of a complex polynomial $p$ of degree $d$. Viewing $p$ as a map on $\C$ or on the filled-in Julia set,  the entropy is clearly $\log d$ and not very interesting. The same holds of course when $p$ is a real polynomial (considered as a self-map of $\C$), but in that case there may be a dynamically interesting invariant interval in $\R$ that is invariant and that contains the orbits of the critical points: the restriction to this interval is the dynamically interesting object, and the aforementioned studies were concerned with self-maps of such intervals. What is the analogous object for complex polynomials, or what is an interesting definition of topological entropy?

If the polynomial $p$ is postcritically finite (that is, all critical points are periodic or preperiodic), then it has a natural invariant tree called its Hubbard tree $H$, and Thurston defined the \emph{core entropy} of $p$ as the topological entropy of the restriction to $H$; see \cite{bghkltt,taoli}. This entropy can also be defined in certain other cases: for instance, for certain maps one can define a finite tree in analogy to the Hubbard tree that is still invariant and contains the critical orbit, even if the latter is infinite (in analogy to the case of real polynomials). However, in the general case there are difficulties: the filled-in Julia set may not be path connected so that there may not be a tree, or the critical orbit may be dense in the filled-in Julia set so that any tree would have to be infinite, and topological entropy might seem to be equal to $\log d$ (this happens for a dense set of parameters on $\partial\M$, so if entropy with this definition was continuous then it would have to be constant). Thurston asked for a definition of core entropy that would work in all cases. Moreover, in a seminar in Cornell in the spring of 2012 he raised the question whether core entropy could be continuous as a function of the complex parameter. This resulted in a bet between him and John Hubbard, which was one original inspiration for starting our work.

One possible definition is in terms of \emph{biaccessibility dimension}: that is the Hausdorff dimension of all angles $\theta\in\Circle$ for which there exists another angle $\theta'\neq\theta$ so that the dynamic rays at angles $\theta$ and $\theta'$ land at a common point. Thurston had shown that in the cases for which his definition of core entropy applied that it was equal to biaccessibility dimension (up to a factor of $\log d$). Earlier we had shown \cite{MeerkampSchleicher} that the biaccessibility dimension is always less than $1$, except when the Julia set is an interval, strengthening earlier work by Smirnov \cite{Smirnov}, Zakeri \cite{Zakeri}, and Zdunik \cite{Zdunik}.
Quite recently, Tiozzo wrote an interesting thesis on core entropy and related questions \cite{TiozzoThesis}, in particular with a relation to the thermodynamic formalism. Some additional history, as well as the relation between core entropy and biaccessibility dimension in the general case, are given in the appendix by Wolf Jung.

Bill Thurston inspired a number of people to investigate core entropy. In particular, there are a survey on current work and open problems by Tan Lei~\cite{TanLeiEntropy}, a manuscript by Wolf Jung \cite{Jung}, and two manuscripts by Giulio Tiozzo \cite{TiozzoThesis,TiozzoPaper}.

We propose a general definition of core entropy that coincides with Thurston's in all cases he considered, and that does not require the Julia set to have particular properties (such as pathwise connectivity or the existence of an invariant compact tree that might generalize the Hubbard tree), nor does it require the concepts of thermodynamic formalism or biaccessibility dimension. Instead, our approach is purely combinatorial and thus applies in great generality; here we work only on the case of quadratic polynomials with connected Julia sets. Of course, in the cases where the usual definitions of topological entropy apply, our definition agrees with them.

More precisely, we define two entropy functions:
\begin{itemize}
\item $h\colon \Circle\to [0,\log 2]$, assigning to every external angle $\theta\in\Circle$ the core entropy of the lamination associated to the angle $\theta$.
\item $\tilde h\colon \M\to[0,\log2]$, assigning to every $c\in\M$ (the Mandelbrot set) the core entropy of the quadratic polynomial $z\mapsto z^2+c$. 
\end{itemize}

We describe these definitions in Section~\ref{Sec:Definitions}. Note that we define $\tilde h$ for every $c\in\M$, postcritically finite or not (and could extend it to every parameter $c\in\C$), and similarly for every $\theta\in\Circle$. These definitions are such that if the parameter ray at angle $\theta$ lands (or accumulates) at $c\in\partial \M$, then $h(\theta)=\tilde h(c)$. 

Our first result goes back to the bet between Bill Thurston and John Hubbard in the spring of 2012 and helped settle this bet soon after.

\begin{theorem}[Continuity of Core Entropy] \lineclear
Both entropy functions, $\tilde h\colon\M\to[0,\log 2]$ and $h\colon\Circle\to[0,\log 2]$, are continuous.
\end{theorem}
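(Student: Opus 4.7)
The plan is to prove continuity of the angle function $h\colon\Circle\to[0,\log 2]$ first, and then deduce continuity of the parameter function $\tilde h\colon\M\to[0,\log 2]$ from it. For the deduction one uses that parameter rays at rational external angles land at roots of hyperbolic components (for periodic angles) and at Misiurewicz points (for preperiodic angles), that the union of these landing parameters is dense in $\partial\M$, and that the combinatorial equality $\tilde h(c)=h(\theta)$ holds at every landing pair; combined with the fact that $\tilde h$ is constant on each hyperbolic component (taking the value at its root) and with the wake structure of $\M$, continuity of $h$ will propagate to continuity of $\tilde h$.

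For the continuity of $h$, let $N_k(\theta)$ denote the number of precritical separators of generation at most $k$ in the invariant quadratic lamination $L(\theta)$, so that $h(\theta)=\limsup_{k\to\infty}\tfrac{1}{k}\log N_k(\theta)$, and split the statement into upper and lower semicontinuity. Upper semicontinuity follows from Hausdorff compactness of closed laminations: if $\theta_n\to\theta$, then after passing to a subsequence the laminations $L(\theta_n)$ Hausdorff-converge to a closed invariant lamination $L^\ast$ containing $L(\theta)$, and any precritical separator in $L(\theta_n)$ of generation at most $k$ that persists along the subsequence is represented by a precritical separator in $L(\theta)$ of generation at most $k$. Hence $\limsup_n N_k(\theta_n)\le N_k(\theta)$ for every $k$, and passing to exponential growth rates yields $\limsup_n h(\theta_n)\le h(\theta)$.

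Lower semicontinuity is the main obstacle, because a small perturbation can rearrange the lamination in ways that naive counts do not survive: leaves may cross the critical chord, short separators may coalesce, and renormalizable (tuning) structures may appear or vanish. The strategy I would pursue is, for each large $k$, to construct a finite Markov model at $\theta$ — a transition matrix on a finite symbolic alphabet indexed by selected precritical separators of generation at most $k$ — whose spectral radius $\lambda_k$ approximates $h(\theta)$ from below with $\tfrac{1}{k}\log\lambda_k\to h(\theta)$, and then to verify that the same model, with no fewer transitions, is realised in $L(\theta')$ for all $\theta'$ in a suitable combinatorial neighbourhood of $\theta$. The hardest case is when $\theta$ is dyadic or lies on the boundary of a tuning wake, so that the critical leaf of $L(\theta)$ itself has very small generation; there one has to balance the precritical separators lost on one side of the critical chord against those produced on the other, which is closely tied to the dyadic-maximum phenomenon proved later in the paper. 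Once both semicontinuities are established, $h$ is continuous on $\Circle$ and, via the transfer described above, $\tilde h$ is continuous on $\M$.
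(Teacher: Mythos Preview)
Your proposal has the two semicontinuity directions essentially reversed, and the argument you give for the ``easy'' one is in fact wrong.

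The Hausdorff-limit argument does not yield $\limsup_n N_k(\theta_n)\le N_k(\theta)$. As $\theta$ varies, precritical leaves of fixed generation move continuously, but at special angles (periodic $\theta$, boundaries of wakes) they can degenerate or cease to separate; the count $N_k(\theta)$ is \emph{lower} semicontinuous in $\theta$, not upper --- this is exactly Lemma~\ref{Lem:IrrationalAngle} in the paper. Consequently what you can extract from persistence of finite combinatorics (your ``Markov model'' idea, or equivalently monotonicity as in Lemma~\ref{Lem:Monotonicity}) is $\liminf_n h(\theta_n)\ge h(\theta)$, the lower bound. That is the easy direction, and the paper dispatches it in a line.

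The genuine content is the upper bound $\limsup_n h(\theta_n)\le h(\theta)$: one must rule out that side branches near $\theta$ carry substantially more entropy. This cannot be done by soft compactness arguments; in the paper it rests on three ingredients you have not supplied. First, a topological surgery on dyadic Hubbard trees (Proposition~\ref{Prop:RelationDyadicTrees} and Proposition~\ref{Prop:InjectionPrecritical}) proving that dyadic angles are strict local maxima of $h$ (the Tiozzo conjecture, Corollary~\ref{Cor:TiozzoConj}). Second, a quantitative ``bound on entropy increase'' (Lemma~\ref{Lem:BoundEntropyIncrease}) showing that dyadic veins of high generation branching off an arc of small entropy variation themselves have small entropy excess. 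Third --- and this is the hardest step --- continuity of $h$ along each combinatorial vein, which for irrational endpoints requires the uniform automata estimates of Section~\ref{Sec:IrratEndpoints}. Your sketch does not address any of these, and the case you flag as hardest (dyadic $\theta$) is in fact where upper semicontinuity is immediate once one knows dyadics are local maxima; the truly delicate case is an irrational endpoint, where one must show the entropy along the approximating vein actually climbs all the way up to $h(\theta)$.
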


We prove continuity of $h$ in Theorem~\ref{Thm:Continuity}; the fact that this implies continuity of $\tilde h$ is easy and is explained in Section~\ref{Sec:Definitions}. An independent proof of continuity of core entropy can be found in the recent manuscript \cite{TiozzoPaper}.  

\begin{figure}[tb]
\framebox{
\includegraphics[width=.9\textwidth]{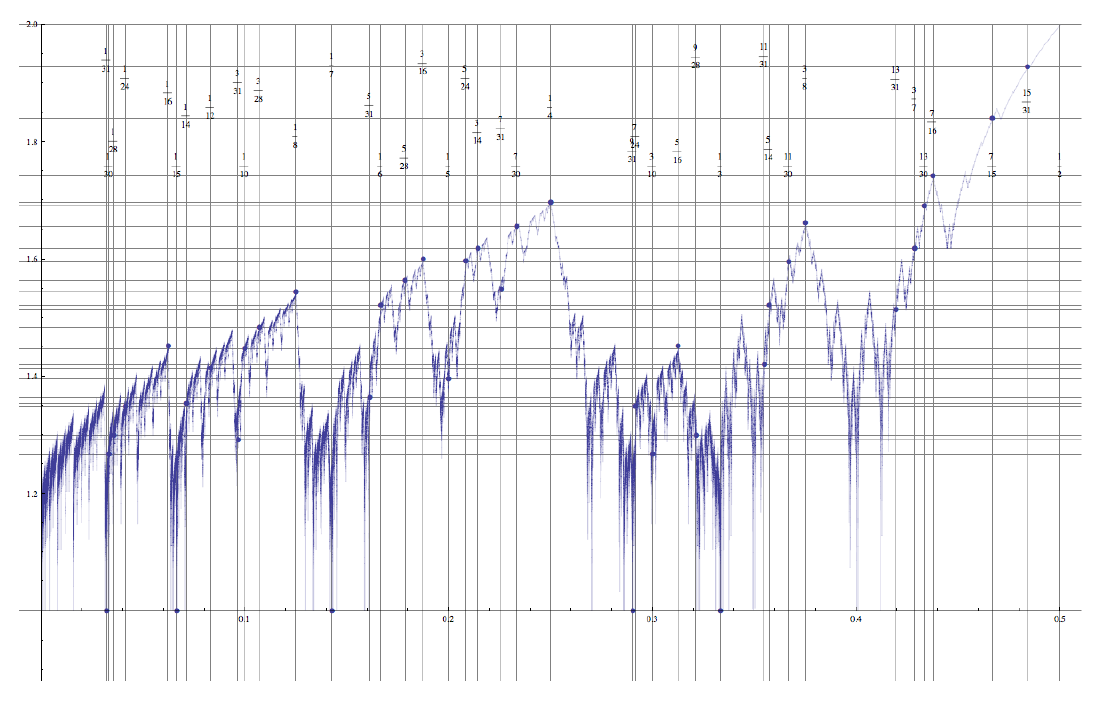}
}
\caption{The graph of the core entropy function $h\colon[0,1/2]\to[0,\log 2]$ (by Bill Thurston).}
\label{Fig:EntropyGraph}
\end{figure}

Our second result settles a conjecture of Giulio Tiozzo \cite[Conjecture~1.6]{TiozzoThesis}.

\begin{theorem}[Local Maximal of Core Entropy]
\lineclear
The entropy function $h\colon\Circle\to[0,\log 2]$ has the following properties.
\begin{enumerate}
\item[a)] 
Every dyadic angle is an isolated local maximum of the entropy function.

\item[b)]
Conversely, every local maximum of $h$ is dyadic. 
\item[c)] 
Within every wake, the entropy function has a unique global maximum, and it occurs at the unique dyadic angle of lowest denominator in the wake.
\end{enumerate}
\end{theorem}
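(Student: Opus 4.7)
The plan is to prove part (c) first and then derive (a) and (b) from it, using the continuity of $h$ established earlier in the paper.

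Part (c) is the heart of the matter. Fix a wake $W$, viewed as an open arc of $\Circle$ bounded by two rational angles that land at a common root. The combinatorics of $W$ determines a finite ``skeleton'' of leaves that appears in the lamination $L(\theta)$ for every $\theta \in W$. Among all $\theta \in W$, the unique dyadic $\theta^* \in W$ of lowest denominator is characterized as the angle whose critical orbit terminates at the $\beta$ fixed point in the minimum number of steps allowed by the skeleton of $W$. My approach is to set up, for each $\theta \in W$, a directed graph (or transition matrix) whose paths are in bijection with the separating precritical leaves counted by $h(\theta)$; this graph is largest for $\theta = \theta^*$ because the terminated critical orbit imposes the fewest forbidden transitions, while for any other $\theta$ the longer or infinite critical orbit cuts at least one edge. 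A Perron--Frobenius comparison of spectral radii then yields $h(\theta) \le h(\theta^*)$, with strict inequality whenever a forbidden transition appears, which is the case for every $\theta \ne \theta^*$.

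Part (a) is essentially immediate from (c). Every dyadic angle $\theta_0$ is the lowest-denominator dyadic of a unique smallest wake $W(\theta_0)$ that contains it in its open interior, a fact that follows from the standard tree structure of wakes on $\Circle$. Applying (c) to $W(\theta_0)$ gives $h < h(\theta_0)$ strictly on $W(\theta_0) \setminus \{\theta_0\}$, so $\theta_0$ is an isolated local maximum of $h$.

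For part (b), suppose $\theta \in \Circle$ is not dyadic; I would exhibit angles $\theta_n \to \theta$ with $h(\theta_n) > h(\theta)$. There is a canonical descending sequence of wakes $W_0 \supset W_1 \supset \cdots$ with $\theta \in \bigcap_n W_n$, coming from the tree of wakes. If this sequence is infinite, the diameters of the $W_n$ shrink to zero (by combinatorial control of the wake tree: the bounding angles of a period-$n$ wake have denominators at least $2^n-1$), so the dyadic maxima $\theta^*_n$ of the $W_n$ satisfy $\theta^*_n \to \theta$ and $h(\theta^*_n) > h(\theta)$ by (c). If the sequence terminates with $\theta$ on the boundary of some $W_n$, then $\theta$ is rational non-dyadic, and one produces arbitrarily small sub-wakes on the other side of $\theta$ whose dyadic maxima approach $\theta$ with strictly larger $h$-values. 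The main anticipated obstacle is the monotonicity/comparison of transition matrices in (c), which is where the combinatorial subtleties of the forbidden words and their effect on the Perron eigenvalue really have to be controlled; the wake-shrinkage and the derivations of (a) and (b) are then formal.
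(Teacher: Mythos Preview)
Your plan has a genuine gap at exactly the point you yourself flag as ``the main anticipated obstacle'': the comparison of transition matrices in part~(c). For a general $\theta\in W$ the lamination $L_\theta$ is not postcritically finite, so there is no finite directed graph or transition matrix to which Perron--Frobenius applies; you would first have to reduce to the postcritically finite (in fact dyadic) case. Even once you do restrict to dyadic angles, the Hubbard trees of two dyadic parameters in the same wake are \emph{not} related by one being a subgraph of the other, so the slogan ``a longer critical orbit cuts at least one edge'' does not translate into an inclusion of transition graphs. Making this comparison rigorous is precisely the content of Section~3: one proves, via an explicit topological surgery on Hubbard trees (Propositions~\ref{Prop:RelationDyadicTrees} and~\ref{Prop:InjectionPrecritical}), that whenever $c\lhd c'$ for dyadic parameters there is a generation-preserving injection from precritical points of $p_c$ on $[-\alpha,\alpha]$ to those of $p_{c'}$, and that the injection misses an interval that eventually covers $[-\alpha,\alpha]$, forcing $\tilde h(c)<\tilde h(c')$. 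This is the substantive step you are missing.

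The paper's logical order is also different from yours, and this matters. It first proves the strict dyadic inequality above (Theorem~\ref{Thm:RelationDyadicTrees}), then passes from dyadics to arbitrary angles not via continuity of $h$ but via a perturbation lemma at the level of the counting functions: for each fixed $n$ one has $N_\theta(n)\le N_{\theta'}(n)$ for all dyadic $\theta'$ close to $\theta$ (Lemma~\ref{Lem:IrrationalAngle}). From these two ingredients one derives (a), (c), (b) directly in Corollary~\ref{Cor:TiozzoConj}; in particular (b) is handled by choosing a nearby dyadic $\theta'$ with $\theta\in I(\theta')$ and invoking the unique maximum on $I(\theta')$, rather than by a shrinking chain of wakes. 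Note finally that you propose to use continuity of $h$; in the paper continuity (Theorems~\ref{Thm:ContinuityNearVeins} and~\ref{Thm:Continuity}) is proved \emph{after} and \emph{using} Corollary~\ref{Cor:TiozzoConj}, so relying on it here would be circular within this paper's structure.
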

We will prove this in Theorem~\ref{Thm:TiozzoConj}.

In Section~\ref{Sec:Definitions}, we give all relevant definitions, in particular of the two entropy functions $h$ and $\tilde h$, and of invariant laminations; we define a wake as a closed interval in $\Circle$ bounded by two rational angles so that the corresponding rays land together (or more generally as any closed interval for which the two boundary angles have the same angled internal address, so the corresponding parameter rays land together at the same point in $\M$, or at least in its combinatorial models).

The fundamental construction will be carried out in Section~\ref{Sec:TopSurgery}: we define a surgery construction of dyadic Hubbard trees (that is, Hubbard trees associated to external angles $a/2^k$) that lowers the dyadic exponent $k$  and increases the core entropy. We also describe the vein structure of $\M$ and introduce a relation $\lhd$ between dyadic veins.

From this construction, it is no longer difficult to prove Tiozzo's conjecture on maxima of $h$; this will be done in Section~\ref{Sec:IrrationalTiozzoConj}: here we have to extend the estimates on entropy to non-dyadic external angles. We also discuss local minima of $h$ and $\tilde h$ in this section.

In Section~\ref{Sec:Continuity} we then discuss continuity of entropy. The main result is that for any parameter $c\in\M$, if entropy is continuous along the (combinatorial) path from $0$ to that endpoint, then it is continuous in $\M$ at each of these points on the path. The difference is that we \emph{assume} continuity on a path, but continuity (the conclusion) makes a claim about an entire neighborhood in $\C$. It is known, by work of Tiozzo and Jung, that entropy along many paths in $\M$ is continuous, in particular to all dyadic endpoints. It remains to prove that entropy is continuous along paths to irrational endpoints of $\M$, and that we prove in Section~\ref{Sec:IrratEndpoints}. 

Finally, in a brief appendix, Wolf Jung discusses the relations of core entropy with biaccessibility dimension, and concludes that biaccessibility dimension is a continuous function as well.

\emph{Acknowledgements}. We would like to thank Henk Bruin, John Hubbard, Tan Lei, Mikhail Lyubich, John Milnor, Bill Thurston, Giulio Tiozzo, Jean-Christophe Yoccoz and especially Wolf Jung for interesting and helpful discussions. In the spring of 2014, we had the opportunity to give various presentations about this result in Bremen, Moscow, and Stony Brook, and we thank the audiences for their questions and suggestions.
Finally, we would like to thank Cornell University and the ICERM institute in Providence for their hospitality and support in the spring of 2012 where many of our initial discussions were carried out that yielded our first version of the proof.
\goodbreak


\section{Definitions}
\label{Sec:Definitions}  
In this section we introduce our combinatorial definition of topological entropy that applies to all (quadratic) polynomials with connected Julia set, whether or not they are locally connected and whether or not they have (generalized) Hubbard trees, and if so whether the latter are compact. One advantage of our approach is that we work in an entirely combinatorial setting, so we never have to worry about topological issues. We work on invariant (quadratic) laminations as introduced by Thurston \cite{ThurstonLaminations}.

For every angle $\theta\in\Circle\setminus\{0\}$, there is a unique invariant quadratic lamination where the minor leaf either ends at $\theta$ or is the degenerate leaf at $\theta$. This lamination will be called $L_\theta$.

A \emph{precritical leaf of generation $n$} in this lamination will be any leaf on the backwards orbit of one of the two major leaves that takes $n$ generations to map to the minor leaf. Arbitrarily choose one of these two major leaves as \emph{preferred major leaf} (in the special case that the minor leaf is degenerate, there is only one major leaf, which is a diameter, and in this case there is no choice).

In an invariant quadratic lamination, the \emph{$\alpha$ gap} is either the leaf connecting the two angles $1/3$ and $2/3$, or it is the unique gap that is fixed by the dynamics (a finite or infinite polygon).

\begin{definition}[Relevant Precritical Leaf and Entropy Associated to External Angle]
\label{Def:CoreEntropy}
We call a precritical leaf \emph{relevant} if it separates the $\alpha$ gap from its negative and if it is on the backwards orbit of the preferred major leaf, and define $N(n)=N_\theta(n)$ as the number of relevant precritical leaves of generation $n$. We define the \emph{core entropy} of this lamination as $h=h(\theta):=\limsup_n\frac 1 n \log N_\theta(n)$. 

In the special case $\theta=0$, the lamination is trivial and we naturally have $N_{\theta}(n):=0$.
\end{definition}

If the $\alpha$ gap is an infinite polygon, then $N_\theta(n)=0$ except for $n=1$, so $h(\theta)=0$. 

\begin{remark}
A natural question is whether the $\limsup$ in the definition of entropy can be replaced by a simple $\lim$. This is not always so: Wolf Jung kindly pointed out to us the example of $c=c(9/56)$ where the Hubbard tree has the shape of a $\textsf{Y}$ so that the branch point is fixed and one endpoint maps to the second, which maps to the third, which in turn maps to the branch point. Here $N(n)=0$ for infinitely many $n$ while $h=(\log 2)/3>0$. There are thus counterexamples when the dynamics is renormalizable. We prove that immediate satellite renormalizability is the only obstruction (see Corollary~\ref{Cor:ExistenceLimit}). For now, observe that in the postcritically finite non-renormalizable case, the limit exists and equals the $\limsup$ because the associated subshift of finite type is irreducible.

In definition of relevant precritical leaf, we only count those leaves that are preimages of the preferred major leaf. Without this restriction, the count of $N(n)$ would increase by a factor $2$, which would yield the same definition of entropy, but we would lose monotonicity of $N(n)$ as in Lemma~\ref{Lem:Monotonicity} for the stupid reason that the degenerate minor leaf has only half as many preimages (of course, an alternate way to remedy this problem would be to count preimages of a degenerate minor leaf with multiplicity two). 
\end{remark}

Thurston showed that the union of all minor leaves of all invariant quadratic laminations forms itself a lamination, called the \emph{quadratic minor lamination} QML \cite{ThurstonLaminations}. It turns out that $h$ is naturally defined on QML: since both ends of any leaf in QML define the same lamination, we can first extend the definition of $h$ to each separate leaf on QML. Complementary components of leaves in QML are called \emph{gaps}, and they come in two kids: either they are finite polygons (corresponding to Misiurewicz-Thurston parameters) or have infinitely many boundary leaves (and describe hyperbolic components). In both cases, it is easy to see that $h$ is constant on all boundary leaves of any gap, so $h$ naturally extends to the disk on which QML is defined, as a constant function on the closure of each leaf and each gap. 

The equivalence relation defining QML is closed, so the quotient of the supporting closed unit disk by collapsing all leaves to points yields a topological Hausdorff space called the ``abstract Mandelbrot set'' $\M_{abs}$ (this construction is known as Douady's ``pinched disk model'' of $\M$). Since $h$ is constant on fibers of the quotient map $q\colon \ovl{\mathbb D} \to \M_{abs}$, it follows that $h$ is naturally a function on $\M_{abs}$. 

Finally, there is the natural projection $\pi\colon\M\to\M_{abs}$ from the Mandelbrot set $\M$ to the abstract Mandelbrot set $\M_{abs}$. It is defined by mapping every landing point $c(\theta)$ of any rational parameter ray $\theta$ to the equivalence class of the angle $\theta$; then $\pi\colon\partial\M\to\partial\M_{abs}$ is the unique continuous extension, and from here it is easy to extend $\pi$ to a continuous map $\M\to\M_{abs}$ that on each component of the interior is either injective or constant (for details, see Douady~\cite{DouadyCompacts}).

\looseness-1
We thus obtain a unique map $\tilde h=h\circ\pi\colon\M\to[0,\log 2]$, and continuity of $\tilde h$ follows from continuity of $h$, with continuity of $\pi$ being well known.
More specifically, the map $\tilde h$ can also be constructed explicitly as follows.

We define a \emph{ray pair} $RP(\phi,\phi)$ to be the union of two rays (in a dynamical plane or in parameter space) that land (or perhaps accumulate) at a common point, together with the union of their accumlation sets. Every ray pair divides $\C$ into two open components.

\begin{definition}[Entropy Associated to Quadratic Polynomial in $\M$]
\label{Def:CoreEntropyTilde}
Let $p_c(z):=z^2+c$ be any quadratic polynomial with $c\in\C$ for which the critical value is in the Julia set, and let $\theta$ be the external angle of any parameter ray that lands or accumulates at the parameter $c$. The \emph{critical ray pair} will be the ray pair $RP(\theta/2,(1+\theta)/2)$, and a precritical ray pair of generation $n$ will be any ray pair on the backwards orbit of the critical ray pair that takes $n$ iterations to map to the ray $R(\theta)$.

If $p_c$ is such that the critical value is in the Fatou set, then it has an attracting or parabolic orbit and there is a unique periodic characteristic ray pair that lands on the boundary of the Fatou component containing the critical value; precritical ray pairs are then defined as ray pairs on its backward orbit, and their generation is the number of iterations it takes to map to the characteristic ray pair.

We call a precritical ray pair \emph{relevant} if it separates the $\alpha$ fixed point from its negative, and define $\tilde N(n)$ as the number of relevant precritical ray pairs of generation $n$. We define the \emph{core entropy} of $p_c$ as $\tilde h=\tilde h(c):=\limsup_n\frac 1 n \log \tilde N(n)$.
\end{definition}

Note that we use the term ``separation'' in a combinatorial sense: the two rays in a separating ray pairs either land together or accumulate at the same fiber. --- We do not wish to (or need to) deal with topological subtleties such as whether, in the Cremer case, a dynamic ray indeed accumulates at the critical value: every $c\in\M$ has naturally associated one or several external angles $\theta$ that define its dynamics, and this is sufficient for our combinatorial approach.

We need to fix some notation on Hubbard trees of postcritically finite  polynomials. The \emph{Hubbard tree} is a minimal tree within the filled-in Julia set connecting the postcritical set (subject to a natural condition on how to traverse bounded Fatou components in case some critical points are periodic). 
The \emph{marked points} or \emph{vertices} of the Hubbard tree are the endpoints, branch points, and the postcritical points. (In fact, all endpoints are postcritical points; critical points are not included in their own right, but they might be postcritical, for instance when they are periodic, and when the degree is greater than $2$ then they might also be branch points.) Since the set of vertices is forward invariant, every edge (a closed arc connecting two vertices) maps over one or several entire edges, so that the image contains every edge the interior of which intersects the image; in other words, the edges form a Markov partition on the Hubbard tree. --- Here, we will only discuss quadratic polynomials and Hubbard trees.

The usual definition of core entropy is modeled after the postcritically finite case. The finite set of edges on the tree form a Markov chain with associated transition matrix, where the matrix element $M_{i,j}$ is $0$, $1$, or $2$ if the edge $e_i$ covers the edge $e_j$ respectively $0$, $1$, or $2$ times. Having only non-negative real entries, this matrix has a leading eigenvalue which is real, and its logarithm is defined as the core entropy of the given postcritically finite parameter. 
This definition coincides with the classical definition of topological entropy of general dynamical systems, and it also applies in the postcritically infinite case as long as the Hubbard tree is defined (i.e.\ the Julia set is path connected) and still finite. However, the number of edges of the Hubbard trees is not locally bounded even among postcritically finite maps, which makes entropy estimates based on these transition matrices difficult.

It is well known, at least in the postcritically finite case, that if $x$ is any point on the Hubbard tree and $N_x(n)$ is the number of preimages of $x$ of generation $n$, then $h=\limsup_n \log N_x(n)$ is the core entropy. Since each of the finitely many edges, except those within ``renormalizable little Julia sets'', will cover the entire tree after finitely many iterations (Lemma~\ref{Lem:HubbardTreeRenormalization}), one can as well count only those preimages of $x$ that are on an arbitrary subset of the edges of the Hubbard tree, as long as at least one of these edges is not in a renormalizable little Julia set. For instance, instead of counting precritical leaves on $[\alpha,-\alpha]$ (as in our definition above) we may count preimages on $[\alpha,\beta]$ (as we will do in Section~\ref{Sec:Continuity}) or on $[\beta,-\beta]$.

\begin{lemma}[Definitions of Core Entropy Coincide]
\label{Lem:defEntropy} \lineclear
For postcritically finite polynomials, the core entropy as in Definition~\ref{Def:CoreEntropyTilde} coincides with the usual definition (in terms of transition matrices on finite Hubbard trees). 

\looseness-1
If for $p_c$ the critical value is in the impression of the dynamic ray at angle $\theta$,  the core entropy of $p_c$ equals the core entropy of the lamination $L_\theta$. 
\end{lemma}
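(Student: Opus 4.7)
The plan is to relate the combinatorial counts in Definitions~\ref{Def:CoreEntropy} and~\ref{Def:CoreEntropyTilde} to counts of precritical points on an arc in the Hubbard tree, and then to invoke the standard fact (cited in the paragraph preceding the lemma) that this latter count recovers the transition-matrix entropy.

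For part (1), let $p_c$ be postcritically finite with the critical value landing at the end of the parameter ray at angle $\theta$. The Thurston lamination $L_\theta$ is then the combinatorial model of the Julia set $J_c$: the quotient of the closed disk by the equivalence relation generated by the leaves of $L_\theta$ is homeomorphic to the filled Julia set, and this quotient conjugates the angle-doubling map on external angles with $p_c|_{J_c}$. Under this identification, the two major leaves of $L_\theta$ correspond to the critical ray pair, the $\alpha$ gap collapses to the $\alpha$ fixed point, and the antipodal gap collapses to $-\alpha$. Thus precritical leaves of generation $n$ in $L_\theta$ are in natural bijection with precritical points of generation $n$ on $J_c$ which are accessed by at least two external rays, and a precritical leaf is relevant (separates the $\alpha$ gap from its negative) precisely when the corresponding precritical point lies on the open arc $(\alpha,-\alpha)$ of the Hubbard tree. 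Therefore $N_\theta(n)$ equals the number of such precritical points on $(\alpha,-\alpha)$. By the remark in the paragraph preceding the lemma, in the non-renormalizable case the $\limsup$ of these counts coincides with the transition-matrix entropy; in the renormalizable case the same argument applies to the renormalization, where generations on both sides scale by the small-type period, so the two entropies still agree.

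For part (2), suppose the critical value of $p_c$ lies in the impression of the dynamic ray at angle $\theta$. Then $L_\theta$ serves as the combinatorial model of $J_c$ in the same sense as above: two external angles are identified by $L_\theta$ if and only if the corresponding dynamic rays of $p_c$ accumulate on a common combinatorial fiber of $J_c$. This produces a generation-preserving bijection between precritical leaves of $L_\theta$ and precritical ray pairs of $p_c$, under which the two notions of ``relevant''—separating the $\alpha$ gap from its negative in the disk, and separating the $\alpha$ fixed point from its negative in the combinatorial sense fixed after Definition~\ref{Def:CoreEntropyTilde}—correspond. Hence $\tilde N(n) = N_\theta(n)$ for every $n$, and $\tilde h(c) = h(\theta)$.

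The main obstacle is making the semiconjugacy between the lamination model and the dynamical plane rigorous in the postcritically infinite situation of part (2): the Julia set need not be locally connected, so one cannot speak of landing points of individual rays and must instead work with combinatorial fibers. One needs to verify that the identification of external angles coming from coincidence of combinatorial fibers agrees exactly with $L_\theta$; this follows from the characterization of $L_\theta$ as the invariant lamination whose minor leaf carries the combinatorial class of the critical value, but the translation between the ``impression'' formulation in the hypothesis and this characterization should be made explicit. Once that is in place, the correspondence of relevant precritical objects is immediate and the two entropies are equal by construction.
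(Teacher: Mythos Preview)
The paper does not actually supply a proof of this lemma: it is stated immediately after the discussion of the postcritically finite case and left without a \texttt{proof} environment. The surrounding text (the two paragraphs preceding the lemma) sketches exactly the ingredients you use---that counting preimages of a single point on any edge not entirely inside a little Julia set recovers the transition-matrix entropy, and that the lamination $L_\theta$ models the ray-pair structure in the dynamical plane---so your write-up is a faithful expansion of what the authors leave implicit.

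One point worth tightening in part~(1): your treatment of the renormalizable case (``the same argument applies to the renormalization, where generations on both sides scale by the small-type period'') is not quite the mechanism the paper indicates. The paper's remark is rather that the arc $[\alpha,-\alpha]$ always contains edges outside any renormalizable little Julia set (it contains the critical point, and except in the main-cardioid case it is not contained in a single little Julia set), so the count on $[\alpha,-\alpha]$ already sees the full growth rate without passing to a renormalization. Your scaling argument would be needed only in the immediate-satellite situation, and even there one should say why the count on $[\alpha,-\alpha]$ matches the renormalized count; the paper handles that separately in Lemma~\ref{Lem:Renormalization}. Otherwise your argument is sound and matches the paper's intended (if unwritten) proof.
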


It is well known that if several parameter rays accumulate at the same parameter in $\M$, then the laminations associated to their corresponding angles coincide, so these angles have the same entropy (if more than two rays accumulate at the same parameter, then the parameter is a Misiurewicz-Thurston parameter and the rays actually land there). 

If $\theta\in\Circle$ is so that the parameter ray of $\M$ at angle $\theta$ lands (in particular, if $\theta$ is rational), we define $c(\theta)$ as the landing point in $\M$ of the parameter ray at angle $\theta$, and within any connected Julia set we define $z(\theta)$ as the landing point of the dynamic ray at angle $\theta$ (if the latter ray lands). 


We define a partial order on $\M$ as follows: if $c,\tilde c$ are two parameters in $\M$, we say that $c\prec \tilde c$ if there is a parameter ray pair $RP(\phi^-,\phi^+)$ at periodic angles that separates $\tilde c$ simultaneously from $c$ and from the origin. 

Following Milnor~\cite{MiOrbits}, a periodic or preperiodic ray pair $RP(\phi^-,\phi^+)$ (with $\phi^-,\phi^+\in\Circle$) in the dynamical plane of $p_c$ is called \emph{characteristic} if the dynamic rays $R(\phi^-)$ and $R(\phi^+)$ land together in such a way that they separate the critical value $c$ from the critical point $0$ as well as from all other rays landing at $\bigcup_{k\ge 0}p_{c}^{\circ k}(x)$.
It is well known that every periodic ray pair $RP(\phi^-,\phi^+)$ has a unique characteristic ray pair on its forward orbit.

  Let us now state the Correspondence Theorem relating the combinatorics of dynamical and parameter ray pairs; see for instance \cite{MiOrbits} or \cite{MandelBranch}. 

\begin{theorem}[Correspondence Theorem]
\label{thm:Corresp} \lineclear
A ray pair $RP(\phi^-,\phi^+)$ in the dynamical plane of $p_c$ with $\phi^\pm$ rational  is characteristic if and only if the parameter rays with angles $\phi^-$ and $\phi^+$ land together and separate the parameters $c$ and $0$ from each other.
\end{theorem}

\begin{lemma}[Monotonicity of Lamination and of Entropy]
\label{Lem:Monotonicity} \lineclear
If $c\prec \tilde c$, then $N_c(n) \le N_{\tilde c}(n)$ for all $n$ and thus $h(c) \le h(\tilde c)$. Moreover, any characteristic leaf in $L_c$ also occurs in $L_{\tilde c}$. 
\end{lemma}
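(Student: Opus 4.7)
The lemma has two conclusions: the characteristic leaf inclusion and the counting inequality $N_{\tilde c}(n)\ge N_c(n)$. The entropy inequality $h(\tilde c)\ge h(c)$ follows from the latter. I would prove these in two steps and then discuss the main obstacle.

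\emph{Step 1 (characteristic leaf inclusion).} The hypothesis $c\prec\tilde c$ furnishes a periodic parameter ray pair $RP(\phi^-,\phi^+)$ separating $\tilde c$ from both $c$ and the origin, so $\tilde c$ lies in a sub-wake of $c$. By the Douady--Hubbard wake theorem, the dynamic rays at the characteristic angles $\theta_c^\pm$ of $c$ land together at a common pre-characteristic point in the Julia set of $p_{\tilde c}$. Combinatorially this translates into the chord $m_c=(\theta_c^-,\theta_c^+)$ being a leaf of $L_{\tilde c}$. Because $L_{\tilde c}$ is forward and backward invariant under angle doubling, the entire invariant hull of $m_c$---which is exactly $L_c$---sits inside $L_{\tilde c}$ (up to diagonals of finite gaps). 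In particular every characteristic leaf of $L_c$ is a leaf of $L_{\tilde c}$.

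\emph{Step 2 (counting inequality).} A precritical leaf of generation $n$ is obtained from the minor leaf by $n$ backward doubling steps, each selecting one of the two major-leaf branches. Labeling the branches by the half of the circle into which the preimage endpoints map yields an address $\vec s\in\{0,1\}^n$. Applied to $m_c$, such an address produces a leaf $\ell_c(\vec s)\in L_c$; applied to $m_{\tilde c}$, it produces a leaf $\ell_{\tilde c}(\vec s)\in L_{\tilde c}$. Define $\Phi\colon\ell_c(\vec s)\mapsto\ell_{\tilde c}(\vec s)$; distinct addresses land in distinct dyadic regions of $\Circle$, so $\Phi$ is injective.

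To see that $\Phi$ preserves relevance, observe that an address $\vec s$ determines an open sector $S(\vec s)\subset\Circle$---an $n$-fold refinement of the partition induced by preimages of the critical diameter---which contains both endpoints of the resulting preimage leaf. Whether such a leaf separates the $\alpha$-gap from its negative is determined entirely by $S(\vec s)$ together with the positions of those two gaps, which are the same for $L_c$ and $L_{\tilde c}$ (the relation $c\prec\tilde c$ forces $c$ and $\tilde c$ into the same primary wake, so the combinatorial $\alpha$-gap coincides). Consequently $\ell_c(\vec s)$ is relevant iff $\ell_{\tilde c}(\vec s)$ is relevant, and $\Phi$ restricts to an injection from relevant precritical leaves of $L_c$ of generation $n$ into those of $L_{\tilde c}$, giving $N_{\tilde c}(n)\ge N_c(n)$ and hence $h(\tilde c)\ge h(c)$.

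The principal obstacle is verifying admissibility of each address for $L_{\tilde c}$, i.e.\ checking that every backward branch selected by $\vec s$ actually produces a leaf and not a chord forbidden by crossing some existing leaf. The inclusion $L_c\subseteq L_{\tilde c}$ from Step 1 is the key tool: every branch legal in $L_c$ is automatically legal in $L_{\tilde c}$, and the separation statement ``$m_c$ separates $m_{\tilde c}$ from the $\alpha$-gap'' permits the address to be lifted coherently from $L_c$ to $L_{\tilde c}$. The bookkeeping becomes most delicate in the periodic (Fatou-gap) case, where some backward branches collapse under the minor leaf's periodicity and care is needed to preserve injectivity.
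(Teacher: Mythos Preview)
Your overall architecture is reasonable, but the argument in Step~2 has a genuine gap, and the paper closes it by a different (and simpler) geometric mechanism that you never invoke.

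The problem is your relevance claim. You assert that whether $\ell(\vec s)$ separates the $\alpha$-gap from its negative ``is determined entirely by $S(\vec s)$ together with the positions of those two gaps''. But $S(\vec s)$ is built from iterated preimages of the \emph{critical diameter}, and that diameter depends on the lamination: it sits at $\theta_c/2,(\theta_c+1)/2$ for $L_c$ and at $\theta_{\tilde c}/2,(\theta_{\tilde c}+1)/2$ for $L_{\tilde c}$. So $\ell_c(\vec s)$ and $\ell_{\tilde c}(\vec s)$ live in \emph{different} sectors $S_c(\vec s)\neq S_{\tilde c}(\vec s)$, and knowing that $\ell_c(\vec s)$ separates $\alpha$ from $-\alpha$ in its sector tells you nothing a priori about $\ell_{\tilde c}(\vec s)$ in its sector. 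Your attempted fix via ``$L_c\subseteq L_{\tilde c}$ up to diagonals'' does not address this, and in fact the major leaves of $L_c$ are typically diagonals of gaps in $L_{\tilde c}$, so the inclusion is too weak to carry the separation information.

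The paper's argument bypasses addresses entirely. The key geometric fact is that the major leaf (or leaves) of $L_{\tilde c}$ lies \emph{between} the two major leaves of $L_c$; this is immediate from $c\prec\tilde c$ (the minor of $L_{\tilde c}$ is nested inside the region cut off by the minor of $L_c$). Now one pulls back: a precritical ``leaf'' of $L_c$ of generation $n$ is really a parallel \emph{pair} of leaves (preimages of the two majors), and the nesting is preserved under pullback, so the corresponding precritical leaf of $L_{\tilde c}$ sits physically between the two members of that pair. Hence if the $L_c$-pair separates $\alpha$ from $-\alpha$, the $L_{\tilde c}$-leaf trapped between them does so as well. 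This nesting is exactly the fact your argument needs but never states; once you have it, the address bookkeeping and the ``admissibility'' worry in your final paragraph both become unnecessary.
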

\begin{proof}
It is routine to check that any precritical leaf of $c$ also ``occurs'' in the dynamics (or the lamination) of $\tilde c$: the major leaf (or leaves) in $L_{\tilde c}$ separate the two major leaves in $L_c$. Precritical leaves in $L_{c}$ are preimages of the pair of major leaves (the preimages of this pair are always ``parallel'', i.e.\ not separated by the critical value, because otherwise the forward orbit of the critical value would have to intersect the domain bounded by the two major leaves). Each pair of preimages surrounds one preimage of the major leaf of $\tilde c$ (or a pair of preimages of the major leaves), and when such a preimage separates the $\alpha$ gap from its negative in $L_c$, then it also does so for $L_{\tilde c}$. Therefore, $N_{\tilde c}(n)\ge N_c(n)$ and $\tilde h(\tilde c)\ge \tilde h(c)$. 

The statement about characteristic leaves (or characteristic ray pairs) is well known and follows from the Correspondence Theorem~\ref{thm:Corresp}.
\end{proof}

For the record, we define a \emph{dyadic parameter} as a parameter $c$ that is the landing point of a parameter ray at a dyadic angle $a/2^m$; in this case, we call $m$ the \emph{generation} of $c$.




\section{Topological Surgery on dyadic Hubbard trees}
\label{Sec:TopSurgery}

\emph{Combinatorial arcs and veins}.
Conjecturally, the Mandelbrot set is path connected: every $c\in\M$ has an arc $[0,c]$ that connects it to the origin. Such arcs are unique when requiring that they traverse hyperbolic components only along internal rays (radial curves with respect to the parameterization by the multiplier map). In fact, for many parameters $c\in\M$ one can prove that such an arc actually exists (by work of Jeremy Kahn using Yoccoz' puzzle results, and by Johannes Riedl using quasiconformal surgery). However, we only need a combinatorial version of such arcs. One way of defining them is as follows: the abstract Mandelbrot set $\M_{abs}$ (as defined in Section~\ref{Sec:Definitions}) is well known to be locally connected and hence path connected, and it comes with a continuous projection $\pi\colon\M\to\M_{abs}$. For $c\in\M$, there exists a preferred path $\Gamma(c)\subset\M_{abs}$ connecting $\pi(0)$ to $\pi(c)$, and then we define $[0,c]:=\pi^{-1}(\Gamma(c))$. It is not hard to describe this combinatorial arc $[0,c]$ in terms of internal rays of hyperbolic components and of fibers that separate $0$ from $c$; the details are somewhat tedious but not very enlightening.

Specifically if $c$ is a dyadic parameter in $\M$, we call the combinatorial arc $[0,c]$ the \emph{long vein} of $c$. 
The \emph{vein} of $c$ is the shortest closed sub-arc of the long vein connecting $c$ to the union of the long veins of all dyadic parameters of lower generation than $c$.

 If $c$ and $c'$ are two dyadic parameters, then it is well known that their long veins intersect in an arc $[0,x]$, where $x$ is postcritically finite; this result is known as the ``branch theorem'' of $\M$ \cite{Orsay,MandelBranch}. Moreover, in the special case that $c$ and $c'$ are dyadic of equal generation, then $x$ is on the vein of a dyadic parameter $c''$ of lower generation. This means that veins (minus endpoints) are disjoint.

\begin{definition}[Directly Subordinate Parameter $c\lhd c'$]
\label{Def:DirectlySubordinate} \lineclear
We say that $ c$ is \emph{directly subordinate} to $c'$ and write $c\lhd c'$ if the vein of $c$ terminates at an interior point of the vein of $c'$; in addition, any dyadic parameter whose vein terminates at $0$ is declared to be subordinate to $c(1/2)=-2$.
\end{definition}
If $c\lhd c'$, then necessarily the external angle of $c'$ has lower denominator than that of $c$.  
Note that this is not a transitive relation and thus not a partial order. A few directly subordinate dyadic parameters are illustrated in Figure~\ref{Fig:DirectlySubordinate}.

\begin{figure}[htbp]
\framebox{\includegraphics{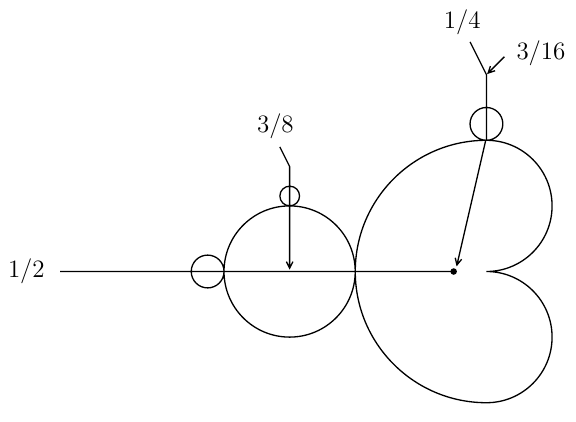}}
\caption{Illustration of directly subordinate parameters: we have $c(3/16)\lhd c(1/4)$, $c(1/4)\lhd c(1/2)$, and $c(3/8)\lhd c(1/2)$. Arrows indicate where the vein to some dyadic parameter terminates at the vein of the dyadic parameter to which it is directly subordinate.}
\label{Fig:DirectlySubordinate}
\end{figure}

\begin{theorem}[Entropy Between Dyadic Hubbard Trees] 
\label{Thm:RelationDyadicTrees} \lineclear 
If $c\lhd c'$ are two dyadic parameters, then 
\begin{itemize}
\item[a)]
$N(n)\le N'(n)$ for all $n$, 
\item[b)]  
$N(n)<N'(n)$ for all sufficiently large $n$, and
\item[c)]
 $\tilde h(c)<\tilde h(c')$. 
\end{itemize}
\end{theorem}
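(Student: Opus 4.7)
The plan is to exploit the topological surgery that gives this section its title: since $c\lhd c'$ are both dyadic, the vein of $c$ attaches to an interior point $p$ of the vein of $c'$, and $p$ is itself a postcritically finite parameter whose orbit portrait encodes the branching. Both laminations $L_c$ and $L_{c'}$ have degenerate minor leaves (at the dyadic angles), and the critical chord of $L_c$ arises from that of $L_{c'}$ by a controlled modification reflecting this attachment. For part (a), I would first establish $N(n)\le N'(n)$ by constructing, for each relevant precritical leaf $\ell$ of generation $n$ in $L_c$, a ``shadow'' relevant precritical leaf $\ell'$ of generation at most $n$ in $L_{c'}$, and verifying that the assignment $\ell\mapsto\ell'$ is injective. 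This is in the spirit of Lemma~\ref{Lem:Monotonicity}, adapted to the dyadic-dyadic situation in which both minor leaves are degenerate and so the lemma does not literally apply.

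For the strict inequality $N(n)<N'(n)$ at all sufficiently large $n$, I would exhibit a family of relevant precritical leaves of $L_{c'}$ supported on the ``side branch'' collapsed when passing from $L_{c'}$ to $L_c$: these leaves sit in the wake at $p$ that disappears in the surgery and hence are not in the image of the injection above. Because $p$ has a well-defined orbit portrait, these extra leaves can be enumerated explicitly, and their generations form an arithmetic progression, so for every sufficiently large $n$ at least one such extra leaf of generation at most $n$ appears.

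For part (b), the weak inequality $\tilde h(c)\le\tilde h(c')$ is immediate from part (a). For the strict inequality I would pass to the transition matrices on the (finite) dyadic Hubbard trees: let $M_c$ and $M_{c'}$ be the edge-incidence matrices, so that $e^{\tilde h(c)}$ and $e^{\tilde h(c')}$ are their Perron eigenvalues. The surgery relating the two trees should identify $M_c$ with a ``contracted'' version of $M_{c'}$ in which the block corresponding to the side branch at $p$ is removed or simplified. A strict Perron--Frobenius comparison applied to the dominant irreducible block of each matrix then yields the desired strict inequality of spectral radii, with any renormalizable situation treated by first restricting to a non-renormalizable irreducible component where strict monotonicity is available.

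The main obstacle is bridging from the strict inequality of leaf counts (part a) to strict inequality of growth exponents (part b): an additive gap in $n$ alone does not affect an exponential growth rate. Closing this gap requires a quantitative version of the surgery showing that the contribution of the side branch is genuinely exponential and not merely a bounded additive correction. I expect this is precisely where the topological surgery machinery of this section does its essential work, combined with the strict Perron--Frobenius inequality applied to the relevant irreducible block of the transition matrix.
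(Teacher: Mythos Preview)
Your overall architecture for part (b) is essentially what the paper does: once a generation-preserving injection is in hand whose image avoids a fixed subinterval $J$ with $p_{c'}^{\circ k}(J)=[-\alpha,\alpha]$, one compares the full transition automaton $A'$ of the dyadic Hubbard tree $H_{c'}$ with the sub-automaton $A'_*$ obtained by deleting the states inside $J$; irreducibility of $A'$ then forces the spectral radius to drop strictly. So on that point you are aligned with the paper. (One small correction: dyadic parameters are \emph{never} renormalizable---the critical orbit terminates at $\beta$---so the associated subshift is always irreducible and your caveat about ``any renormalizable situation'' is unnecessary and slightly misleading.)

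The genuine gap is in part (a), and it is exactly where you gesture toward Lemma~\ref{Lem:Monotonicity}. That lemma is about $c\prec c'$, but for $c\lhd c'$ the two parameters lie in \emph{different} sublimbs of the branch point $c_*$; neither separates the other from the origin, and no simple ``shadowing'' of leaves survives. The paper's injection is not an adaptation of monotonicity at all. Its construction has three substantial steps: (i) a topological surgery (Proposition~\ref{Prop:RelationDyadicTrees}) that builds, inside the dynamical plane of $p_c$, a tree $H'$ with a modified map $f$ and proves that $(H',f)$ \emph{is} the Hubbard tree of $p_{c'}$; (ii) a lemma (Lemma~\ref{Lem:HomeomorphicPreimage}) producing a homeomorphism $\eta\colon[c,x]\to[z',y']$ that intertwines $p_c^{\circ k}$ with $f^{\circ k}$; and (iii) an inductive construction (Proposition~\ref{Prop:InjectionPrecritical}) that uses $\eta$ to transport precritical points of $p_c$ on $[c,-\alpha]$ to precritical points of $f$ on $[z',-\alpha]$, one passage through $[c,x]$ at a time. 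The resulting map $B$ is \emph{generation-preserving} (not ``generation at most $n$'', which would not yield $N(n)\le N'(n)$), restricts to $[\alpha,-\alpha]$ by an explicit compatibility with dynamical counterparts, and by construction its image avoids the interval $[y',x]\supset J$. This last property is what drives both the strict inequality in (a) for every large $n$ (not merely along an arithmetic progression) and the strict Perron--Frobenius comparison in (b). Your proposal does not supply any of (i)--(iii), and without the surgery there is no evident way to compare precritical points of two polynomials sitting in incomparable sublimbs.
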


Here $N(n)$ and $N'(n)$ denote the numbers of relevant precritical points of generation $n$ for $p_c$ and for $p_{c'}$. This immediately implies a weak version of the Tiozzo conjecture:

\begin{corollary}[Dyadic Version of Tiozzo Conjecture] 
\label{Cor:DyadicTiozzoConjecture}\lineclear
Every dyadic angle $\theta$ has a neighborhood on which $h$, restricted to dyadic angles, assumes its unique maximum at $\theta$.
\end{corollary}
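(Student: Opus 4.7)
The strategy is to iterate Theorem~\ref{Thm:RelationDyadicTrees}(b) along chains in the tree of directly subordinate dyadic parameters. The relation $\lhd$ organizes the dyadic parameters into a rooted tree with root $c(1/2)=-2$: every dyadic $c'\neq c(1/2)$ has a unique parent, namely the dyadic of lower generation on whose vein the vein of $c'$ terminates (Definition~\ref{Def:DirectlySubordinate}). Writing $c'\preceq c$ to mean that $c$ lies on the unique tree-path from $c'$ to the root, iteration of Theorem~\ref{Thm:RelationDyadicTrees}(b) along this path yields $\tilde h(c')<\tilde h(c)$ whenever $c'\preceq c$ and $c'\neq c$, and hence $h(\theta')<h(\theta)$ for the corresponding dyadic angles.

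Fix a dyadic angle $\theta$ and set $c:=c(\theta)$. It then suffices to exhibit a neighborhood $U\ni\theta$ in $\Circle$ such that $c(\theta')\preceq c$ for every dyadic $\theta'\in U$. I would argue contrapositively: if $c':=c(\theta')\not\preceq c$, then $c'$ lies outside the subtree of $c$, so the tree-paths from $c$ and from $c'$ to the root diverge at some strict ancestor of $c$. By the branch theorem of $\M$ together with Milnor's orbit portraits, this divergence occurs at a hyperbolic component whose wake is cut out by a periodic parameter ray pair $RP(\phi^-,\phi^+)$, with $c$ and $c'$ lying in different sub-wakes. Because $\theta$ is dyadic and hence not the angle of any periodic ray, $\theta\notin\{\phi^-,\phi^+\}$. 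Choosing $U$ to be the open arc around $\theta$ contained between the two closest such $\phi^{\pm}$ flanking $\theta$ on each side then forces every dyadic $\theta'\in U\setminus\{\theta\}$ to satisfy $c(\theta')\preceq c$, completing the argument via the first paragraph.

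The main obstacle is verifying that the arc $U$ is nondegenerate, i.e.\ that the endpoints of periodic ray pairs separating $c$ from some non-descendant dyadic do not accumulate on $\theta$. The essential point is that only finitely many rational rays land at $c$ itself (a single dyadic ray, since $c$ is of Misiurewicz type), so any periodic ray pair lands at a parameter distinct from $c$; combined with the monotonicity statement of Lemma~\ref{Lem:Monotonicity} and the local structure of $\M$ as a union of sub-wakes glued along veins, this forces separating periodic ray pairs with angles converging to $\theta$ to correspond to a nested sequence of hyperbolic components collapsing to $c$'s vein, which by the combinatorial structure of wakes would place their separated non-descendants inside the subtree of $c$ after all — a contradiction. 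Once this combinatorial fact is in hand, the chain argument via Theorem~\ref{Thm:RelationDyadicTrees}(b) immediately yields the corollary.
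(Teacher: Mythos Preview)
Your first paragraph captures exactly the mechanism the paper uses: iterate Theorem~\ref{Thm:RelationDyadicTrees}(b) along chains in the $\lhd$-tree. The paper states the corollary as an immediate consequence of Theorem~\ref{Thm:RelationDyadicTrees} and carries out essentially this chain argument later (in the proof of Corollary~\ref{Cor:TiozzoConj}).

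Where your argument diverges, and where the gaps appear, is in the construction of the neighborhood $U$. The paper does this directly rather than contrapositively: let $c_*$ be the postcritically finite parameter at which the vein of $c(\theta)$ terminates, and take $U=I(\theta)$ to be the open interval of angles in the subwake of $c_*$ containing $\theta$. This is automatically a nondegenerate open arc, so no non-accumulation argument is needed. One then checks (essentially by the definition of the vein together with Lemma~\ref{Lem:DirectlySubordinateDynamics}) that $c(\theta)$ is the dyadic of least generation in this subwake, and inductively that every other dyadic angle in $I(\theta)$ is directly or indirectly subordinate to $\theta$. That is the entire construction.

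Your indirect route has two concrete problems. First, the assertion that the divergence of tree-paths corresponds to a \emph{hyperbolic component} with a \emph{periodic} ray pair is not justified: by the Branch Theorem the relevant branch point $c_*$ may equally well be a Misiurewicz--Thurston parameter, in which case the separating rays are preperiodic, and you would need to handle both cases. Second, and more seriously, your third paragraph does not actually establish non-accumulation. You assert that ray pairs with angles converging to $\theta$ would force the separated non-descendants ``inside the subtree of $c$ after all'', but no mechanism is given for this: a nested sequence of wakes shrinking to $c$ tells you nothing a priori about where the \emph{other} subwake (containing the non-descendant $c'$) lies. The correct and much shorter observation is simply that one specific subwake of one specific $c_*$ already does the job, making the whole accumulation discussion unnecessary.
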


From here, we could use continuity of $h$ to prove the unrestricted version of the Tiozzo conjecture (which indeed was our strategy in an early version of the proof). We will argue the other way around: it is much easier to deduce the conjecture directly and use this in the proof of continuity.

Let us state another corollary to the Correspondence Theorem~\ref{thm:Corresp}: it is a more precise version than monotonicity of entropy (as stated in Lemma~\ref{Lem:Monotonicity}) because even the lamination is monotone.


\begin{corollary}[Monotonicity of Rational Lamination on Hubbard Tree]
\label{Cor:CountPartsHubTree}
Let $c,c'\in\M$ be two postcritically finite parameters such that $c\prec c'$. Suppose that in the dynamical plane of $p_c$ two periodic or preperiodic rays $R(\phi^-)$ and $R(\phi^+)$ land together at some point in the Hubbard tree of $p_c$, but not on the backwards orbit of the critical value. Then in the dynamical plane of $c'$ the dynamic rays $R(\phi^-)$ and $R(\phi^+)$ also land together and the landing point is in the Hubbard tree of $c'$.
\end{corollary}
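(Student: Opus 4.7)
The plan is to split the claim into two assertions: (a) the rays $R(\phi^-)$ and $R(\phi^+)$ land together at some common point $y'$ in the dynamical plane of $p_{c'}$, and (b) this point $y'$ lies on the Hubbard tree of $p_{c'}$, which I denote $T_{c'}$.

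For (a), let $\mathcal{O}$ denote the orbit portrait of the periodic cycle that the forward orbit of $x$ eventually reaches in $p_c$, and let $\{\psi^-,\psi^+\}$ be its characteristic leaf. This leaf lies in $L_c$, hence by Lemma~\ref{Lem:Monotonicity} it also lies in $L_{c'}$. Standard orbit-portrait theory (cf.\ also Theorem~\ref{thm:Corresp}) then implies that the entire portrait $\mathcal{O}$ is realised in $p_{c'}$: its periodic rays land pairwise at the corresponding cycle, and by pulling back through the dynamics the preperiodic ray pairings in $\mathcal{O}$ (including $\{\phi^-,\phi^+\}$ if $x$ is preperiodic) persist as well. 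Thus $R(\phi^\pm)$ land together at some point $y'$ in $p_{c'}$.

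For (b), I would treat the periodic and preperiodic cases separately. If $x$ is periodic, the common landing point $y_0'$ of the characteristic rays $R(\psi^\pm)$ lies on the regulated arc $[0,c']\subset T_{c'}$ by definition of the characteristic ray pair, and forward-invariance of $T_{c'}$ gives $y'\in T_{c'}$ (since $y'$ belongs to the forward orbit of $y_0'$). If $x$ is preperiodic, let $n$ be minimal with $x_n:=p_c^{\circ n}(x)$ periodic, and set $y_i':=p_{c'}^{\circ i}(y')$; the periodic case handles $y_n'$, and I would argue by downward induction on $i\in\{0,\ldots,n\}$. In the inductive step, the hypothesis that $x$ avoids the backward orbit of $c$ ensures $x_i\ne 0$, so $y_{i+1}'$ has two distinct preimages under $p_{c'}$, and the angles $2^i\phi^\pm$ landing at $y_i'$ single out one of them. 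Using the nestedness of $L_{c'}$'s minor leaf inside that of $L_c$ (the key fact established in the proof of Lemma~\ref{Lem:Monotonicity}) together with forward invariance of sectors under doubling, one verifies that this selected preimage is the branch compatible with $x_i\in T_c$, whence $y_i'\in T_{c'}$.

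The principal obstacle is precisely this last inductive step in the preperiodic case. One must carefully check, angle-by-angle along the iteration $y_n',y_{n-1}',\ldots,y_0'=y'$, that each pullback lift remains on $T_{c'}$ rather than jumping to its reflected preimage $-y_i'$. This is exactly where the monotonicity of critical sectors provided by Lemma~\ref{Lem:Monotonicity} is needed in its strongest form: without compatibility of the two critical diameters, the pullback could combinatorially drift off the Hubbard tree, and the conclusion would fail.
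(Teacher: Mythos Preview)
Your approach is essentially the paper's, only more explicit. The paper likewise passes to the characteristic point $x'$ on the orbit of $x$, invokes the Correspondence Theorem (Theorem~\ref{thm:Corresp}) to get persistence of the characteristic ray pair throughout the wake containing $c'$, and then observes that all pull-back ray pairs sitting on the $0$-side of $RP(\tilde\phi^-,\tilde\phi^+)$ (in particular the pair $RP(\phi^-,\phi^+)$ at $x$) persist as well. That is your part~(a), phrased via orbit portraits rather than wakes, but with the same content.

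For part~(b) the paper writes a single sentence: ``The landing point is clearly in the Hubbard tree of $p_{c'}$.'' So the inductive pull-back step you single out as the principal obstacle is precisely what the paper suppresses. Your treatment of the periodic case is fine and matches the intended reasoning (characteristic landing point on $[0,c']$, then forward invariance). For the preperiodic step your instinct is correct, and the argument does go through; one clean way to phrase it is via itineraries rather than by chasing both preimages. The nesting of major leaves (from the proof of Lemma~\ref{Lem:Monotonicity}) guarantees that the partition of the circle by the critical diameter of $L_{c'}$ refines that of $L_c$ on the relevant angles, so the itinerary of $y'_i$ under $p_{c'}$ agrees entry-by-entry with the itinerary of $x_i$ under $p_c$ (here one uses that the orbit of $x$ never meets the critical leaf, which is exactly the hypothesis that $x$ avoids the backward orbit of the critical value). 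Since $x_i\in T_c$ and the point of $T_{c'}$ with that itinerary is unique, it must be $y'_i$. This replaces the somewhat delicate ``select the correct preimage'' step with a direct identification.

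In short: no genuine gap relative to the paper --- you have simply unpacked what the paper left implicit, and correctly located where the work lies.
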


\begin{proof}
In the dynamic plane of $p_c$, let $x$ be the landing point of the dynamic ray pair $RP_c(\phi^-,\phi^+)$;  it is by hypothesis in the Hubbard tree of $c$ and it must be a repelling periodic or preperiodic point because $p_c$ is postcritically finite.

Let $W\subset\C$ be the set of parameters $c''$ for which the dynamic rays $R_{c''}(\phi^-)$ and $R_{c''}(\phi^+)$ land together at a repelling periodic or preperiodic point that is not precritical. By Theorem~\ref{thm:Corresp}, the set $W$ is open, and its boundary in $\C$ consists of parameters $c''\in\C\sm\M$ where the critical value is on the forward orbit of one of the rays $R_{c''}(\phi^-)$ and $R_{c''}(\phi^+)$, as well as of parameters $c''\in\M$ where at least one of the two rays lands at a parabolic orbit or at a precritical point. By hypothesis, the set $W$ contains $c$ and is thus non-empty, and any parameter $c''\in\C\sm\ovl W$ must be separated from $c$ by a parameter ray pair with angles on the forward orbit of $\phi^-$ or $\phi^+$. 

However, in the dynamical plane of $c$, all dynamic rays at such angles land at the Hubbard tree, and their configuration shows that there is no parameter ray pair available that, for $c'\succ c$, could bound $c'$ away from $c$. Thus $c'\in W$. 
\end{proof}


\begin{definition}[Dynamical Counterpart to Parameter]
\label{Def:DynamCounterpart} \lineclear
Let $c$ be a postcritically finite parameter and suppose $c'\succ c$. Then a (pre)periodic point $x$ in the dynamical plane of $p_{c'}$ is called the \emph{dynamical counterpart to $c$} in the following case:
\begin{itemize}
\item
if $c$ is preperiodic, then $x$ is the landing point of the preperiodic dynamic rays at the same angles as $c$;
\item
if $c$ is periodic, then $x$ is the landing point of the periodic dynamic rays bounding (in the parameter plane) the subwake of $c$ containing $c'$.
\end{itemize} 
\end{definition}

In the periodic case, $c$ is the center of a hyperbolic component, say $H_c$, and the subwake of $c$ containing $c'$ is bounded by a periodic parameter ray pair landing at $\partial H_c$. The angles of this ray pair are the angles of two rays landing at $x$. For example, the $\alpha$ fixed point is the dynamical counterpart of $c=0$. In the preperiodic case, it is known that all dynamic rays with the same angles of the rays landing at $c$ also land together in the dynamical plane of $c'$.

An equivalent definition is that $x$ is the unique repelling periodic or preperiodic point in the dynamical plane of $c'$ such that the itinerary of $x$ (with respect to the critical point) equals the (upper) kneading sequence of $c$.

\begin{lemma}[Dynamical Counterpart is Characteristic]
\label{Lem:DynCounterpartCharacteristic} \lineclear
Every periodic or preperiodic point that is a dynamical counterpart is characteristic. 
\end{lemma}
\begin{proof}
If, in the dynamical plane of $p_{c'}$, the point $x$ is the dynamical counterpart of a parameter $c$, then $c'\succ c$, and $x$ is the landing point of at least two dynamic rays. If $p_c$ is critically strictly preperiodic, then the parameter rays landing at $c$ bound the wake that $c'$ is in, and this implies that $x$ is a characteristic preperiodic point. If $p_c$ is critically periodic, then the argument is similar, except that the parameter rays do not land at $c$ but at the root of the hyperbolic component containing $c$.
\end{proof}


\begin{lemma}[Directly Subordinate Dyadics]
\label{Lem:DirectlySubordinateDynamics} \lineclear
\looseness-1
If $c$ and $c'$ are two dyadic parameters, then $c\lhd c'$ if and only if there is a postcritically finite parameter $c_*\in\M$ so that $c'$ is the dyadic of least generation within any sublimb of $c_*$, and  $c$ is the dyadic of least generation within a different sublimb of $c_*$ than $c'$.

In this case, denoting the external angles of $c$ and $c'$ by $\theta$ and $\theta'$, respectively, then in the dynamics of $c$ (or any other parameter in the same sublimb of $c_*$) there is a repelling (pre)periodic point $x_*$ that is the landing point of at least three dynamic rays  that separate the dynamic rays at angles $0$, $\theta$, and $\theta'$. The point $x_*$ is the dynamical counterpart to $c_*$. 
\end{lemma}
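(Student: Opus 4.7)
The plan is to apply the Branch Theorem of Douady--Hubbard to identify $c_*$ as the postcritically finite point where the long veins of $c$ and $c'$ meet, and then to transfer the parameter-plane structure to the dynamical plane via the Correspondence Theorem (Theorem~\ref{thm:Corresp}) and Definition~\ref{Def:DynamCounterpart}.

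For the forward direction, I assume $c \lhd c'$; by definition the vein of $c$ terminates at an interior point of the vein of $c'$, and I call this point $c_*$. The Branch Theorem gives that $c_*$ is postcritically finite. Interiority of $c_*$ in the vein of $c'$ forces that vein to extend past $c_*$ toward $c'$, placing $c'$ in one sublimb of $c_*$; the vein of $c$ enters $c_*$ from a different sublimb, for otherwise the two long veins would coincide near $c_*$ and the vein of $c$ would not terminate there. For the least-generation property, I argue by contradiction: if some dyadic $\tilde c$ with $\mathrm{gen}(\tilde c) < \mathrm{gen}(c')$ lay in a sublimb of $c_*$, the long veins of $\tilde c$ and $c'$ would meet at a PCF point $y$. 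In the case $\tilde c$ shares the sublimb of $c'$, the point $y$ lies strictly between $c'$ and $c_*$, so the vein of $c'$ terminates at $y$ before reaching $c_*$; in the case $\tilde c$ is in a different sublimb, $y = c_*$, so the vein of $c'$ ends at $c_*$. Both possibilities contradict interiority of $c_*$ in the vein of $c'$.

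For the converse direction, given $c_*$ with the stated properties, I show that the vein of $c$ terminates at $c_*$ and that $c_*$ is interior to the vein of $c'$. Both long veins pass through $c_*$ since $c_*$ separates each of $c, c'$ from $0$, and they coincide from $c_*$ toward $0$. The critical step is excluding premature termination of the vein of $c$ inside its own sublimb: any such termination point would, via the Branch Theorem, produce a finer PCF branch point $c_{**}$ inside that sublimb in which a dyadic of generation strictly less than $\mathrm{gen}(c')$ would appear in one of the sublimbs of $c_{**}$, contradicting the assumed minimality of $c'$. Interiority of $c_*$ in the vein of $c'$ then follows from the presence of a dyadic of generation less than $\mathrm{gen}(c')$ on the $0$-side of $c_*$ (for instance $c(1/2) = -2$), with the boundary case $c_* = 0$ handled by the special clause in Definition~\ref{Def:DirectlySubordinate}.

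For the dynamical statement, $c_*$ is a PCF branch point of $\M$, so at least three sectors meet at $c_*$---the sublimbs of $c$ and of $c'$ together with the $0$-containing sector---and accordingly at least three parameter rays either land at $c_*$ (in the Misiurewicz case) or bound these sublimbs at the root of its hyperbolic component (in the periodic case). Definition~\ref{Def:DynamCounterpart} supplies the dynamical counterpart $x_*$ as a repelling (pre)periodic point in the dynamical plane of $c$, and the Correspondence Theorem transfers the parameter-plane ray portrait at $c_*$ to a dynamical ray portrait at $x_*$; the three sectors at $c_*$ become three sectors at $x_*$ containing $R(0)$, $R(\theta)$, and $R(\theta')$ respectively. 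The principal obstacle throughout is the converse direction---specifically, the argument excluding premature termination of the vein of $c$ inside its sublimb, which I expect to require an infinite-descent application of the Branch Theorem, invoking the minimality of $c'$ at successively finer branch points.
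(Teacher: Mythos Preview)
Your overall strategy matches the paper's: fix $c_*$ as the Branch-Theorem branch point of $c$ and $c'$, and show the equivalence reduces to whether $c'$ is the least-generation dyadic in the sublimbs of $c_*$. For the forward direction the paper takes a slightly slicker route than your case split: it introduces $c_0$, the least-generation dyadic in any sublimb of $c_*$, observes that $c_*$ lies in the interior of \emph{both} the vein of $c_0$ and the vein of $c'$, and then invokes the fact that two distinct dyadic veins can share at most one point to force $c'=c_0$. This one-line argument replaces your two-case contradiction and is worth knowing. Your treatment of the dynamical part via the Correspondence Theorem and Definition~\ref{Def:DynamCounterpart} is essentially the paper's argument.

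The genuine gap is in your converse direction, exactly where you flagged the ``principal obstacle''. Your claim that a premature termination point $c_{**}$ inside the sublimb of $c$ would force a dyadic of generation strictly less than $\mathrm{gen}(c')$ to appear in a sublimb of $c_{**}$ is not correct: premature termination only produces a dyadic $\tilde c$ with $\mathrm{gen}(\tilde c)<\mathrm{gen}(c)$, and one can easily have $\mathrm{gen}(c')\le\mathrm{gen}(\tilde c)<\mathrm{gen}(c)$ with $\tilde c$ sitting in the same sublimb of $c_*$ as $c$. In that situation the vein of $c$ terminates at the branch point of $c$ and $\tilde c$, which lies in the sublimb of $c$ and hence off the vein of $c'$ entirely, so $c\lhd\tilde c$ rather than $c\lhd c'$. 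No infinite-descent refinement will rescue this, because the minimality of $c'$ at $c_*$ gives no control over intermediate-generation dyadics inside the sublimb of $c$. The paper's own proof asserts ``the vein of $c$ terminates at $c_*$'' at precisely this point without justification, so the gap is shared; fortunately only the forward implication and the dynamical statement are used downstream.
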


\begin{proof}
Any two dyadic parameters are endpoints of $\M$, so by the Branch Theorem of the Mandelbrot set there is a unique postcritically finite parameter $c_*$ that contains $c$ and $c'$ in two different of its sublimbs. Let $c_0$ be the unique dyadic of least generation in any of the sublimbs of $c_*$; then $c_*$ is on the long vein of all three of $c_0$, $c$, and $c'$, and it is on the vein of $c_0$. 

The assumption that $c\lhd c'$ means that the vein of $c$ terminates at an interior point of the vein of $c'$, and hence it must terminate at the parameter $c_*$, so $c_*$ is an interior point of the vein of $c'$. Since $c_*$ is also an interior point of the vein of $c_0$, it follows that $c'=c_0$ (two veins can never have more than one point in common). 

Conversely, if $c'$ is the dyadic of least generation in the sublimb of $c_*$    and  $c$ is the dyadic of least generation within a different sublimb of $c_*$ than $c'$, then $c_*$ is in the interior of the vein of $c'$ and the vein of $c$ terminates at $c_*$. This proves the first claim.

For the second claim, we first consider the case that $c_*$ is a Mi\-siu\-re\-wicz-Thurston parameter; it is then the landing point of $s\ge 3$ rational parameter rays, say at angles $\theta_1,\dots,\theta_s$, so that the parameter rays at angles $0$, $\theta$, and $\theta'$ are in different sectors with respect to these parameter rays. Every parameter in any sublimb of $c_*$ has the property that the dynamic rays at angles $\theta_1,\dots,\theta_s$ land together at a repelling preperiodic point, and the claim follows.

If $c_*$ is the center of a hyperbolic component, then the parameter $c$ is in a sublimb at internal angle $p/q\neq 1/2$, and in the dynamical plane of $c$ (or any parameter within the same sublimb) there is a repelling periodic point that is the landing point of $q\ge 3$ dynamic rays that separates the angles $0$, $\theta$ and $\theta'$ so that $\theta$ is in the largest sector not containing the angle $0$. 
\end{proof}


\begin{lemma}[No Extra Branch Point]
\label{Lem:NoExtraBranchPoint} \lineclear
Suppose $c'\in\M$ is a dyadic parameter and $c\in\M$ is the postcritically finite parameter where the vein of $c'$ ends. Let $x$ be the dynamical counterpart of $c$ in $H'$. Then 
the arc $(x,c']\subset H'$ does not contain a branch point.
\end{lemma}
\begin{proof}
Suppose the claim is false and there is a branch point on $(x,c']$. Then the sub-wake of $x$ containing $c'$ also contains a point, say $c''$, in the forward orbit of $c'$ (all endpoints of $H'$ are on the orbit of $c'$). But then $c''$ is the landing point of a dyadic ray of lower generation than the dyadic ray landing at $c'$. This is impossible because in the parameter plane the dyadic ray landing at $c'$ has the lowest possible generation among all dyadic rays in the sub-wake of $c$ containing $c'$ (by hypothesis that the vein of $c'$ terminates at $c$).  
\end{proof}

The main step in proving Theorem~\ref{Thm:RelationDyadicTrees} is a topological surgery on Hubbard trees, as follows: 


\begin{proposition}[Relation Between Subordinate Dyadic Hubbard Trees]
\label{Prop:RelationDyadicTrees} 
Let $c\lhd c'$ be two dyadic parameters with external angles $\theta$ and $\theta'$, let $H_c$ be the Hubbard tree of $c$, and let $H \supset H_c$ be the connected hull of the critical orbit and of the orbit of $z(\theta')$. Let $x$ be the branch point of the arcs from $0$ to $c$ and to $z(\theta')$. If $p_c$ is the natural map on $H$, define a map $f\colon H\to H$ as follows: choose a homeomorphism $\rho\colon[x,c]\to[x,z(\theta')]$ fixing $x$ and let 
\[
f(z):=\left\{ \begin{array}{ll} \rho\circ p_c(z) & \text{if $p_c(z)\in[x,c]$} \\ 
p_c(z) & \text{otherwise.} \end{array} \right.
\]
Let $H'$ be the connected hull within $H$ of the orbit of $0$ under $f$. Then $(H',f)$ is the Hubbard tree of $p_{c'}$ (up to isotopy rel branch points and endpoints).
\end{proposition}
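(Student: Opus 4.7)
The plan is to identify the surgered system $(H',f)$ with the Hubbard tree $(H_{c'},p_{c'})$ by matching combinatorial data and invoking uniqueness of Hubbard trees for postcritically finite quadratic polynomials. The design of $f$ is motivated by the goal of replacing the critical value $c$ of $p_c$ by $z(\theta')$: anything whose $p_c$-image lands on $[x,c]$ is rerouted by $\rho$ to $[x,z(\theta')]$, while the rest of the dynamics is unchanged. First I would check that $f$ is well-defined and continuous (the two clauses of the piecewise definition agree on $p_c^{-1}(x)$ because $\rho$ fixes $x$), and that $f(0)=\rho(c)=z(\theta')$, so the new critical value is $z(\theta')$.

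Next I would trace the $f$-orbit of $0$ and show it is a finite set realizing the critical orbit combinatorics of $p_{c'}$. The key combinatorial input is Lemma~\ref{Lem:DirectlySubordinateDynamics}: the branch point $x$ is the dynamical counterpart of a postcritically finite parameter $c_*$ that separates $c$ from $c'$ into different sublimbs, with $c'$ the dyadic of smallest generation in its sublimb. The rational rays landing at $x$ partition $\mathbb S$ into angular sectors indexed by sublimbs of $c_*$, and by Theorem~\ref{thm:Corresp} these same rays also land at the dynamical counterpart $x'$ in the plane of $p_{c'}$. The claim I would verify is that for $k\geq 1$ the point $z(2^k\theta')$ lies outside the arc $(x,c]$ of $H$, so $f$ agrees with $p_c$ past the critical point; this is where the minimality of $c'$'s generation is used, since if some $2^k\theta'$ fell in the $c$-sector at $x$, pulling back under doubling into the $c'$-sector would exhibit a dyadic in the $c'$-sublimb of $c_*$ of strictly smaller generation than $\theta'$, contradicting the definition of $\lhd$. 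Granted the avoidance, the $f$-orbit of $0$ is the finite set $\{0,z(\theta'),z(2\theta'),\dots,z(1/2),\beta\}$.

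The connected hull $H'\subset H$ of this orbit is then a finite subtree carrying a degree-two branched self-cover $f|_{H'}$ with postcritical set equal to that orbit. Its combinatorial data --- the itinerary of the critical orbit and the local fan structure at each branch point --- match those of $H_{c'}$: the itinerary is determined by the sequence of sectors visited by the doubling orbit of $\theta'$, which is the same partition (by the shared rays at $x$ and $x'$) in both dynamical planes; and the fan structure at branch points on the critical orbit is inherited from $p_c$ and, via Theorem~\ref{thm:Corresp}, realized by the same rational ray pairs in $p_{c'}$'s dynamics, since the relevant parameter ray pairs lie in the common sub-wake of $c$ and $c'$. By the standard uniqueness of Hubbard trees of postcritically finite polynomials from their combinatorial data, this yields an equivariant homeomorphism identifying $(H',f)$ with $(H_{c'},p_{c'})$. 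The principal obstacle is the angular avoidance claim of the second paragraph --- the combinatorial crux of the surgery, where the full strength of $c\lhd c'$ enters; once that is in place the remaining identification is structural.
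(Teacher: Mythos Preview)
Your orbit computation is correct, and it matches what the paper uses: $f(0)=z(\theta')$, and then $f^{k}(0)=z(2^{k-1}\theta')$ until the orbit reaches $\beta$. However, your justification of the avoidance claim is muddled --- the ``pulling back into the $c'$-sector'' step does not parse. The clean argument is simply that each $z(2^k\theta')$ for $k\ge 1$ is the landing point of a dyadic ray, hence an endpoint of the dendrite Julia set of $p_c$; an endpoint can lie on the arc $[x,c]$ only at an endpoint of that arc, and it equals neither $x$ (a branch point) nor $c=z(\theta)$ (whose angle has strictly larger generation). The paper uses this step too, though tersely (``the orbit of $0$ is still finite; it still terminates at the $\beta$ fixed point'').

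The real gap is your identification step. You want to match $(H',f)$ with $(H_{c'},p_{c'})$ by matching itineraries and fan structure, but your itinerary argument is wrong: the itinerary of the critical orbit in $(H',f)$ is taken with respect to the two components of $H'\setminus\{0\}$, which in the plane of $p_c$ are separated by the diameter at angles $\theta/2,(\theta+1)/2$, whereas the kneading sequence of $c'$ is computed with respect to the diameter at $\theta'/2,(\theta'+1)/2$. These are different partitions of the circle, and you give no reason why they agree on the finite set $\{2^k\theta'\}$; invoking ``sectors at $x$'' does not help, since itineraries are taken at $0$, not at $x$. So the combinatorial matching you propose is exactly the hard part, and your sketch does not address it.

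The paper avoids this entirely. It observes that $(H',f)$ is \emph{some} abstract dyadic Hubbard tree, say for $c''$ with angle $\theta''=a''/2^{k'}$ (the generation is read off from the length of the critical orbit you computed). Then it argues that the external angles at $x$ are preserved by the surgery (the subtree spanned by the orbit of $x$ is untouched), so in the plane of $c''$ the point corresponding to $x$ still carries the angles of $c_*$, and hence $\theta''$ lies in the wake of $c_*$. Since the dyadic of least generation in that wake is unique and equals $\theta'$, and $\theta''$ has that same generation $k'$, one gets $\theta''=\theta'$. This least-generation characterization is the efficient substitute for your combinatorial matching; compare also the Remark following the proof, which gives the spider/Thurston-rigidity version of the argument you seem to have in mind.
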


\begin{proof}
Since there are no branch points on $[x,c]$ (Lemma~\ref{Lem:NoExtraBranchPoint}), the new map $f$ is a branched covering where $0$ is the unique critical point.

We have a connected tree $H$ containing the critical point $0$, and with respect to $f$ the orbit of $0$ is still finite (it still terminates at the $\beta$ fixed point). Therefore, $(H',f)$ is a finite tree with a continuous self-map, and the dynamics is locally injective except at the critical point $0$. Since there are at most $2$ branches at $0$, the map is globally at most $2:1$. Every endpoint is by definition on the critical orbit, and the tree comes with an embedding into $\C$ that is compatible with the dynamics. Therefore $(H',f)$ is the Hubbard tree of a postcritically finite polynomial in which the critical orbit lands at the $\beta$ fixed point at the desired number of iterations.  Let $c''$ be the corresponding parameter and $\theta''$ be the external angle; we have $\theta''=a''/2^{k'}$.
It remains to prove that $\theta''=\theta'$ and thus $c''=c'$. 

Since $c\lhd c'$, there is a postcritically finite branch point in $\M$, say $c_*$, that separates $c$ from $c'$, and the external angles of $c_*$ are the external angles of $x$ in the dynamical plane of $c$ (Lemma~\ref{Lem:DirectlySubordinateDynamics}). In the dynamical plane of $c''$, the point $x$ has the same external angles because it has the same period and preperiod and the dynamics of the subtree connecting the orbit of $x$ is unaffected by the surgery (except the bit around the critical point that maps past $x$). Hence $\theta''$ is the unique dyadic of least generation that is separated from the angle $0$ by the angles of $x$, and the same is true for $\theta'$.
\end{proof}

\begin{remark}
The fact that $c''=c'$ can also be shown using spiders \cite{Spiders} and Thurston's theorem. Let us topologically extend the map $f:H' \to H'$ to a continuous map on $\C$ as follows. First, we set $f$ to be $p_c$ on the dynamic rays $R({2^{t}\theta'})$ of $p_c$, where $t\in\{0,1,\dots, k'\}$. There are $k'+1$ topological discs in the complement of $H'\cup_{t\ge 0} R({2^{t}\theta'})$ and the map $f$ easily extends to each of them as a homeomorphism. The new map $f$ is a topological polynomial for which $\bigcup_t R({2^{t}\theta'})$ forms an invariant spider. Since this spider is equivalent to a standard invariant spider of $c'$, we get $c'=c''$ by Thurston rigidity \cite{DHThurston,Spiders}.
\end{remark}


\begin{lemma}[Injective Dynamics of Last Edge]
\label{Lem:InjectiveDynamicsLastEdge} \lineclear
In any dyadic Julia set, consider any dyadic angle $\theta=a/2^k$ with $k\ge 1$ and let  $x$ be the point where the arc from $z(\theta)$ to $\alpha$ is attached to the minimal tree connecting all dyadic endpoints of generations $a'/2^{k'}$ with $k'<k$. Then $[z(\theta),x]$ maps injectively for $k$ iterations to an interval $[\beta,y]\subset[\beta,\alpha]$. 
\end{lemma}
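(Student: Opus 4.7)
The plan is to iterate $p_c$ on $[z(\theta), x]$ and prove injectivity at each of the $k$ steps. Set $\theta_i := 2^i \theta \bmod 1$, so $z(\theta_i) = p_c^i(z(\theta))$ is dyadic of generation $k - i$ (with $z(\theta_k) = \beta$). Let $T_m$ denote the minimal subtree of the filled Julia set $K_c$ containing all dyadic endpoints of generation $\leq m$. The key geometric fact is that for $m \geq 1$, the arc $[\beta, -\beta]$ lies in $T_m$, and in the dendrite $K_c$ this arc passes through the critical point $0$ (since $p_c$ is ramified at $0$ and $p_c(\beta) = p_c(-\beta) = \beta$).

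By the definition of $x$ as the attachment of $[z(\theta), \alpha]$ to $T_{k-1}$, the arc $[z(\theta), x]$ meets $T_{k-1}$ only at $x$; hence its interior is disjoint from $T_{k-1}$ and in particular from $0 \in T_{k-1}$ (for $k \geq 2$). So $p_c$ is injective on $[z(\theta), x]$ with image $[z(\theta_1), p_c(x)]$. I then proceed by induction on $k$: identify $p_c(x)$ with the analogous attachment $x_1$ of $[z(\theta_1), \alpha]$ to $T_{k-2}$, so that the inductive hypothesis applied to the dyadic angle $\theta_1$ of generation $k-1$ yields that $p_c^{k-1}$ maps $[z(\theta_1), x_1]$ injectively into $[\beta, y] \subset [\beta, \alpha]$, completing the induction. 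The base case $k = 1$ is handled directly: taking $x = \alpha$, the arc $[-\beta, \alpha]$ avoids $0$ because $-\beta$ and $0$ lie in distinct sectors of the branch point $\alpha$ (their external angles are $1/2$ and $\{\theta_c/2, (\theta_c+1)/2\}$ respectively, where $\theta_c$ is the critical-value angle, and these are separated by the rays landing at $\alpha$ in the $p/q$-limb structure), so $p_c$ maps $[-\beta, \alpha]$ injectively onto $[\beta, \alpha]$.

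The main obstacle is verifying $p_c(x) = x_1$. A priori, $p_c(x) \in p_c(T_{k-1})$, and this image can strictly contain $T_{k-2}$: the arc $[\beta, -\beta] \subset T_{k-1}$ folds at $0$ onto $[\beta, c]$, and $p_c(0) = c$ need not lie in $T_{k-2}$. To pin down $p_c(x) = x_1$, I would use an angular sector argument: the arc $[z(\theta), x]$ corresponds to an angular sector $I \ni \theta$ on the circle of external angles, bounded by the angles landing at $x$; under angle-doubling, $I$ maps to the sector $2I \ni \theta_1$ bounded by angles landing at $p_c(x)$. A combinatorial check then shows that these boundary angles are precisely those defining $x_1$ as the attachment to $T_{k-2}$ of the wake of that tree containing $z(\theta_1)$. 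This reduces to tracking how the angles landing at $x$ double into angles landing at $x_1$, which follows from the compatibility of the nested dyadic-tree structure with angle-doubling dynamics on the circle.
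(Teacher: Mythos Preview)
Your inductive framework matches the paper's: both define the trees $T_m$ and argue by descent on $k$. The divergence is in the inductive step. You aim to prove the \emph{equality} $p_c(x)=x_1$, and you identify this as the crux, proposing to verify it by an ``angular sector argument'' that you leave as a combinatorial check. This is where the gap lies. The equality $p_c(x)=x_1$ need not hold: the attachment point $x$ lies in $T_{k-1}$, but $p_c(T_{k-1})$ can strictly overshoot $T_{k-2}$ (as you yourself note via the fold at $0$), and nothing you have said rules out $p_c(x)\notin T_{k-2}$, in which case $p_c(x)$ is a genuine interior point of the arc $(z(2\theta),x_1)$ rather than $x_1$ itself. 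Your sketched angular argument (``boundary angles of the sector double to the boundary angles defining $x_1$'') presupposes exactly what is in doubt, namely that the image branch point sits on $T_{k-2}$.

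The paper sidesteps this entirely by proving a weaker containment that still closes the induction. The key observation is the preimage inclusion
\[
p_c^{-1}(T_{k-2})\subset T_{k-1},
\]
which follows in one line: every endpoint of the forest $p_c^{-1}(T_{k-2})$ must map to an endpoint of $T_{k-2}$, hence is a dyadic endpoint of generation at most $k-1$. From this, if $[z(2\theta),p_c(x)]$ met $T_{k-2}$ anywhere, then $[z(\theta),x]$ would meet $T_{k-1}$ at an interior point, contradicting the definition of $x$. Thus $[z(2\theta),p_c(x)]\subset\overline{T_{k-1}\setminus T_{k-2}}$, which forces $[z(2\theta),p_c(x)]\subset[z(2\theta),x_1]$ (possibly strictly). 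The inductive hypothesis applied to the \emph{larger} arc $[z(2\theta),x_1]$ then gives injectivity for $k-1$ further iterates on the sub-arc, which is all you need. Replacing your attempted equality by this preimage inclusion repairs the argument with no loss.
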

\begin{proof}
For every integer $m\ge 0$, let $T_m$ be the minimal tree connecting the $\alpha$ fixed point to all dyadic endpoints of generation at most $m$. Let $f$ be the map on the Julia set. 

The edge $[z(\theta),x]\subset \ovl{T_k\sm T_{k-1}}$ certainly maps forward homeomorphically one generation to an arc $[z(2\theta),x']$, where $x'=f(x)\in f(T_{k-1})$. 
We claim that $[z(2\theta),x']\subset \ovl{T_{k-1}\sm T_{k-2}}$ so that the inductive step applies and completes the proof. 

We first show that 
\begin{equation}
\label{eq:T_k-2InT_k-1}f^{-1}(T_{k-2})\subset T_{k-1}.
\end{equation} 
Indeed, $T_{k-2}$ is the minimal tree connecting all dyadic endpoints of generation at most $k-2$. Consider an endpoint $y$ of the forest $f^{-1}(T_{k-2})$. If $f(y)\in T_{k-2}$ was not an endpoint, so it was connected to  at least two edges in $T_{k-2}$, then $y$ would have to be connected to at least two edges in $f^{-1}(T_{k-2})$, a contradiction. Thus every endpoint of the forest $f^{-1}(T_{k-2})$ is a dyadic endpoint of generation at least $k-1$ and hence $f^{-1}(T_{k-2})\subset T_{k-1}$.

Finally, if $[z(2\theta),x']$ intersects $T_{k-2}$, then $[z(\theta),x]$ intersects $T_{k-1}$, and by hypothesis this intersection is the single point $x$. Hence $[z(2\theta),x']\subset (T_{k-1}\sm T_{k-2})\cup\{x'\}$.
\end{proof}

\begin{remark}
In this lemma, the hypothesis that the polynomial be dyadic was stated only for convenience. All we are using is that the Julia set is path connected (if there are bounded Fatou components, the notation needs minor adjustments). 
\end{remark}


\begin{lemma}[Homeomorphic Preimage of Arc]
\label{Lem:HomeomorphicPreimage} \lineclear
Suppose that $c\lhd c'$ are two dyadic parameters and let $\theta,\theta'$ be their external angles. In the Julia set of $c$, let $x$ be the branch point between $0$, $z(\theta)=c$ and $z(\theta')=:z'$. If $\theta=a/2^k$, then there is a point $y'\in(z',x)$ so that $p_c^{\circ k}\colon [z',y']\to p_c^{\circ k}([z',y'])= p_c^{\circ k}([ c,x])$  is a homeomorphism.
\end{lemma}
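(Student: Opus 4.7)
The plan is to produce $y'$ as the endpoint of a lift of the arc $p_c^{\circ k}([c,x])$ along an inverse branch of $p_c^{\circ k}$ rooted at $z'$, and to verify that this lift lies inside $[z',x]$.

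I would first identify the image of $[c,x]$. By Lemma~\ref{Lem:InjectiveDynamicsLastEdge} applied to $\theta=a/2^k$, the edge $[c,\tilde x]$ from $c$ to the attachment point $\tilde x$ of $[c,\alpha]$ in $T_{k-1}$ maps homeomorphically via $p_c^{\circ k}$ onto a sub-arc $[\beta,w_0]\subseteq[\beta,\alpha]$. Since $z'\in T_{k'}\subseteq T_{k-1}$, the arc from $c$ to $z'$ enters $T_{k-1}$ at $\tilde x$; combined with Lemma~\ref{Lem:DirectlySubordinateDynamics}, which exhibits $x$ as a Julia-set branch point with at least three landing rays separating the directions of $\beta$, $c$, and $z'$ in distinct sectors, one deduces $\tilde x = x$: the paths from $c$ to $0$ and from $c$ to $z'$ must diverge the instant they enter $T_{k-1}$. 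Hence $p_c^{\circ k}\colon[c,x]\to[\beta,w]$ is a homeomorphism, with $w:=p_c^{\circ k}(x)\in[\beta,\alpha]$.

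Next I lift $[\beta,w]$ along an inverse branch rooted at $z'$. Because $\theta'=a'/2^{k'}$ with $k'\le k$, one has $p_c^{\circ k}(z')=\beta$, so $z'$ is a preimage of $\beta$ under $p_c^{\circ k}$. Since $\beta$ carries a single ray (angle $0$), it admits a unique direction into the Julia set along $[\beta,\alpha]$, and a local inverse branch of $p_c^{\circ k}$ near $\beta$ sends $\beta$ to $z'$ and lifts a short initial sub-arc of $[\beta,w]$ to an arc starting at $z'$. Extending the lift continuously, one obtains a curve in the Julia set whose image under $p_c^{\circ k}$ is $[\beta,w]$, provided no intermediate iterate of the lift passes through the critical point $0$. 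Set $y'$ to be the endpoint of this lift.

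The main obstacle is verifying that the extension of the lift is unobstructed and that the lift stays inside $[z',x]$ throughout. The initial direction of the lift from $z'$ points into the sector of $z'$ at $x$, which is separated from the sector of $0$ by $x$ itself, so the lift starts inside $[z',x]$. For the inductive step, the iterated arcs $p_c^{\circ i}(\text{lift})$ and $p_c^{\circ i}([c,x])$ share the common endpoint $p_c^{\circ i}(x)$ and map under $p_c^{\circ (k-i)}$ to the same sub-arc of $[\beta,w]$; combined with the fact that $p_c^{\circ i}([c,x])$ avoids $0$ for $0\le i<k$ (by Lemma~\ref{Lem:InjectiveDynamicsLastEdge}), a sector-tracking argument at each intermediate iterate forces $p_c^{\circ i}(\text{lift})$ to remain in the ``matching'' sector and hence also avoid $0$. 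The lift therefore completes, yielding $y'\in(z',x)$ with $p_c^{\circ k}\colon[z',y']\to[\beta,w]=p_c^{\circ k}([c,x])$ a homeomorphism.
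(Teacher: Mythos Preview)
Your overall plan---produce $y'$ by lifting $p_c^{\circ k}([c,x])=[\beta,w]$ along an inverse branch of $p_c^{\circ k}$ rooted at $z'$---is the right idea, and the first paragraph correctly identifies the target arc. But the final paragraph does not work as written.

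The specific claim that ``the iterated arcs $p_c^{\circ i}(\text{lift})$ and $p_c^{\circ i}([c,x])$ share the common endpoint $p_c^{\circ i}(x)$'' is false: the lift is $[z',y']$ with $y'\in(z',x)$ strictly, so $x$ is not an endpoint of the lift and $p_c^{\circ i}(x)$ is not an endpoint of $p_c^{\circ i}([z',y'])$. (In fact the paper shows $p_c^{\circ k'}$ is still injective on $[z',x]$, so $p_c^{\circ i}(y')\neq p_c^{\circ i}(x)$ for $i\le k'$.) Without that shared endpoint, your ``sector-tracking argument'' has nothing to track: knowing that $p_c^{\circ i}([c,x])$ avoids $0$ tells you nothing a priori about the completely different arc $p_c^{\circ i}([z',y'])$. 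So you have not established that the intermediate iterates of the lift avoid the critical point, and the lift construction is not justified.

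The paper closes this gap by splitting the $k$ iterations into two controlled pieces. First, applying Lemma~\ref{Lem:InjectiveDynamicsLastEdge} to $z'$ (not to $c$) gives a point $x'\in[z',0]$ with $p_c^{\circ k'}\colon[z',x']\to[\beta,\alpha]$ a homeomorphism onto its image; the Correspondence Theorem and $c\lhd c'$ force $x'\in[x,0]$, so in particular $p_c^{\circ k'}\colon[z',x]\to[\beta,y'']$ is already a homeomorphism. Second, for the remaining $k-k'$ steps one works entirely on $[\beta,\alpha]$ using the explicit inverse branch $p_c^{-1}\colon[\beta,\alpha]\to[\beta,-\alpha]$, pulls $[\beta,y]$ back to $[\beta,y''']\subsetneq[\beta,y'']$, and finally applies the known homeomorphism $(p_c^{\circ k'})^{-1}\colon[\beta,y'']\to[z',x]$ to produce $y'$. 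The point is that the $k'$-step homeomorphism on $[z',x]$ (coming from the injectivity lemma applied to $z'$, not $c$) is what lets you control the lift; your direct one-shot lift via $p_c^{-k}$ has no such control. A smaller issue: your argument for $\tilde x=x$ tacitly assumes $0\in T_{k-1}$, which you have not checked.
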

\begin{proof} \looseness-1
We know from Lemma~\ref{Lem:InjectiveDynamicsLastEdge} that $[c,x]$ maps homeomorphically for $k$ iterations to a subinterval of $[\beta,\alpha]$; define $y:=p_c^{\circ k}(x)\in(\beta,\alpha]$.
Similarly, there is a point $x'\in[z',0]$ so that $[z',x']$ maps forward homeomorphically for $k'$ iterations (if $\theta'=a'/2^{k'}$). We have 
$x'\in[x,0]$ (or equivalently $x\in[z',x']$) because $c\lhd c'$, i.e.\ $c$ is \emph{directly} subordinate to $c'$.

More precisely, if $c_*$ is the point where the vein of $c$ terminates (Lemma~\ref{Lem:DirectlySubordinateDynamics}), then the external angles of the dynamic rays landing at $x$ are exactly the external angles of the parameter rays bounding the subwake of $c_*$ containing $c$.  Analogously, the same is true for the point $x'$ and the vein of $c'$; let $c'_*$ be this branch point. But since $c\lhd c'$, it follows that $c'_*$ separates $c_*$ from the origin, and hence, by the Correspondence Theorem~\ref{thm:Corresp}, $x'$ separates $x$ from the origin.


\begin{figure}[htbp]
\centerline{
\includegraphics[width=40mm]{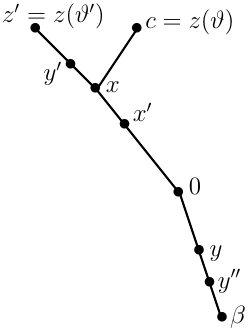}
}
\caption{Illustration of the relative position of various points in the Hubbard tree of $c$ in the proof of Lemma~\ref{Lem:HomeomorphicPreimage}. Note that we do not know or need the relative position between $0$, $y$, and $y''$; in particular, we do not claim $y''\in[\beta,y]$.
}
\label{Fig:HubbardTreeVariousPoints}
\end{figure}

Let $y'':=p_c^{\circ k'}(x)$; then $p_c^{\circ k'}\colon[z',x]\to[\beta,y'']$ is a homeomorphism. Iterating this $k-k'$ further times, the image arc terminates at $\beta$ and at $y$, but it can no longer be injective (the map $p_c^{\circ k}\colon[c,x]\to [\beta,y]$ is a homeomorphism, and $p_c^{\circ k}$ is a local homeomorphism near $x$ because $x$ cannot be on the critical orbit). 

There is a branch $p_c^{-1}\colon[\beta,\alpha]\to[\beta,-\alpha]$; let $[\beta,y''']$ be the image of $[\beta,y]$ under the $k-k'$-th iterate of this branch. 
Observe that $[y''',\beta]\subsetneq[y'',\beta]$  because otherwise $p_c^{\circ (k-k')}$ restricted to $[\beta,y'']$ would have degree $1$. Pulling back $k'$ times we obtain an interval $[z',y']\subset[z',x]$ so that $p_c^{\circ k'}\colon [z',y']\to[\beta,y''']$ and $p_c^{\circ k}\colon[z',y']\to[\beta,y]$ are homeomorphisms, as claimed.
\end{proof}


\begin{proposition}[Injection Between Precritical Points]
\label{Prop:InjectionPrecritical} \lineclear
Given two dyadic parameters $c\lhd c'$, there exists a generation-preserving injection $B$ from the set of precritical points in $[c,-\alpha]$ of $p_c$ to the set of precritical points in $[c',-\alpha]$ of $p_{c'}$. Moreover, $B$ can be taken to satisfy the following properties (A)--(C) for every precritical point $\zeta\in [c,-\alpha]$. 

\begin{itemize}
\item[(A)] For $k\ge 0$ we have $p_c^{\circ k}(\zeta)\in [c,-\alpha]$  if and only if  $p_{c'}^{\circ k}(B(\zeta))\in [c',-\alpha]$. Moreover, if $p_c^{\circ k}(\zeta)\in [c,-\alpha]$, then  
\[
B(p_c^{\circ k}(\zeta))=p_{c'}^{\circ k}(B(\zeta)).
\]
\item[(B)] Assume $c_*$ is a postcritically finite parameter such that $c_* \prec c$ and $c_* \prec c'$. Let $x_*$ and $x'_*$ be the  dynamical counterparts of $c_*$ in the dynamical planes of $c$ and $c'$. Then $\zeta\in [x_*,-\alpha]$ if and only if $B(\zeta)\in [x'_*,-\alpha]$.
\item[(C)] There is a sub-interval $J\subset [c', \alpha]$ such that $p^{\circ k }_{c'}(J)=[-\alpha, \alpha ]$ for some $k>0$ and such that the image of $B$ is in $[c',-\alpha]\setminus J$.
\end{itemize}
\end{proposition}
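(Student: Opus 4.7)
The strategy is to build $B$ explicitly using the surgery from Proposition~\ref{Prop:RelationDyadicTrees} together with the homeomorphism produced by Lemma~\ref{Lem:HomeomorphicPreimage}. First I would decompose $[c,-\alpha]=[c,x]\cup[x,-\alpha]$, where $x$ is the branch point from that lemma. Via the surgery, the Hubbard tree of $p_{c'}$ is identified with the $f$-invariant subtree $H'\subset H$; under this identification, the arc $[c',-\alpha]$ corresponds to $[z',-\alpha]=[z',x]\cup[x,-\alpha]$. On $[x,-\alpha]$, which is completely untouched by the surgery, I would define $B$ to be the identity (composed with the topological conjugacy $\Phi\colon(H',f)\to(H_{c'},p_{c'})$). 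On $[c,x]$, Lemma~\ref{Lem:HomeomorphicPreimage} supplies a subinterval $[z',y']\subset[z',x]$ on which $p_c^{\circ k}$ restricts to a homeomorphism onto $[\beta,y]$, which is exactly the image $p_c^{\circ k}([c,x])$ from Lemma~\ref{Lem:InjectiveDynamicsLastEdge}. Composing these two homeomorphisms yields a canonical identification $\psi\colon[c,x]\to[z',y']$, and I would set $B|_{[c,x]}=\Phi\circ\psi$.

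The heart of the argument is that $\psi$ preserves the generation of precritical points. A point $\zeta\in[c,x]$ with $p_c^{\circ n}(\zeta)=0$ and its twin $\psi(\zeta)$ share the same $p_c^{\circ k}$-image; after that common step the modified map $f$ agrees with $p_c$ (since the orbit has exited $[x,c]$), so $f^{\circ n}(\psi(\zeta))=0$ in $H'$, which via the conjugacy $\Phi$ makes $\psi(\zeta)$ a precritical point of $p_{c'}$ of the same generation $n$. Injectivity of $B$ is automatic from the injectivity of $\psi$ (and of $\Phi$) together with the disjointness of $[c,x]$ and $[x,-\alpha]$ on the source side and of $[z',y']$ and $[x,-\alpha]$ on the target side.

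Property (A) is then a direct consequence of this piecewise conjugacy: wherever the $p_c$-orbit of $\zeta$ stays in $[c,-\alpha]$, the corresponding $f$-orbit of $\psi(\zeta)$ stays in $[z',-\alpha]$ and is obtained by iterating $B$; equivalently, the $p_{c'}$-orbit of $B(\zeta)$ stays in $[c',-\alpha]$ and commutes with $B$. Property (B) follows from Lemma~\ref{Lem:DirectlySubordinateDynamics} together with the Correspondence Theorem: $x_*$ and $x_*'$ are dynamical counterparts of the same postcritically finite parameter $c_*$, and the common combinatorics at these branch points pins down the sub-arcs $[x_*,-\alpha]$ and $[x_*',-\alpha]$ so that $B$ respects them. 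For (C), I would take $J:=\Phi([y',x])\subset[c',\alpha]$: by construction the image of $B$ lies in $\Phi([z',y'])\cup[x,-\alpha]$, which is disjoint from $J$, and because $p_c^{\circ k}$ maps $[y',x]$ onto an arc reaching $y$ while further iterates of $f$ spread this arc across $[-\alpha,\alpha]$, the corresponding iterate of $p_{c'}$ on $J$ will cover all of $[-\alpha,\alpha]$.

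The step I expect to be the main obstacle is the generation-preservation of $\psi$: one must verify that the $f$-orbit of $\psi(\zeta)$, which a priori lives in the augmented tree $H$ rather than in $H'$, actually returns to $H'$ and reaches the critical point in exactly $n$ steps. The combinatorial fact that the $p_c$-orbit of $\zeta\in[c,x]$ enters the replaced edge $[x,c]$ at most at the final step before hitting $0$ is the crux, and it is precisely what the matching of images in Lemma~\ref{Lem:HomeomorphicPreimage} is engineered to guarantee. Once this piece is in place, the remaining verifications of (A)--(C) are a bookkeeping exercise on the subarcs produced by the surgery.
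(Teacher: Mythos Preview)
Your construction has a genuine gap at exactly the point you flagged as the crux. The claimed ``combinatorial fact'' that the $p_c$-orbit of a precritical point $\zeta\in[c,x]$ enters $[c,x]$ at most at the final step before hitting $0$ is simply false, and Lemma~\ref{Lem:HomeomorphicPreimage} does not guarantee it. That lemma only matches the $k$-th images $p_c^{\circ k}([c,x])=p_c^{\circ k}([z',y'])=[\beta,y]$; it says nothing about later returns. Concretely: if $\zeta_0\in[c,x]$ is a precritical point of generation $n_0$ whose orbit does avoid $(c,x)$ until step $n_0$, then the two sub-intervals of $[c,x]$ adjacent to $\zeta_0$ map homeomorphically onto $[c,x]$ under $p_c^{\circ n_0}$, and any precritical point $\zeta$ of higher generation lying in one of those sub-intervals satisfies $p_c^{\circ n_0}(\zeta)\in(c,x)$ --- its orbit revisits $(c,x)$ at step $n_0$, possibly long before it reaches $0$. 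So a single application of your $\psi$ does not produce an $f$-precritical point in general: after step $k$ the $p_c$-orbit of $\zeta$ and the $f$-orbit of $\psi(\zeta)$ diverge the moment the $p_c$-orbit re-enters $[c,x]$ (that is where $f=\rho\circ p_c$ differs from $p_c$).

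The same issue breaks your identity map on $[x,-\alpha]$: a $p_c$-precritical point there whose orbit passes through $(c,x)$ is not an $f$-precritical point, so ``$B=\mathrm{identity}$'' fails for it. The paper repairs both defects by induction on the number $m$ of visits to $(c,x)$. For $m=0$ your $\psi$ is exactly what is used; one then records that each such $\zeta_0$ is flanked by two sub-intervals of $[c,x]$ mapping homeomorphically onto $[c,x]$, and that $\psi$ carries these to matching sub-intervals of $[z',y']$ mapping homeomorphically onto $[z',y']$ under $f$ --- call this interval bijection $B_0^*$. For a precritical point with $m+1$ visits one pulls back via $B_0^*$ through the first return interval and then applies the inductive $B_m$. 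The injection on $[x,-\alpha]$ is handled the same way once an orbit enters $(c,x)$. Properties (A)--(C) then follow because this inductive $B$ moves points only within the (nested) return intervals, all of which lie in $[c,x]$ on the source side and in $[z',y']$ on the target side; in particular $x_*$ lies outside every such interval, which gives (B), and the gap $[y',x]$ is never hit, which gives (C).
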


\begin{proof}
Using Proposition~\ref{Prop:RelationDyadicTrees} and its notation, we may identify $p_{c'}:H_{c'}\to H_{c'}$ with $f:H'\to H'$. 
The point $x_*\in H$ is periodic or preperiodic under iteration of $p_c$ and never visits $[x,c]$; thus $x_*\in H'$ is identified with $x'_*\in H_{c'}$.   
We will now construct a bijection $B$ between precritical points in $[c,-\alpha]$ of $p_c$ and those precritical points in $[z',-\alpha]\setminus [y',x]$ of $f$ for which the orbit never visits $[y',x]$, where $y'$ is specified in Lemma~\ref{Lem:HomeomorphicPreimage} (see Figure~\ref{Fig:HubbardTreeVariousPoints}).
In fact, our bijection will preserve the itinerary with respect to $H\sm\{0\}$, except for $k$ iterations along the orbit from $[c,x]$ to $[\beta,y]=p_c^{\circ k}([c,x])$ (resp.\ from $[z',y']$ to $[\beta,y]$). 

Our proof proceeds by induction on the number of times, say $m$, that an orbit of a precritical point $\zeta$ runs through $[c,x]$ (not counting $\zeta$ itself). We start by those precritical points on $[c,x]$ (for the map $p_c$) that never run through $(c,x)$ again, that is with the case $m=0$. By Lemma~\ref{Lem:HomeomorphicPreimage}, an appropriate branch of $f^{\circ(-k)}\circ p_c^{\circ k}$ sends $[c,x]$ homeomorphically to $[z',y']$; call this branch $\eta\colon[c,x]\to [z',y']$. Then for any $a\in[c,x]$, we have $p_c^{\circ k}(a)=f^{\circ k}(\eta(a))$ and the future orbits of these points under $p_c$ respectively under $f$ coincide as long as the orbits avoid $[c,x]$. Note that all precritical points on $[c,x]$ must have generation at least $k$. We thus obtain an injection, say $B_0$, of precritical points with $m=0$.

Every precritical point $\zeta\in[c,x]$ of generation $n$ and with $m=0$ is the common endpoint of two adjacent sub-intervals of $[c,x]$ that map homeomorphically onto $[c,x]$ after $n$ iterations: we have $p_c^{\circ n}(\zeta)=c$ and $p_c^{\circ (n-1)}(\zeta)=0$, so we can pull the entire interval $[c,x]$ back in two ways (with a choice in the first step) until we end at $\zeta$. The pull-back of the entire interval $[c,x]$ is possible because no critical value can interfere (the critical orbit visits only endpoints of the tree), and the resulting interval is in $[c,x]$ because the Hubbard tree is unbranched on $[c,x)$ and the orbit of $x$ never enters $[c,x)$.

There is an analogous result about precritical points $\zeta'\in[z',y']$ of $f$ with $m=0$ and sub-intervals of $[z',y']$ that map to $[z',y']$. By construction of $y'$, the point $\zeta'$ has generation $n\ge k$, and there is a precritical point $\zeta=\eta^{-1}(\zeta')\in[c,x]$. The point $\zeta$ has a neighborhood, say $I_\zeta\subset [c,x]$, that maps $2:1$ onto $[c,x]$ (the union of the two intervals constructed above), and then $\zeta'$ has a neighborhood $I_{\zeta'}\subset[z',y']$ with $p_c^{\circ k}(I_{\zeta'})=p_c^{\circ k}(I_\zeta)\subset[\beta,y]$.

The bijection $B_0$ of precritical points with $m=0$ thus extends to a bijection between intervals $I\subset[c,x]$ that map homeomorphically onto $[c,x]$ after some number of iterations without visiting $(c,x)$ before, and intervals $I'\subset [z',y']$ that map homeomorphically onto $[z',y']$ after the same number of iterations and without ever visiting $(z',y')$; this bijection respects the number of iterations as well as the order along the intervals within $[c,x]$ and $[z',y']$ (the intervals are obviously disjoint). Denote this bijection of intervals by $B^*_0$.

Now suppose the statement is shown for all precritical points on $[c,x]$ that visit $(c,x)$ at most $m$ times, for some $m\ge 0$; in particular, we have an injection, say $B_m$, from precritical points on $(c,x)$ that map into $(c,x)$ exactly $m$ times, to precritical points on $(z',y')$ that map into $(z',y')$ exactly $m$ times {and never visit $(y',x)$}. Consider any precritical point $\zeta\in(c,x)$ that visits $(c,x)$ exactly $m+1$ times, and let $s$ be minimal such that $p_c^{\circ s}(\zeta)\in(c,x)$. Then there is an interval $I\ni \zeta$ so that $p_c^{\circ s}\colon I\to[c,x]$ is a homeomorphism (same reasoning as above).
Let $I':=B^*_0(I)$. Then $p_c^{\circ s}(\zeta)\in(c,x)$ is a precritical point that visits $(c,x)$ only $m$ times, and $B_{m+1}(\zeta)=\zeta':= f^{\circ(-s)}\circ B_m \circ p_c^{\circ s}(\zeta)$, choosing the branch $f^{\circ(-s)}\colon [y',z']\to I'$.

Since the map $B^*_0$ is injective, different intervals $I$ land in disjoint intervals $I'$, and since $B_m$ is injective by induction, the restriction of $B_{m+1}$ that run through any particular $I$ is injective too, so in total $B_{m+1}$ is injective as claimed. 

This takes care of all precritical points on $[c,x]$, and we still have to deal with those on $[x,-\alpha]$. But those with orbits that never run through $[c,x]$ are unaffected by the changed dynamics, and the injection easily extends to those that map into $[c,x]$ under $p_c$.

It remains to show that the map $B$ satisfies Properties (A)--(C). Let us extend $B_0^*$ to all intervals in $[x,-\alpha]$ that are injective preimages of $[c,x]$ and never run through $[c,x]$ before mapping into $[c,x]$. It is easy to see that $B_0^*(I)\subset I$ for every such interval $I\not \subset [x,c]$ because $[z',y']\subset [z',x]$ and $p_c^{\circ (-1)}([c,x])=f^{-1}([z',x])$, and induction can be applied. Since $x_*$ never visits $[x,c]$, we see that  $x_*\not \in I$ for every maximal pre-image $I$ of $[c,x]$. Also, by construction, $\zeta \in I$ if and only if $B(\zeta)\in I$ for every precritical $\zeta$ and interval $I\not \subset [x,c]$ as above. Therefore, $\zeta$ and $B(\zeta)$ are on the same side of $x_*$. It is clear that $\zeta$ and $B(\zeta)$ have the same return times to $[c,-\alpha]$ and $[z',-\alpha]$ because $p_c^{\circ k}([c,x])=f^{\circ k}([z',y'])$. And the dynamical relation $B(p_c^{\circ k}(\zeta))=p_{c'}^{\circ k}(B(\zeta))$ holds by construction. Thus Properties (A) and (B) hold. To prove (C) set $J$ to be any {iterated} pre-image of $[\alpha,-\alpha]$ so that $J\subset [x,y']$.
\end{proof}

\begin{proof}[Proof of Theorem~\ref{Thm:RelationDyadicTrees}]
From Proposition~\ref{Prop:InjectionPrecritical} we have an injection of precritical points of $p_{c}$ of any given generation on $[-\alpha,c]$ to precritical points of $p_{c'}$ of the same generation on $[-\alpha',c']$, and by claim (B) this injection restricts to the arcs $[-\alpha,\alpha]$ and $[-\alpha',\alpha']$. This immediately proves part a), and part b) follows from claim (C) of Proposition~\ref{Prop:InjectionPrecritical}.

Part c) of Theorem~\ref{Thm:RelationDyadicTrees} also follows from claim (C) of Proposition~\ref{Prop:InjectionPrecritical}:  since $p_{c'}$ is dyadic, every edge of its Hubbard tree, say $H'$, maps over every other in a bounded number of iterations (the Markov partition associated to the edges is irreducible). Therefore, every typical orbit visits $J$ with positive frequency, and the entropy of orbits in $H'$ that are not allowed to visit $J$ is strictly smaller than the full entropy in $H'$. Precritical orbits in $H$ inject to precritical orbits in $H'$ avoiding $J$, and so $H$ has smaller entropy than $H'$.
\end{proof}


\section{Irrational Angles and the Tiozzo Conjecture}
\label{Sec:IrrationalTiozzoConj}

In this brief section, we will complete the proof of the Tiozzo Conjecture about local maxima of $h$, and we also describe local minima.

\begin{theorem}[The Tiozzo Conjectures]
\label{Thm:TiozzoConj} \lineclear
The entropy function $h\colon\Circle\to[0,\log 2]$ has the following properties.
\begin{enumerate}
\item[a)] 
Every dyadic angle is an isolated local maximum of the entropy function.

\item[b)]
Conversely, every local maximum of $h$ is dyadic. 
\item[c)] 
Within every wake, the entropy function has a unique global maximum, and it occurs at the unique dyadic of lowest denominator in the wake.

\item[d)]
Within every wake, for each $n$ the function $N_\theta(n)$ assumes its  maximum at the dyadic of least generation (of course, this maximum is not unique).
\end{enumerate}
\end{theorem}

\begin{proof}
Fix a dyadic angle $\theta_0$ and let $I=I(\theta_0)\subset\Circle$ be the open interval of angles $\theta$ for which the combinatorial arcs to $c(\theta)$ intersect the interior of the vein of $c(\theta_0)$ (not the long vein). In other words, if $c_*$ is the endpoint of the vein of $c(\theta_0)$, then $I$ consists of the angles within the same subwake of $c_*$ that $c(\theta)$ is in. Clearly $\theta_0\in I$. Every dyadic angle in $I$ is either directly or indirectly subordinate to $\theta_0$ (where the latter means that there is a finite sequence of dyadic angles ending at $\theta_0$ so that each is directly subordinate to the next). 

By Theorem~\ref{Thm:RelationDyadicTrees} part c), we know that $h$ restricted to dyadic angles in $I$ has its unique maximum at $\theta_0$. 

We claim that for every $\theta\in I$ and every $n\in\N$ we have $N_\theta(n)\le N_{\theta_0}(n)$ and hence $h(\theta)\le h(\theta_0)$. Indeed, if $\theta$ is a dyadic angle, then this is Theorem~\ref{Thm:RelationDyadicTrees} part a). And if not, then there is a dyadic angle $\theta'$ so that $c(\theta)\prec c(\theta')$ and we have $N_\theta(n)\le N_{\theta'}(n)\le N_{\theta_0}(n)$ for all $n$ where the second inequality is again the dyadic argument and the first one is Lemma~\ref{Lem:Monotonicity}.  
Therefore $\theta_0$ is a (weak) local maximum of $N_\theta(n)$ for all $n$ and thus of $h$.

Part a) will follow from the stronger claim that $h$ has a unique global maximum on $I(\theta_0)$, and this occurs at $\theta_0$. We know that any dyadic $\theta\in I(\theta_0)\sm\{\theta_0\}$ has $h(\theta)<h(\theta_0)$. 
If $\theta\in I(\theta_0)$ is such that $c(\theta)\prec c(\theta_0)$, then by Lemma~\ref{Lem:Monotonicity} we have $h(\theta)\le h(\theta_0)$; but in fact we have strict monotonicity because there is some dyadic $\theta'$ with $c(\theta)\prec c(\theta')$ and $h(\theta)\le h(\theta')<h(\theta_0)$. 
And if not $c(\theta)\prec c(\theta_0)$, there is another dyadic $\theta'$ so that $\theta\in I(\theta')$ but $\theta_0\not\in I(\theta')$. We then have $h(\theta)\le h(\theta')<h(\theta_0)$. Therefore, $\theta_0$ is the unique global maximum within $I(\theta_0)\ni\theta_0$. This finishes the proof of the stronger version of claim a).

For part c), consider any hyperbolic component $W$ and let $I$ be the open interval of angles within its wake. Let $\theta_W$ be the unique dyadic of lowest generation within $I$. Then $I(\theta_W)\supset I$, and on this interval $h$ has its unique global maximum at $\theta_W$. Now suppose $W$ is a wake that is not the wake of a hyperbolic component: then either it is one of the subwakes of a Misiurewicz-Thurston parameters, or an irrational wake (bounded by two irrational angles with equal angled internal address). But such wakes are exhausted by wakes of hyperbolic components, so the claim holds for them as well. Part d) also follows.

For claim b), suppose $\theta$ is a local maximum of $h$, and let $I\subset\Circle$ be an interval on which $\theta$ is a global maximum. By monotonicity, we may assume that the $\theta$-ray lands (combinatorially) at an endpoint of $\M$. If $\theta$ is not dyadic, then choose a dyadic angle $\theta'\in I$ with $\theta\in I(\theta')\subset I$. 
Then the unique global maximum of $h$ within $I(\theta')$ is at $\theta'$, so $\theta=\theta'$ is dyadic.
\end{proof}

\begin{remark}
For the record, we observe that along the way we proved that core entropy $\tilde h$ is strictly monotone on arcs before dyadic endpoints: if $c'$ is dyadic and $c\prec c'$, then $\tilde h(c)<\tilde h(c')$ (the general result in Lemma~\ref{Lem:Monotonicity} would only give $\tilde h(c)\le \tilde h(c')$). In fact, there are parameters $c\prec c'$ so that entropy is constant along $[c,c']$; this happens when $c$ and $c'$ are within the same little Mandelbrot sets, for instance within the ``main molecule of $\M$'' (which  was shown in \cite{BruinSchleicher} to be the locus of parameters with zero biaccessibility dimension). 

This implies that core entropy is strictly monotone along all veins, even long veins, except within little Mandelbrot sets: 
if $c\prec c'$ are two postcritically finite parameters, then the Hubbard tree of $c$ (more precisely, its marked points)
can be recovered in the Hubbard tree of $c'$, so all precritical orbits of $c$ are found for $c'$, while there is strictly more choice for $c'$. This choice strictly increases entropy except when $c$ and $c'$ are within the same little Mandelbrot set (or the main molecule with entropy $0$): in the latter case, entropy may be dominated by the non-renormalizable dynamics while the extra choices are added only to the renormalizable dynamics (see Section~\ref{Sub:Renormalizable}). 
\end{remark}

The following is a rather obvious restatement of this result for $\tilde h\colon\M\to[0,\log 2]$.

\begin{corollary}[Local Maxima of $\tilde h$ on $\M$]
\lineclear
The function $\tilde h\colon\M\to[0,\log 2]$ has local maxima exactly at parameters $c(\theta)$ with $\theta=k/2^q$, and these are all isolated.
\end{corollary}

The graph in Figure~\ref{Fig:EntropyGraph} also shows local minima, and these can be classified relatively easily; we are grateful to Steffen Maass for his questions.

\begin{theorem}[Local Minima of Entropy]
The function $h\colon\M\to[0,\log 2]$ has an isolated local minimum at $\theta$ if and only if there are two angles $\theta_1,\theta_2$ with $0<\theta_1<\theta<\theta_2<1$ so that all three parameter rays at angles $\theta,\theta_1,\theta_2$ land at a common parameter; in this case all angles $\theta,\theta_1,\theta_2$ are rational with odd denominators that are all equal, and the landing point of these rays is a Misiurewicz-Thurston parameter.

The function $\tilde h\colon\M\to[0,\log 2]$ has no isolated local minimum.
\end{theorem}

\begin{proof}
Since entropy is monotone along the combinatorial arc $[0,c]$ for every $c\in\M$, the entropy function $\tilde h$ cannot have isolated local minima when $\tilde h(c)>0$. Since $\tilde h^{-1}(0)$ is connected (the main molecule of $\M$), there is no isolated local minimum at all.

By strict monotonicity along veins, except within little copies of $\M$, it is obvious that $h$ has an isolated minimum at every angle $\theta$ for which there are angles $\theta_1,\theta_2$ as claimed. Conversely, if $h$ has an isolated local minimum, then all angles associated to the combinatorial arc $[0,c(\theta)]$ must avoid a neighborhood of $\theta$.
For sufficiently small $\eps_1>0$, $\eps_2>0$ so that $\theta-\eps_1\in\Q$ and $\theta+\eps_2\in\Q$, the Branch Theorem applied to $c(\theta-\eps_1)$ and $c(\theta+\eps_2)$ shows that either $c(\theta)$ is a Misiurewicz-Thurston parameter and $\theta=p/q$ with odd $p$ and even $q$, or $c(\theta)$ is a boundary point of a hyperbolic component of $\M$. In the latter case, every neighborhood of $\theta$ contains angles for which the corresponding rays land at the boundary of the same hyperbolic component, and these have the same entropy (even the same number of relevant precritical leaves).
\end{proof}

\reminder{Should we also discuss non-isolated local minima? These occur at little Mandelbrot sets, but this would require the statement that entropy is constant within every little Mandelbrot set that has positive entropy at the root.}



\section{Continuity of Entropy}
\label{Sec:Continuity}

Many of our estimates will involve ``radial growth of entropy'': that is, comparing $\tilde h(c')-\tilde h(c)$ for parameters $c'\succ c$. 
A fundamental case will be when $c'$ is a dyadic endpoint of $\M$ and $c$ is the parameter where the vein of $c'$ terminates. We start by comparing the corresponding Hubbard trees. 
Since we count pre-critical points between $\alpha$ and $-\alpha$, we add the points $\alpha$ and $-\alpha$ to the set of vertices of the Hubbard trees 
(if they are not already there).


\begin{lemma}[Marked Points in Related Hubbard Trees]
\label{Lem:MarkedPointsSubset_new} \lineclear
Suppose $c'\in\M$ is a dyadic parameter and $c\in\M$ is the postcritically finite parameter where the vein of $c'$ ends. We assume that $c\not= 0$. Denote by $H$ and $H'$ the Hubbard trees of $p_c$ and $p_{c'}$. If $-\alpha\not\in  H$, then extend $H$ to $-\alpha$ by adding an extra edge. Then $H$ and $H'$ are related as follows. Let $V$ be the union of the postcritical points, $\{\alpha, -\alpha \}$, and branch points in $H$. Denote by $x$ the dynamical counterpart of $c$ in $H'$. Let $V'$ be the union of the postcritical points, $\{\alpha,-\alpha\}$, branch points, and points on the orbit of $x$ in $H'$, and let $V'_*$ be the union of $\{\alpha, -\alpha\}$, the branch points, and the points on the orbit of $x$ in $H'$. Then 
\begin{itemize}
\item[(A)] $V'\sm V'_*$ are exactly the endpoints of $H'$; and
\item[(B)] there is a bijection $J\colon V\to V'_*$ so that 
\begin{itemize}
\item[(1)] $J(p_c(v))=p_{c'}(J(v))$ for all $v\in V$;
\item[(2)] if $v\in V$ is the landing point of a ray with angle $\phi$, then $J(v)\in V'$ is also the landing point of the ray with angle $\phi$; \item[(3)]  $J(c)=x$; and 
\item[(4)] if a ray pair $RP(\phi_1,\phi_2)$ lands at a non-precritical point of $H$ so that $RP(\phi_1,\phi_2)$ separates two vertices $v_1,v_2\in H$, then $RP(\phi_1,\phi_2)$ separates $J(v_1)$ and $J(v_2)$ in the dynamical plane of $p_{c}$.
\end{itemize}
\end{itemize}
\end{lemma}

\begin{remark}
Note that the tree $H$ contains $-\alpha$ unless $c$ is immediately satellite renormalizable (see Lemma~\ref{Lem:RenormMinimalPeriod}); in all other cases $-\alpha$ is in the connected hull of the critical orbit. 

Observe also that every $v\in V$ is either the landing point of a periodic or preperiodic dynamic ray, or on the orbit of $c$.
\end{remark}

\begin{proof}
The set of marked points of the Hubbard tree $H$ are the postcritical points, which include the endpoints, the branch points,  and $\{\alpha, -\alpha\}$. They form the set $V$, and we show that these points exist, with the same combinatorics, for all parameters $c''\succ c$, including $c'$. 

Let $v$ be a branch point of $H$. Then $v$ is the landing point of at least three dynamic rays, all periodic or preperiodic, and the rays at the same angles land at a common point for all parameters $c''\succ c$, in particular for $c'$, by Corollary~\ref{Cor:CountPartsHubTree}. It is not hard to see that the tree $H'$ has (at least) as many branches at the corresponding point as $H$ does at $v$. This defines the map $J$ on the set of branch points on $H$ in a natural way.  The case $v\in\{\alpha, -\alpha\}$ is treated similarly.

All further marked points of $H$ are the critical value $c$ and its finite forward orbit. In the dynamics of $H'$, there is the dynamical counterpart $x$ of $c$, see Definition~\ref{Def:DynamCounterpart}. Let us set $J(p_c^{\circ n}(c)):=p_{c'}^{\circ n}(x)$ and show that $J$ is well defined and satisfies the requirements (1)--(4).

If $c$ is a Misiurewicz-Thurston parameter, then $p^{\circ k}_c(c)$ is the landing point of at least two external rays (because $c\prec c'$ is not an endpoint). Moreover, $p^{\circ k}(c)$ is the landing point of a ray $R(\phi)$ if and only if $R(\phi)$ lands at $J(p^{\circ k}(c))=p'^{\circ k}(x)$ in the dynamical plane of $p'$. Thus $J$ is well defined and satisfies (1)--(4) in the Misiurewicz-Thurston case.

The other case is that $c$ is the center of a hyperbolic component $W\subset\M$. In this case, no ray lands at $p^{\circ k}(c)$ for $k\ge 0$ because these points are in the Fatou set. By definition, $J(c)=x$ is the landing point of a periodic ray pair, say $RP(\phi_-,\phi_+)$, so that in parameter space the ray pair at the same angles bounds the subwake of $c$ containing $c'$. Denote by $c_*\in \partial W$ the landing point of the parameter ray pair $RP(\phi_-,\phi_+)$. For a parameter $w\in \ovl W$, let $\gamma_w$ be a periodic point on the unique non-repelling orbit; there is a unique continuous choice so that $\gamma_c=c$.  Then  $\gamma_{c_*}$ is the landing point of the dynamic ray pair $RP(\phi_-,\phi_+)$. Observe also that the cycle $\{p_w^{\circ k}(\gamma_w)\}_{k\ge 0}$ for $w\in W\cup\{c_*\}$ does not cross any ray pair $RP(\phi_1,\phi_2)$ as in the requirement (4) of $J$. This proves claim (4) because the rays landing at non-precritical points of $H$ plus rays $R(\phi_-)$ and $R(\phi_+)$ are stable in the subwake of $c$ containing $c'$. The requirements (1)--(3) are immediate.

We get a natural injection $J\colon V\to V'$. None of the image points are endpoints: the images of branch points are branch points, the images of $\alpha, -\alpha$ are $\alpha, -\alpha$, and the image of $x$ is on $[0,c']$ and not an endpoint, so the forward iterates of $x$ cannot be endpoints either. Hence $J(V)\subset V'_*$. Since there is no branch point on $(x,c']$ (Lemma~\ref{Lem:NoExtraBranchPoint}), all branch points of $H'$ are in the connected hull of the orbit of $x$. Therefore, $J:V'\to V'_*$ is a bijection. This completes the proof.
\end{proof}



\begin{lemma}[Corresponding Dynamics on Edges]
\label{lem:HintoH'} \lineclear
\looseness-1
Let $H$ and $H'$ be the Hubbard trees of $c$ and $c'$ as in Lemma~\ref{Lem:MarkedPointsSubset_new}. Let $V$ and $V'$ be the vertex sets of $H$ and $H'$ (again as in Lemma~\ref{Lem:MarkedPointsSubset_new}). Then the bijection $J\colon V\to V'_*$ extends to an injection of edges in $H$ to edges in $H'$. The image of $H$ under this map on edges is the connected hull in $H'$ containing the orbit of $x$. 

Let $e'_0\subset H'$ be the edge that contains the critical point in its interior. 
The critical point of $p_c$ is in the interior of $J^{-1}(e'_0)\subset H$ if $c$ is pre-periodic, and it is on the boundary of $J^{-1}(e'_0)\subset H$ if $c$ is periodic.
Moreover, an edge $e_1\subset H$ covers once (resp.\ twice) an edge $e_2\subset H$ under $p_c$ if and only if the edge $J(e_1)\subset H'$ covers once (resp.\ twice) an edge $J(e_2)\subset H'$ under $p_{c'}$.

If there are two edges $e'_1,e'_2\subset H'$ with $e'_1\subset J(H)$ and $e'_2\not\subset J(H)$ so that $p_{c'}(e'_1)\supset e'_2$, then $e'_1=e'_0$ and $e'_2=[c',x]$.
\end{lemma}
\begin{proof}
If an edge $e\subset H$ connects two vertices $v_1, v_2\in V$, then by Lemma~\ref{Lem:MarkedPointsSubset_new} the vertices $J(v_1), J(v_2)\in V'$ are adjacent; i.e.\ $J(v_1)$ and $J(v_2)$ are connected by an edge that we defined to be $J(e)$. This extends $J$ to an injection of edges in $H$ to edges in $H'$. Clearly, $J(H)$ is the connected hull of $V'_*=J(V)$. 

Since $p_{c'}$ is a Misiurewicz-Thurston parameter, the critical point of $p_{c'}$ is in the interior of an edge; we denote this edge by $e'_0=[a,b]\subset H'$. Then both pre-images of $x$ must be in $[a,b]$ because $(x,c']$ contains no branch point (Lemma~\ref{Lem:NoExtraBranchPoint}) and thus no marked point other than $c'$.

If $c\in H$ is periodic, then so is $x\in H'$, and at least one of the two pre-images of $x$ must be in $V'$, so this point must be in $\{a,b\}$. Thus one of $p_{c'}(a)$ and $p_{c'}(b)$ equals $x$. Since $x=J(c)$, one of $J^{-1}(a)$ or $J^{-1}(b)$ is the critical point in $H$.

If $c\in H$ is pre-periodic, then the critical point $0\in H$ is not a marked point, so it is an interior point of some edge, say $e_0$. All marked points in $H$ are landing points of (pre)periodic dynamic rays, and the map $J$ respects their external angles, and this implies that $J$ must send $e_0$ to $e'_0$ (the critical point must be accessible by two rays with angles that differ by $1/2$).

It is now easy to see that $J$ respects the dynamics of edges in $H$ and in $H'$, except that $e'_0$ covers, in addition, twice $[c',x]$. Indeed, if an edge $[v,w]\in H$ is different from $e_0$, then  $[J(v),J(w)]\not= e'_0$; thus $p_{c}$ maps $[v,w]$ homeomorphically onto $[p_{c}(v),p_{c}(w)]$, while $p_{c'}$ maps $[J(v),J(w)]$ homeomorphically onto $[p_{c'}(J(v)),p_{c'}(J(w))]=[J(p_c(v)),J(p_c(w))]$.

It remains to analyze the edges $e'_0=[a,b]$ and $e_0:=[J^{-1}(a),J^{-1}(b)]$; this is done similarly: 
the arcs $[a,0]$ and $[0,b]$ cover homeomorphically $[p_{c'}(a),c']$ and $[c',p_{c'}(b)]$ respectively, while $[J^{-1}(a),0]$ and $[0,J^{-1}(b)]$ respectively cover  $[p_{c}(J^{-1}(a)),c]$ and $[c,p_{c}(J^{-1}(b))]$. 

Finally, if $e'_1\subset J(H)$ but $p_{c'}(e'_1)\not\subset J(H)$, then $p_{c'}\colon e'_1\to p_{c'}(e'_1)$ cannot be a homeomorphism (because $J(H)$ is connected), so $e'_1$ must be the unique edge containing the critical point, so $p_{c'}(e'_1)\ni c'$; since $(x,c')$ contains no marked point of $H'$, we have $p_{c'}(e'_1)\sm J(H)=(x,c']$. 
\end{proof}


Next we need a combinatorial estimate. A ``combinatorial pattern of length $n$ with gap size $s$'' is a finite sequence of integers $(j_1,j_2,\dots,j_m,n)$ with $1\le j_1< j_2 < \dots < j_m< n$ and $j_{i+1}-j_i\ge s$ and $n-j_m\ge s$.

\begin{lemma}[Number of Combinatorial Patterns]
\label{Lem:CombinatorialPatterns} \lineclear
The number of combinatorial patterns of length $n$ with gap size $s$ is at most $e^{(n/s)\log(s+1)}$.
\end{lemma}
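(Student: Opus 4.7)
The plan is to prove this by a straightforward "block encoding" argument, with no dynamics involved. The first observation I would make is that every entry $j_i$ lies in the set $\{1, 2, \dots, n-s\}$, since $j_i \le j_m \le n - s$ by the final-gap constraint $n - j_m \ge s$. This stronger upper bound, rather than just $j_i < n$, turns out to be crucial for the sharp constant.

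Next I would partition $\{1, 2, \dots, n-s\}$ into $t := \lceil (n-s)/s \rceil$ consecutive blocks of length at most $s$, for instance $\{1,\dots,s\}$, $\{s+1,\dots,2s\}$, and so on. The key observation is that at most one of the $j_i$'s can lie in any given block: two entries sharing a block of length at most $s$ would differ by at most $s - 1$, violating the gap condition $j_{i+1} - j_i \ge s$. Each block therefore contributes at most $s + 1$ possibilities (either no $j_i$, or one $j_i$ at one of at most $s$ positions), and multiplying over the $t$ blocks yields at most $(s+1)^t$ patterns. Many such choices will fail to be valid patterns because of the gap condition across block boundaries, but overcounting is harmless for an upper bound.

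To conclude, I would verify the arithmetic identities $\lceil (n-s)/s \rceil = \lceil n/s \rceil - 1 \le n/s$ (both cases $s \mid n$ and $s \nmid n$ work out immediately), giving $(s+1)^t \le (s+1)^{n/s} = e^{(n/s)\log(s+1)}$. There is no real obstacle beyond recognizing that one must exploit the final-gap condition $n - j_m \ge s$: without it, one would need $\lceil n/s \rceil$ blocks rather than $\lceil n/s \rceil - 1$, losing a factor of $s+1$ that would ruin the stated bound.
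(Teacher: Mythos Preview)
Your proof is correct and follows essentially the same block-encoding strategy as the paper: both arguments partition positions into intervals of length at most $s$, observe that each interval can contain at most one $j_i$ (giving $\le s+1$ choices per block), and use the final-gap constraint $n-j_m\ge s$ to reduce the effective number of blocks to $\le n/s$. The only cosmetic difference is in bookkeeping: the paper encodes patterns as binary strings of length $n$ with a forced $1$ in the last position, writes $n=ks+r$ with $r<s$, and declares the final short block of size $r$ to be completely determined, leaving $k\le n/s$ free blocks; you instead restrict the $j_i$'s to $\{1,\dots,n-s\}$ up front and count $\lceil(n-s)/s\rceil=\lceil n/s\rceil-1\le n/s$ blocks directly. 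Your version is arguably cleaner at the edge cases (the paper's phrase ``the last block has $r-1$ digits $0$'' is awkward when $r=0$), but the two arguments are the same in substance.
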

\begin{proof}
The number of combinatorial patterns equals the number of binary sequences of length $n$ where two consecutive digits $1$ have distance at least $s$, and so that the final digit is a $1$. Write $n=ks+r$ with $r<s$. Then each block of $s$ consecutive entries has $s+1$ possibilities because it has at most a single $1$, and the last block has $r-1$ digits $0$ followed by a $1$, so it has $1$ possibility. The number of combinatorial patterns is thus at most $(s+1)^k= e^{k\log(s+1)}\le e^{(n/s)\log(s+1)}$.
\end{proof}


In the proof of Proposition~\ref{Prop:BoundEntropyIncrease}, it will be convenient to define ``relevant precritical points'' as precritical points on $[\alpha,\beta]$, i.e.\ precritical leaves separating the two fixed points $\alpha$ and $\beta$ or their corresponding leaves in the lamination (rather than separating $\alpha$ from $-\alpha$ as before). We thus start by showing that this will not affect the value of the entropy.


\begin{lemma}[Different Counts Yield Identical Entropy]
\label{lem:DifferCounts}
\lineclear
In any invariant quadratic lamination, let $N_1(n)$ be the number of precritical leaves that separate $\alpha$ from $-\alpha$, and let $N_2(n)$ be the number of precritical leaves of generation $n$ that separate $\alpha$ from $\beta$. Then
\[
\limsup_n \frac 1 n \log N_1(n) = \limsup_n \frac 1 n \log N_2(n)
\;;
\]
in other words, both counting functions define the same entropy. 
\end{lemma}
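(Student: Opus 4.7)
The plan is to establish the dynamical identity $N_2(n)=N_1(n)+N_2(n-1)$ and derive the conclusion from the observation that a cumulative sum of nonnegative terms has the same exponential growth rate as its summands.

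First, I would decompose $N_2(n)=N_1(n)+M(n)$, where $M(n)$ counts precritical leaves of generation $n$ lying on the combinatorial arc $[-\alpha,\beta]$ — those that separate $-\alpha$ from $\beta$ without separating $\alpha$ from $-\alpha$. The inclusion $N_1(n)\le N_2(n)$ underlying this decomposition is automatic: $\beta$ sits beyond $-\alpha$ as seen from $\alpha$ in the (combinatorial) tree, so any leaf that separates $\alpha$ from $-\alpha$ automatically separates $\alpha$ from $\beta$.

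The heart of the argument is to show that the doubling map restricts to a bijection $[-\alpha,\beta]\to[\alpha,\beta]$, which yields $M(n)=N_2(n-1)$. In the rotation-$1/2$ case this is transparent: the $-\alpha$ leaf is $\{1/6,5/6\}$, so $[-\alpha,\beta]$ corresponds to the angular region $(5/6,1)\cup(0,1/6)$; doubling sends this bijectively to $(2/3,1)\cup(0,1/3)$, the angular region bounding $[\alpha,\beta]$. Bijectivity holds because both major leaves of the lamination (and the critical diameter they straddle) are contained on the opposite side of the $\alpha$ gap, inside $[\alpha,-\alpha]$, so the dynamics on $[-\alpha,\beta]$ has no critical branching. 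Equivalently, every $N_2$-leaf crosses the critical diameter, and exactly one of its two ``crossed'' sibling preimages lands in $[-\alpha,\beta]$ while the other lands inside the $\alpha$ gap. Combining $M(n)=N_2(n-1)$ with the decomposition gives the recurrence $N_2(n)=N_1(n)+N_2(n-1)$.

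Iterating the recurrence expresses $N_2(n)$ as a cumulative sum $\sum_{k=1}^{n} N_1(k)$ plus a bounded initial term. The sandwich $N_1(n)\le N_2(n)\le n\cdot \max_{k\le n} N_1(k)+\text{const}$ then forces
\[
\limsup_n \tfrac{1}{n}\log N_2(n) \;=\; \limsup_n \tfrac{1}{n}\log N_1(n),
\]
which is the desired equality of entropies.

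The main obstacle I anticipate is phrasing the bijection uniformly in the rotation-$p/q$ cases with $q>2$: there the $\alpha$ and $-\alpha$ gaps are $q$-gons, and the angular region corresponding to $[-\alpha,\beta]$ consists of several arcs rather than two. The underlying reason for the bijection — that all critical structure (the major leaves and critical diameter) sits on the $[\alpha,-\alpha]$ side, leaving doubling injective on $[-\alpha,\beta]$ and mapping it onto $[\alpha,\beta]$ — remains valid, but the combinatorial description of the regions takes more care.
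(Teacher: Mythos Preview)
Your argument is correct and is essentially the paper's own proof: the paper introduces the chain $\alpha_0=\alpha$, $\alpha_1=-\alpha$, $\alpha_{k+1}=$ the unique preimage of $\alpha_k$ on $[\alpha,\beta]$, observes that each $[\alpha_{k+1},\alpha_{k+2}]$ maps homeomorphically onto $[\alpha_k,\alpha_{k+1}]$, and reads off $N_2(n)=\sum_{k\ge 0}N_1(n-k)$ directly---which is just your recurrence $N_2(n)=N_1(n)+N_2(n-1)$ unrolled. Your anticipated obstacle for $q>2$ evaporates if you argue, as the paper does, at the level of arcs rather than angular regions: all that is needed is that $[-\alpha,\beta]$ contains no major leaf and hence maps homeomorphically onto $[\alpha,\beta]$, which holds for every rotation number.
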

\begin{proof}
Since $-\alpha\in[\alpha,\beta]$, we clearly have $N_1(n)\le N_2(n)$. To show the converse, we claim that $N_2(n) \le N_1(n)+N_1(n-1)+ N_1(n-2)+\dots$.

To see this, denote $\alpha_0:=\alpha$ and, recursively, $\alpha_{k+1}$ to be the unique preimage of $\alpha_k$ on $[\alpha,\beta]$, so $\alpha_1=-\alpha$. Then $[\alpha_{k+1},\alpha_k]$ maps homeomorphically onto $[\alpha_k,\alpha_{k-1}]$ and $[\alpha,\beta]=\bigcup_{k\ge 0}[\alpha_{k},\alpha_{k+1}]$. If $N_{[\alpha_k,\alpha_{k+1}]}(n)$ denotes the number of precritical points of generation $n$ on $[\alpha_k,\alpha_{k+1}]$, then we have $N_{[\alpha_{k+1},\alpha_{k+2}]}(n)=N_{[\alpha_k,\alpha_{k+1}]}(n-1)$ and $N_{[\alpha_{0},\alpha_{1}]}(n)=N_1(n)$ and indeed $N_2(n) = N_1(n)+N_1(n-1)+N_1(n-2)+\dots$.  If $N_1(n)\le C e^{(h+\eps)n}$ for all $n$, then $N_2(n)\le C n e^{(h+\eps)n}$. Therefore, $N_1$ and $N_2$ define the same entropy.
\end{proof}


\begin{proposition}[Bound on Entropy Increase]
\label{Prop:BoundEntropyIncrease} \lineclear
Suppose $c_1\prec c_2$ and $[c_1,c_2]$ is a (combinatorial) arc in $\M$ such that $0\le \tilde h(c_2)-\tilde h(c_1)\le \eps$. Then there is an $s>0$ with the following property: if $[c,c']$ is a dyadic vein of generation at least $s$ that terminates at $c\in [c_1,c_2]$, then $\tilde h(c')-\tilde h(c_2)\le \eps$. \end{proposition}
\reminder{Would it be more consistent to use $\ovl m$ for $s$? }

\begin{proof}
Consider first the case $c\not =0$. Set $h:=\tilde h(c_2)$. There is a $C>0$ so that all $c\in [c_1,c_2]$ satisfy $N_c(n)\le N_{c_2}(n)\le Ce^{(h+\eps/2)n}$ for all $n$ (monotonicity, Lemma~\ref{Lem:Monotonicity}). We may suppose that $s$ is large enough so that $2C\le e^{(h+\eps/2)s}$. Let $s'\ge s$ be the generation of $c'$. 

As before (see Lemma~\ref{Lem:MarkedPointsSubset_new}), let $H'$ be the Hubbard tree of $p_{c'}$, which is dyadic, and let $H$ be the Hubbard tree of $p_c$, which is postcritically finite (as endpoint of a dyadic vein in $\M$), so both Hubbard trees exist and are finite. Since in this proof we count pre-critical points in $[\alpha,\beta]$ we add to $H$ the arc, say $[\beta',\beta]$,  connecting $\beta$ to $H$; denote by 
$H_\beta:= H\cup [\beta',\beta]$ the extended Hubbard tree. Clearly, $H_\beta$ is $p_c$-invariant.

Let $x\in H'$ be the dynamical counterpart of $c$. Recall that $x$ is a characteristic periodic or preperiodic point in the sense that the entire orbit of $x$ is contained in the closure of the component of $H'\sm\{x\}$ that contains $0$ (Lemma~\ref{Lem:DirectlySubordinateDynamics}). 
In particular, the orbit of $x$ is disjoint from $(x,c']$. It follows that any connected component $I$ of $p_{c'}^{\circ (-n)}([x,c'])$ within $H'$ is either contained in $[x,c']$ or intersects it at most in $\{x\}$. (Otherwise, $x$ would be in the interior of $I$ and after $n$ iterates $x$ would be mapped into $(x,c']$, but $H'$ does not have a branch point on $(x,c']$ by Lemma~\ref{Lem:NoExtraBranchPoint}).

We know from Lemma~\ref{Lem:InjectiveDynamicsLastEdge} that
\begin{equation}
p_{c'}^{\circ s'}([x,c']) \subset[\alpha,\beta]
\label{Eq:IterateOf[x,ctilde]}
\end{equation}
and, moreover, the orbit of $p_{c'}^{\circ i}([x,c'])$ for $i\in \{0,1,\dots , s'-1\}$ does not contain $0$.

Before we finish the proof of Proposition~\ref{Prop:BoundEntropyIncrease}, we need to define a few terms, and we need a lemma. 

\looseness+1
By a \emph{maximal preimage of $[x,c'] \subset H'$ of generation $n>0$} we 
mean a connected component $I$ of $p_{c'}^{\circ (-n)}([x,c'])$ so that $I\not\subset p_{c'}^{\circ(-i)}[x,c']$
for every $i\in\{1,2,\dots,n\}$; equivalently, $p_{c'}^{\circ i}(I)\not \subset [x,c']$ for all $i<n$ {(we just proved that this implies that  $p_{c'}^{\circ i}(I)\cap[x,c']\subset\{x\}$} ). 
We denote by $M$ the set of all maximal preimages of $[x,c']$ that are in $[\alpha,\beta]$.

An \emph{itinerary} of $I$ will be a sequence $s_{0}s_1\dots s_{n-2}\in\{\0,\1\}^{n-1}$ where each $s_{i}$ describes the connected component of $H'\setminus \{0\}$ containing $p_{c'}^{\circ i}(I)$ (labeled for instance so that the critical value is in the component with label $\1$). {Our construction assures that every itinerary is well-defined (the immediate preimage of $I$ contains $0$, but further preimages do not because $[x,c')$ does not contain postcritical points, and $c'$ is not periodic). }


\pagebreak

\begin{lemma}
\label{lem:PreCritInterv}
There is an itinerary preserving injection from\nopagebreak
\begin{itemize}\nopagebreak
\item the set $M$ of maximal preimages of $[x,c'] \subset H'$ of generation $n$ to
\item the set of precritical points of $f:H_\beta\to H_\beta$ of generation $n$.
\end{itemize}
Moreover, a maximal preimage of $[x,c'] \subset H'$ belongs to the interval $[\alpha,\beta]\subset H'$ if and only if the corresponding precritical point belongs to $[\alpha,\beta]\subset H_\beta$.
\end{lemma}

\begin{proof}
The map $J:H\to H'$ from Lemma~\ref{lem:HintoH'} extends naturally into $J:H_\beta \to H'$ such that the new map
embeds the sets of vertices and edges of $H_\beta$ into the sets of vertices and edges of $H'$.

Every interval $I\in M$ is uniquely characterized by a sequence $\ovl \tau'=(\tau'_0,\eps_{0},\tau'_1,\eps_{1},\dots, \eps_{n-2},\tau'_{n-1})$ so that $\tau'_i$ is the edge of $H'$ containing $p_{c'}^{\circ i}(I)$ and $\eps_i\in \{\0,\1\}$ describes the connected component of $H'\sm \{0\}$ containing $p_{c'}^{\circ i}(I)$, labeled for instance so that the critical value is in the component with label $\1$. Of course, $\eps_i$ is determined by $\tau_i$ unless  $\tau_i=e'_0$; similarly, $\eps_i$ and $\tau_{i+1}$ uniquely determine $\tau_i$. 

The sequence $\ovl \tau'$ is subject to the following conditions:
\begin{itemize}
\item
$p_{c'}(\tau'_i)\supset \tau'_{i+1}$ \;;
\item
all $\tau_i\neq [c',x]$ (by the condition of ``maximal preimage'') \;;
\item
$\tau_0\subset [\alpha,-\alpha]$\;; and \item $\tau'_{n-1}=e'_0$.
\end{itemize}
 
 Since all $\tau_i\neq [c',x]$ and  $\tau_0\subset [\alpha,-\alpha]$, all $\tau_i$ are in the image of $J$. Define the sequence 
\[
\ovl \tau=J^{-1}(\ovl \tau'):= (J^{-1}(\tau'_0), \eps_0, J^{-1}(\tau'_1), \eps_1,\dots , J^{-1}(\tau'_{n-1})).
\] 
By Lemma~\ref{lem:HintoH'}, $ J^{-1}(\tau'_{n-1})=e_0$ while $J^{-1}(\tau'_0)\subset [\alpha,-\alpha]$.
 
 Since $e_0$ contains the critical point (in its interior or in its boundary) the sequence $\ovl \tau$ determines a unique relevant pre-critical point $\ell$ such that $p_c^{i}(\ell)$ is contained in the intersection of $\tau_i$ with component of $H_\beta\setminus \{0\}$ labeled by $\eps_i$. It is straightforward that a different choice of $I$ leads to a different choice of $\ovl \tau'$, which leads to a different choice of $\ell$. And the itinerary of $I$, which is $\eps_0\eps_1\dots \eps_{n-2}$, is preserved.
\end{proof}


{Now we continue the proof of Proposition~\ref{Prop:BoundEntropyIncrease}}.
By Lemma~\ref{lem:PreCritInterv} the number of intervals in $M$ of generation $n$ that are in $[\alpha,\beta]$ is bounded above by $Ce^{(h+\eps/2)n}$.

In order to bound the number of relevant precritical points of any generation $n$ in the dynamics of $p_{c'}\colon H'\to H'$ (these are, by definition, the precritical points in $[\alpha,\beta]$), consider any relevant precritical point $\ell$ of generation $n$. 
Let $j_1<j_2<\dots <j_m=n$ be the set of all iterates so that $p_{c'}^{\circ j_i}(\ell)\in [x,c']$.
Then  $\ell$, $p_{c'}^{\circ (j_1+s')}(\ell)$, $p_{c'}^{\circ(j_2+s')}(\ell)$,\dots , $p_{c'}^{\circ(j_{m-1}+s')}(\ell)$ are within $[\alpha,\beta]$; compare \eqref{Eq:IterateOf[x,ctilde]}. This also implies that $j_{i+1}-j_i\ge s'$. 

Let $I_0,I_1,\dots , I_{m-1}\in M$ be the unique intervals in $M$ containing, respectively, 
$\ell, p_{c'}^{\circ(j_1+s')}(\ell),p_{c'}^{\circ(j_2+s')}(\ell),\dots , p_{c'}^{\circ(j_{m-1}+s')}(\ell)$; their respective generations are $j_1, j_2-j_1-s',\dots , j_{m}-j_{m-1}-s'$. 
We claim that $\ell$ has itinerary  $\s=s_0s_1s_2\dots s_{n-2}$ of $\ell$ as follows:
\begin{itemize}
\item $s_0s_1\dots s_{j_1-2}$ is the itinerary of $I_0$;
\item $s_{j_i}s_{j_i+1}\dots s_{j_i+s'-1}$ is the kneading sequence of $c'$; 
\item $s_{j_{i}+s'}s_{j_{i}+s'+1}\dots s_{j_{i+1}-2}$ is the itinerary of $I_i$; and
\item all $s_{j_{i}-1}$ are arbitrary in $\{\0,\1\}$.
\end{itemize} 
We justify this as follows: 
$p_{c'}^{\circ(j_1-1)} $ maps $I_0$ homeomorphically, while  $p_{c'}^{\circ j_1}\colon I_0\to [x,\tilde c]$ is a $2:1$-map, so the first $j_1-2$ iterates do not contain $0$ and all points in $I_0$ have the same entries in their itineraries up to entry number $j_1-2$. 

Since $p_{c'}^{\circ j_i}(\ell)\in [x,c']$, the next iterates are the same as for $[x,c']$ and, in particular, for $c'$, hence equal to the kneading sequence of $c'$, at least before $c'$ lands at the $\beta$ fixed point, that is for $s'-1$ iterations. 

The iterate $p_{c'}^{\circ (j_i+s')}(\ell)$ is by definition in $I_i$, and this interval travels forward homeomorphically until it covers $0$, which is the iteration before it reaches $[x,c']$ the next time; since the latter is at iterate $j_{i+1}$, the itinerary of $\ell$ coincides with that of $I_i$ until position $j_{i+1}-2$ (analogous to the beginning). 
In the subsequent iterate, the image interval $p_{c'}^{\circ (j_{i+1}-j_i-1)}(I_i)$ contains $0$, so both entries in the itinerary are possible.

Now consider the set of all precritical points in $[\alpha,\beta]$ of generation $n$ corresponding to a particular combinatorial pattern $(j_1,j_2,\dots,j_m,n)$.
We just showed that in order to determine the itinerary of $\ell$ we only need to specify $s_{j_1-1},s_{j_2-1},\dots , s_{j_m-1}\in \{\0,\1\}$ as well as the intervals $I_0,I_1,\dots , I_{m-1}$ as above; their numbers we estimated {in Lemma~\ref{lem:PreCritInterv}}. Therefore, the total number of precritical points with pattern $(j_1,j_2,\dots,j_m,n)$ is at most 
\[
2 C e^{(h+\eps/2)(j_1-1)} \left(\prod_{i=1}^{m-1} 2C e^{(h+\eps/2)(j_{i+1}-j_i-s'-1)}\right)\le2C e^{(h+\eps/2)n}
\]
because $2Ce^{-(h+\eps/2)s'}\le 1$ by hypothesis.

Since the number of combinatorial patterns is at most $e^{(n/s')\log(s'+1)}$ (Lemma~\ref{Lem:CombinatorialPatterns}), it follows that $N_{\tilde c}(n) \le 2 C e^{n(h+\eps/2+\log(s'+1)/s')}$.

Therefore
\begin{align*}
\tilde h(\tilde c) 
&\le \limsup_n \frac{1}{n}\left(\log 2 +\log C + n (h+\eps/2)+\frac{n}{s'}\log(s'+1) \rule{0pt}{11pt} \right)  
\\
&\le h+\eps/2+ \frac{\log (s'+1)}{s'} \le h+\eps
\;
\end{align*}
if $s'\ge s$ is sufficiently large.

Consider now the case $c=0$. We claim that $h(c')<\varepsilon$ if $s$ is sufficiently big. Denote by $K'$ the filled in Julia set of $p_{c'}$. Suppose that $K'\setminus \{\alpha\}$ consists of $q$ connected components (this is equivalent to $c'$ being in the primary $p/q$ limb of the Mandelbrot set for some $p$ coprime with $q$), we enumerate them as $K_1,K_2,\dots ,K_{q}$ such that $K_1$ contains the critical value, $K_q$ contains the critical point, and $p_{c'}$ maps $K_i$ homeomorphically onto $K_{i+1}$ for all $i<q$. Observe that $\beta\in K_q$ and there is a unique $c_i\in p_{c'}^{q-i}(\beta)$ such that $c_i\in K_i$. Since $c'$ is the dyadic endpoint of smallest generation in a limb of $c=0$ we see that $c_1$ is the critical value of $p_{c'}$, in particular $q\ge s$. Thus $H'=\cup _{i\le q} [\alpha, c_i]$ is a star-like tree. The associated transition matrix of $p_{c'}\colon H'\to H'$ is 
\[M=\left(\begin{matrix}
    0      & 1 & 0 &0 & \dots &0 &0 \\
    0       & 0 & 1&0 & \dots &0 & 0 \\
    0       & 0 & 0&1 & \dots &0 & 0 \\
    \hdotsfor{7} \\
   0      & 0& 0 &0 & \dots & 0 &1\\
   2      & 0& 0 &0 & \dots & 0 &1
\end{matrix} \right)\]
It is easy to see that if $q\ge s$ is big enough, then the leading eigenvalue of $M$ is close to $1$, thus its logarithm is close to $0$.  
\end{proof}


\begin{theorem}[Continuity of Entropy at Hyperbolic Components]
\label{Thm:ContEntropyHypComps} \lineclear
The core entropy function $h$ is continuous at all angles that are associated to hyperbolic components of $\M$.
\end{theorem}
\begin{proof}
Let $W$ be a hyperbolic component of $\M$ of some period $n$ and suppose an angle $\theta$ is associated to $W$, in the sense that the parameter ray at angle $\theta$ lands at $\partial W$ (in fact, all we are using is that the ray accumulates at $\partial W$; we are not assuming the known fact that all such rays actually land). 

Let $c_1$ be the root of $W$ and $c_2$ the bifurcation point in $\partial W$ of the period $2n$ component. The combinatorial arc $[c_1,c_2]$ consists of the two internal rays of $W$ connecting the center, say $c_0$, to $c_1$ and to $c_2$. 

Every sublimb $L$ of $W$ has a leading dyadic, say $c_L$, at which the entropy within $L$ is maximal (Theorem~\ref{Thm:TiozzoConj}). Unless $L$ is the $1/2$-limb, the vein of $c_L$ terminates at $c_0$, so for given $\eps>0$, by Proposition~\ref{Prop:BoundEntropyIncrease} there are only finitely many limbs of $W$ in which the entropy exceeds $\tilde h(c_1)+\eps$. This immediately implies continuity of $h$ at all irrational angles $\theta$ associated to $W$. 

If $\theta$ is a rational angle associated to $W$, then either $c(\theta)$ is the root of $W$, or $c(\theta)$ is the parameter where $W$ bifurcates to a component $W'$ with period a proper multiple of $n$. In the latter case,  Proposition~\ref{Prop:BoundEntropyIncrease} implies continuity of $h$ at $\theta$ among all rays that are not in the wake of $W'$. The same argument, applied to $W'$, implies continuity at $\theta$ among all rays in the wake of $W'$. 

We finally have to discuss the case that $\theta$ lands at the root of $W$. Continuity among rays in the wake of $W$ is handled once again as before. 

The only case left is when $W$ is a primitive hyperbolic component of period $n\ge 2$ (in the period $n=1$ case every ray is in the wake of $W$). Here we use the fact that entropy is continuous along the combinatorial arc from $0$ to $W$ \cite[Theorem~4.9]{Jung}, so there is a postcritically finite parameter $c_3\prec c_1$ with $\tilde h(c_3)\ge \tilde h(c_1)-\eps$. By Proposition~\ref{Prop:BoundEntropyIncrease} there are at most finitely many veins ending at $[c_3,c_1]$ with entropy variation greater than $\eps$. Therefore, $\theta$ is continuous among all rays outside of the wake of $W$.
\end{proof}

\begin{theorem}[Continuity of Entropy Near Veins]
\label{Thm:ContinuityNearVeins} \lineclear
Suppose $\theta\in\Circle$ is such that topological entropy is continuous along the (combinatorial) vein $[0,c(\theta)]$ connecting the parameters $0$ to $c(\theta)$ in $\M$. Then $\tilde h$ is continuous for all parameters on $[0,c(\theta)]$, and $h$ is continuous at all angles $\phi$ that correspond to parameters on $[0,c(\theta)]$.
\end{theorem}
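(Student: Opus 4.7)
The plan is to combine the three tools already in place: monotonicity of $\tilde h$ under $\prec$ (Lemma~\ref{Lem:Monotonicity}), the Tiozzo conjecture as proved in Corollary~\ref{Cor:TiozzoConj}, and the quantitative bound of Lemma~\ref{Lem:BoundEntropyIncrease}, together with the hypothesis of continuity of $\tilde h$ along $[0,c(\theta)]$. Since $\tilde h=h\circ\pi$ with $\pi\colon\M\to\M_{abs}$ continuous, it suffices to show that $\tilde h$ is continuous at every $c_0\in[0,c(\theta)]$ as a function on the combinatorial Mandelbrot set $\M_{abs}$.

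Fix $c_0$ on the vein and $\eps>0$. My first step will be to use the hypothesis together with monotonicity to pick $c_1\prec c_0\prec c_2$ on $[0,c(\theta)]$ with $\tilde h(c_2)-\tilde h(c_1)<\eps/3$, and then apply Lemma~\ref{Lem:BoundEntropyIncrease} to the pair $(c_1,c_2)$ with tolerance $\eps/3$ to obtain a generation threshold $s$ such that every dyadic parameter $c'$ whose dyadic vein has generation at least $s$ and terminates inside $[c_1,c_2]$ satisfies $\tilde h(c')\le\tilde h(c_2)+\eps/3$.

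In the second step I will show that every parameter $c''$ combinatorially close to $c_0$ lies either on the vein segment $[c_1,c_2]$, where the hypothesis directly yields $|\tilde h(c'')-\tilde h(c_0)|<\eps/3$, or else in a subwake attached at some point $c_*\in[c_1,c_2]$. In the latter case Corollary~\ref{Cor:TiozzoConj} tells us that the entropy on this subwake is maximized at the dyadic $c'_{dy}$ of lowest generation inside it, so provided the generation of $c'_{dy}$ is at least $s$, the step-one bound yields $\tilde h(c'')\le\tilde h(c'_{dy})\le\tilde h(c_2)+\eps/3\le\tilde h(c_0)+\eps$; monotonicity gives the matching lower bound $\tilde h(c'')\ge\tilde h(c_*)\ge\tilde h(c_1)\ge\tilde h(c_0)-\eps/3$.

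Since only finitely many dyadic parameters have generation below $s$, only finitely many ``bad'' attachment points can occur on $[c_1,c_2]$. Shrinking the combinatorial neighborhood of $c_0$ removes all of them except possibly $c_0$ itself. The hard part will be controlling the remaining case in which $c_0$ itself supports subwakes with low-generation dyadics: here the single application of Lemma~\ref{Lem:BoundEntropyIncrease} on $[c_1,c_2]$ does not suffice, and I expect the argument to proceed by iterating, restricting attention within each such sublimb to combinatorial neighborhoods of $c_0$ that lie strictly below (in $\prec$) any low-generation dyadic of that sublimb. Any parameter $c''$ in such a small sub-neighborhood is then bounded above only by dyadics whose dyadic veins attach at $c_0$ with generation at least $s$, so Lemma~\ref{Lem:BoundEntropyIncrease} still applies and gives the same estimate. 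Assembling the bounds from all sublimbs and from the vein then produces $|\tilde h(c'')-\tilde h(c_0)|<\eps$ throughout a combinatorial neighborhood of $c_0$, establishing continuity as desired.
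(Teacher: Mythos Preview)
Your overall architecture is exactly the paper's: pick a short vein segment $[c_1,c_2]$ through $c_0$ with small entropy variation, invoke Lemma~\ref{Lem:BoundEntropyIncrease} to control dyadic veins attaching to it, use Corollary~\ref{Cor:TiozzoConj} to pass from dyadics to arbitrary parameters, and excise the finitely many bad subwakes. The paper also phrases the result as producing a ``reduced wake'' $W'=W\setminus\bigcup_i\overline{W_i}$ on which the entropy variation is at most $2\eps$, which is just a repackaging of your steps.

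Where your proposal has a genuine gap is the ``hard part''. Your iteration asserts that after restricting inside a bad sublimb $L$ attached at $c_0$, parameters there are bounded above by dyadics whose veins attach at $c_0$ with generation $\ge s$. That is not so: in each sublimb $L$ of $c_0$ there is exactly \emph{one} dyadic whose vein terminates at $c_0$, namely the dyadic $d_L$ of least generation in $L$ --- and by assumption its generation is below $s$. Every other dyadic in $L$ has its vein terminating at a branch point strictly inside $L$, on the arc from $c_0$ toward $d_L$. To bound those via Lemma~\ref{Lem:BoundEntropyIncrease} you would need entropy control along that arc, but the hypothesis only furnishes control along $[0,c(\theta)]$, which does not enter $L$ (unless $L$ is the sublimb containing $c(\theta)$). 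So the iteration does not reduce to a known case.

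The paper sidesteps this by a case analysis on the type of $c_0$ rather than by iterating. If $c_0$ lies on the boundary of a hyperbolic component $H$, one takes $c_a$ and $c_b$ to be the root and the period-doubling point of $H$; entropy is \emph{constant} on $[c_a,c_b]$, so the variation is zero and, crucially, every dyadic vein terminating on $[c_a,c_b]$ does so at the center of $H$, not at $c_0$ itself --- hence the removed subwakes never touch the angles of $c_0$. The Misiurewicz--Thurston case is handled by running the whole argument once per branch. You should also treat separately the case $c_0=c(\theta)$, where no $c_2\succ c_0$ exists on the given vein; the paper simply sets $c_b=c(\theta)$ there and works one-sidedly.
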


\begin{remark}
It may be helpful to explain the statement. Let $\gamma\colon[0,1]\to \C$ be a parametrization of the (combinatorial) arc $[0,c(\theta)]$. Then the hypothesis says that $\tilde h(\gamma(t))$ is continuous for $t\in[0,1]$ (only considering parameters along the arc). The conclusion is that then $\tilde h\colon\M\to[0,\log 2]$ is continuous at $\gamma(t)$ for all $t$ (where $\gamma(t)$ is now viewed as an element of $\C$, not just of the arc). Note that this hypothesis is known to be true for all angles $\theta\in\Circle$, except when $c(\theta)$ is an endpoint of $\M$ at an irrational angle \cite[Theorem~4.9]{Jung}; we will treat the missing case in Section~\ref{Sec:IrratEndpoints}. 
\end{remark}

\begin{proof}
We start the proof with an auxiliary consideration that does not involve the angle $\theta$. Suppose there are two parameters $c_a\prec c_b\in\M$ with $0\le \tilde h(c_b)-\tilde h(c_a)\le\eps$; we allow $c_b$ to be a (combinatorial)  endpoint of $\M$.

Denote by $\text{wake}(c_a)$ the open wake of $c_a$: this is the set of all parameters in $\M$ that are separated from $0$ by two parameter rays landing at (or accumulating at) $c_a$. If $c_a$ is a Misiurewicz-Thurston-parameter, then we set $\text{wake}(c_a)$ to be the subwake of $c_a$ containing $c_b$. Similarly $\text{wake}(c_b)$ is defined; if $c_b$ is a combinatorial endpoint of the Mandelbrot set, then $\text{wake}(c_b)=\emptyset$. Set $W:=\ovl{\text{wake}(c_a)}\sm\text{wake}(c_b)$. By Proposition~\ref{Prop:BoundEntropyIncrease} there are at most finitely many dyadic veins $[c_i,c'_i]$ with $c_i\in [c_a,c_b]$ such that the entropy variation along $[c_i,c'_i]$ exceeds $\eps$. We may suppose that $(c_i,c'_i]\cap [c_a,c_b]=\emptyset$, possibly by replacing $[c_i,c'_i]$ with the closure of $[c_i,c'_i]\sm[c_a,c_b]$.

We will construct a ``reduced wake'' $W'\subset W$ in which the entropy variation is at most $2\eps$. 

By the Branch Theorem \cite{Orsay}, \cite[Theorem~3.1]{MandelBranch}, the points $c_i$ are either Misiurewicz-Thurston parameters or centers of hyperbolic components. In both cases, we will exclude a subwake at $c_i$ from $W$ where the entropy variation is large.
 
If $c_i$ is a Misiurewicz-Thurston-parameter, let $W_i$ be the subwake of $c_i$ containing $c'_i$ and thus $(c_i,c'_i]$. If $c_i$ is the center of a hyperbolic component, say $H_i$, then let $W_i$ be the subwake of $H_i$ that contains $c'_i$ (the root of this wake is a bifurcation parameter on $\partial H_i$).  In both cases, $W_i$ does not contain $c_b$.

Set $W':=W\sm \bigcup_i \ovl{W_i}$ {(recall that the union is finite)}. The external angles corresponding to rays in $W'$ occupy finitely many intervals, and the maximal entropy of these angles occurs either at an interior point or at an endpoint. In the first case, the maximum is at a dyadic angle by Theorem~\ref{Thm:TiozzoConj} (the Tiozzo Conjecture), and our construction is such that this maximum is at most $\tilde h(c_a)+2\eps$. In the second case, it occurs at an angle corresponding to some $c_i\in[c_a,c_b]$ with entropy $\tilde h(c_i)\le \tilde h(c_b)\le \tilde h(c_a)+\eps$
(if $c_i$ is the center of a hyperbolic component, then the ray at the angle with entropy maximum does not land at $c_i$, but at a parabolic boundary point of the same hyperbolic component, with equal entropy).
Therefore, for all parameter rays $R(\phi)\subset W'$ we have $\tilde h(c_a)\le h(\phi)\le \tilde h(c_a)+2\eps$.

Now we start the actual proof: consider a parameter $c\in[0,c(\theta)]$ with $c=c(\phi)$ for some $\phi\in\Circle$. We claim that $\tilde h$ is continuous at $c$ and that $h$ is continuous at $\phi$. Fix $\eps>0$.

We first consider the case that $c(\theta)$ is a combinatorial endpoint and $c=c(\theta)$. We prove continuity of $\tilde h$ at $c(\theta)$ and of $h$ at $\theta$. By hypothesis, entropy is continuous along the combinatorial vein $[0,c(\theta)]$, so there is a $c_a\in\M$ with $0\prec c_a\prec c(\theta)$ and $0\le \tilde h(c(\theta)) -\tilde h(c_a) \le \eps$. Now using the argument from above we construct a reduced wake $W'$ so that all angles $\theta'\in W'$ satisfy $h(\theta')\in[\tilde h(c_a),\tilde h(c_a)+\eps]$.
These angles form a neighborhood of $\theta$, while $W'$ is a neighborhood of $c$. 
This completes the proof when $c=c(\theta)$ is a combinatorial endpoint.

If $c=c(\theta)$ is not a combinatorial endpoint, then we can extend the combinatorial arc $[0,c(\theta)]$, so from now on it suffices to assume that $c\in(0,c(\theta))$, possibly by replacing $\theta$ by a different angle.

The second case is that $c$ is neither a Misiurewicz-Thurston-parameter nor on the boundary of a hyperbolic component. In this case, we may choose an arc $(c_a,c_b)\ni c$ (i.e., $c_a\prec c\prec c_b$) with $0\le \tilde h(c_b)-\tilde h(c_a)\le 2\eps$ and proceed as above. The assumptions on $c$ mean that $c\not\in\partial W_i$, so $c$ is still an interior point of $W'$, and we conclude that $\tilde h$ is continuous at $c$ and $h$ is continuous at $\phi$. 

If $c(\phi)$ is a Misiurewicz-Thurston-parameter, then there are finitely many branches, and the previous argument works separately for all the individual branches. 

The final case is that $c(\phi)$ is on the boundary of a hyperbolic component --- and that case was handled in Theorem~\ref{Thm:ContEntropyHypComps}.
\end{proof}

\begin{theorem}[Continuity of Entropy]
\label{Thm:Continuity} \lineclear
Core entropy $h\colon\Circle\to[0,\log 2]$ is continuous.
\end{theorem}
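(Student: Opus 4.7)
The plan is to invoke Theorem~\ref{Thm:ContinuityNearVeins}, which reduces continuity of $h$ at $\theta$ to continuity of $\tilde h$ along the combinatorial vein $[0, c(\theta)]$. It therefore suffices to verify this vein-continuity hypothesis at every $\theta \in \Circle$.

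The verification splits into two cases. In the first case, $c(\theta)$ is not a combinatorial endpoint of $\M$ at an irrational angle. Then continuity of $\tilde h$ along $[0, c(\theta)]$ is established in \cite[Theorem~4.9]{Jung}, and Theorem~\ref{Thm:ContinuityNearVeins} yields continuity of $h$ at $\theta$. This case covers all rational angles, as well as all irrational angles whose parameter rays land (combinatorially) at non-tip points of $\M$ (such as parameters on the boundaries of hyperbolic components or Misiurewicz--Thurston parameters).

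The second case, where $c(\theta)$ is a combinatorial endpoint of $\M$ at an irrational angle, is the subject of Section~\ref{Sec:IrratEndpoints}. The strategy is to exploit monotonicity together with the Tiozzo maximum principle. By Lemma~\ref{Lem:Monotonicity}, $\tilde h$ is monotone non-decreasing along $[0, c(\theta)]$ and, by the first case, continuous at every interior point of the vein; the only possible discontinuity is therefore a jump at the endpoint. Writing $\ell := \lim_{c \prec c(\theta),\, c \to c(\theta)} \tilde h(c)$, one has $\ell \le \tilde h(c(\theta))$, and the objective is to rule out strict inequality. To this end, I would choose a nested sequence of wakes $W_n \ni \theta$ that shrink to $\theta$, bounded by rational parameter ray pairs landing at points $c_n$ on $[0, c(\theta)]$ with $c_n \to c(\theta)$. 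By Corollary~\ref{Cor:TiozzoConj}, the maximum of $h$ on $\overline{W_n}$ is attained at the unique dyadic $d_n \in W_n$ of lowest denominator, whence $h(d_n) \ge \tilde h(c(\theta))$; and since $W_n$ shrinks to the single angle $\theta$, the generations of the $d_n$ tend to infinity. A careful application of Lemma~\ref{Lem:BoundEntropyIncrease} to sub-arcs of $[0, c(\theta)]$ on which $\tilde h$ varies by less than a prescribed $\eps$ will then give $h(d_n) \le \tilde h(c_n) + \eps$ for all sufficiently large $n$. Since $\tilde h(c_n) \to \ell$ and $h(d_n) \ge \tilde h(c(\theta))$, passing to the limit forces $\tilde h(c(\theta)) \le \ell + \eps$ for every $\eps > 0$, hence $\ell = \tilde h(c(\theta))$; Theorem~\ref{Thm:ContinuityNearVeins} then delivers continuity at $\theta$.

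The main obstacle is the combinatorial bookkeeping in the last step: one must ensure that the Tiozzo-maximal dyadics $d_n$ of the shrinking wakes around the irrational tip $\theta$ sit at the ends of dyadic veins rooted on the approaching arc $[c_n, c(\theta)]$, with generation high enough for the threshold $s$ produced by Lemma~\ref{Lem:BoundEntropyIncrease} to be met. Establishing this relation between the Tiozzo-selected dyadics and the vein structure approaching an irrational endpoint is the novel content deferred to Section~\ref{Sec:IrratEndpoints}; once in place, the squeeze argument above rules out a jump and the theorem follows.
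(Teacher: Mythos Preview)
Your high-level reduction matches the paper exactly: invoke Theorem~\ref{Thm:ContinuityNearVeins}, dispose of the non-irrational-endpoint case via \cite[Theorem~4.9]{Jung}, and defer the irrational-endpoint case to Section~\ref{Sec:IrratEndpoints}. You also correctly isolate the key inequality needed there, namely $\tilde h(c'_i)-\tilde h(c_i)\to 0$ for the approximating dyadics $c'_i$ and branch points $c_i\to c(\theta)$; this is precisely Theorem~\ref{thm:ContAtIrrNonRen}.

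Where your proposal breaks down is the claim that this inequality follows from ``a careful application of Lemma~\ref{Lem:BoundEntropyIncrease}''. It does not, and the obstruction is not combinatorial bookkeeping but a genuine uniformity problem that turns the argument circular. In Lemma~\ref{Lem:BoundEntropyIncrease} the threshold $s$ is chosen so that $2C\le e^{(h+\eps/2)s}$, where $C$ comes from the bound $N_{c_2}(n)\le C e^{(h+\eps/2)n}$ with $h=\tilde h(c_2)$. To cover the vein of your dyadic $d_n$ you must take $c_2=c_i$ for $i\to\infty$ (since the termination points $c_i$ converge to $c(\theta)$ and escape any fixed $c_2\prec c(\theta)$), so you need a \emph{uniform} constant $C$ with $N_{c_i}(n)\le C e^{(\ell+\eps/4)n}$ for all large $i$. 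But by monotonicity $N_{c_i}(n)\le N_{c(\theta)}(n)$, and if there were a jump $\tilde h(c(\theta))>\ell$ then $N_{c(\theta)}(n)$ grows at rate $\tilde h(c(\theta))>\ell+\eps/4$ for small $\eps$; one checks that a uniform such $C$ would force $\tilde h(c(\theta))\le \ell+\eps/4$, which is exactly what you are trying to prove. So the squeeze via Lemma~\ref{Lem:BoundEntropyIncrease} assumes its own conclusion.

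This is why Section~\ref{Sec:IrratEndpoints} does \emph{not} do what you describe. It does not arrange for Lemma~\ref{Lem:BoundEntropyIncrease} to apply; instead it builds automata $A\hookrightarrow A'$ for the pair $(c_j,c'_j)$, decomposes precritical paths into detours, and proves a uniformity statement (Proposition~\ref{prop:Uniformity}) whose thresholds $\bar g,\bar m,\bar\kappa$ depend only on $\eps$ and not on the position along the vein. The $\kappa$-stratification (Lemmas~\ref{lem:MonotIncl}--\ref{lem:SmallKappa}) then lets one cover $S'_{i''}(n)$ by finitely many pieces each controlled at some earlier level $j$, and \emph{this} is what yields $h'_i-h_i\le\eps$ without circularity. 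Your outline is sound, but the mechanism you propose for the endpoint case must be replaced by the actual content of Section~\ref{Sec:IrratEndpoints}.
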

\begin{proof}
If $\phi\in\Circle$ is such that $c(\phi)$ is on some combinatorial arc $[0,c(\theta)]$ along which entropy is continuous, then $h$ is continuous at $h$: this is the content of Theorem~\ref{Thm:ContinuityNearVeins}. By work of Tiozzo~\cite{TiozzoThesis} and Jung \cite[Theorem~4.9]{Jung}, this is true for all dyadic angles $\theta$. 

This proves continuity of $h$ at all angles $\phi$ except when $\phi$ corresponds to a combinatorial endpoint of $\M$ at irrational angle, or when $\phi$ corresponds to a boundary point of a hyperbolic component at irrational angle. 

The second case, irrational boundary points of hyperbolic components, has been treated in Theorem~\ref{Thm:ContEntropyHypComps}.

The first case will be taken care of in Corollary~\ref{cor:RadialContin}: there we will prove continuity of entropy along all combinatorial arcs $[0,c(\phi)]$ for all irrational endpoints of $\M$, and then the claim follows as above from Theorem~\ref{Thm:ContinuityNearVeins}. 
\end{proof}

\begin{remark}
Recall from Section~\ref{Sec:Definitions} that continuity of $h\colon\Circle\to[0,\log 2]$ implies continuity of $\tilde h\colon\M\to[0,\log 2]$ (there is a natural continuous projection from $\M$ to the ``abstract Mandelbrot set'', and $h$ is naturally defined on the latter, so $\tilde h$ is the composition of two continuous maps).
\end{remark}


\reminder{Current number of reminders: \arabic{treminder}}


\section{Irrational endpoints}
\label{Sec:IrratEndpoints}

In this section we prove that for every combinatorial endpoint $c$ of $\M$, entropy is continuous along the combinatorial arc $[0,c]$. This is known when $c$ is postcritically finite \cite[Theorem~4.9]{Jung}, but we need it in all cases. This proof provides the missing step in the continuity proof in Theorem~\ref{Thm:Continuity}. {In fact, we prove the result somewhat more generally for endpoints that are non-dyadic, whether or not they are irrational.}

We will approximate the non-dyadic endpoints by dyadic ones, and of course we need uniform estimates for the latter (Proposition~\ref{prop:Uniformity}).

\subsection{Hubbard Trees, Automata, and Renormalization}
\label{Sub:TreesAutomataRenormalization}
Consider a dyadic endpoint $c'$ of $\M$ with external angle ${q}/{2^m}$ and Hubbard tree $H'$. Then the critical value and all further postcritical points are endpoints of $H'$, so vertices of $H'$ are either endpoints or branch points. As in Lemma~\ref{Lem:MarkedPointsSubset_new} we add $\alpha$ and $-\alpha$ to the vertex set of $ H'$ (if necessary).   Easy calculations show that $H'$ has
\begin{itemize}
\item $m+1$ endpoints;
\item at most $m-1$ branch points; 
\item at most $2m+2$ vertices  (including $\alpha$ and $-\alpha$); and 
\item at most $2m+1$ edges. 
 \end{itemize}  

 Let $c$ be the postcritically finite parameter where the vein of $c'$ terminates.  We will assume that $c\not=0$ so that we are in the setting of Lemma~\ref{Lem:MarkedPointsSubset_new}. We denote by $H$ the Hubbard tree of $c$; note that $\beta \not\in H$. 
Denote by $x\in H'$ the dynamic counterpart of $c$ as in Definition~\ref{Def:DynamCounterpart}.

\begin{lemma}
The set $\{ p_{c'}^{\circ k}(x): k\ge 0\}\setminus \{\text{vertices of }H'\}$ contains at most $m$ points; equivalently, $p_{c'}^{\circ m}(x)$ is a branch point of $H'$.
\end{lemma}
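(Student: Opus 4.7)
The plan is to locate $y := p_{c'}^{\circ m}(x)$ on the spine $[\alpha,\beta]$ via Lemma~\ref{Lem:InjectiveDynamicsLastEdge}, then to show that $y$ is the landing point of at least three dynamic rays by a non-dyadic angle argument, and finally to identify $y$ as a branch point of $H'$.

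First, by (the analog for general branch parameters of) Lemma~\ref{Lem:DirectlySubordinateDynamics}, the point $x$ is the landing point of at least three dynamic rays of $p_{c'}$, say at angles $\phi_1,\dots,\phi_s$ with $s\ge 3$, which are the characteristic angles of the branch parameter $c$. These angles are not dyadic: a dyadic ray lands on the forward orbit of $\beta$, whereas $x$ lies on a (pre)periodic repelling orbit disjoint from that of $\beta$, using the standing assumption that $c\neq 0$. Applying Lemma~\ref{Lem:InjectiveDynamicsLastEdge} with $\theta=q/2^m$ and $k=m$ then makes $p_{c'}^{\circ m}\colon[x,c']\to[\beta,y]$ a homeomorphism with $y\in[\alpha,\beta]$.

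Second, the $m$-fold angle doubling sends the rays $R(\phi_i)$ at $x$ to rays $R(2^m\phi_i\bmod 1)$ landing at $y$, and these angles remain pairwise distinct modulo $1$: a coincidence $2^m\phi_i\equiv 2^m\phi_j\pmod 1$ would force $\phi_i-\phi_j\in 2^{-m}\Z$, incompatible with the reduced denominator of $\phi_i-\phi_j$ carrying an odd factor $\ge 3$. Hence $y$ is the landing point of at least three distinct dynamic rays, which makes it a branch point of the Julia set.

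Third, I would upgrade ``branch point of the Julia set'' to ``branch point of $H'$'' as follows. Two of the sectors at $y$ correspond to the $H'$-directions toward the endpoint $\beta$ and toward the critical point $0$ (traversing $\alpha$). A third sector must contain a postcritical iterate of $c'$: the $s\ge 3$ characteristic rays of $c$ partition the parameter plane into at least three sublimbs, each containing a dyadic parameter (Branch Theorem), and one then tracks the dyadic orbit $\{2^j\theta'\bmod 1\}_{j=0}^{m-1}$ through the ray portrait of $x$, transferring sector memberships from the parameter plane to the dynamical plane via the Correspondence Theorem~\ref{thm:Corresp}. The ``at most $m$ points'' statement follows from the forward-invariance of the vertex set of $H'$: once $p_{c'}^{\circ m}(x)$ is a vertex (being a branch point), so are all later iterates, confining non-vertex iterates of $x$ to $\{p_{c'}^{\circ k}(x):0\le k<m\}$. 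The equivalence with ``$p_{c'}^{\circ m}(x)$ is a branch point'' is automatic, since $x$'s orbit is disjoint from the critical orbit (its angles are non-dyadic while the critical orbit has dyadic angles), so any vertex this orbit visits must be a branch point rather than an endpoint or a non-branch postcritical vertex.

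The main obstacle will be making the third-sector argument fully rigorous: verifying that the critical orbit of $p_{c'}$ genuinely visits at least three sublimbs of $c$, not merely the two housing $c'$ (at angle $\theta'$) and $\beta$ (at angle $0$). Once this angular distribution is established, the local-homeomorphism property of $p_{c'}^{\circ m}$ at $x$ — valid because $x$'s orbit avoids the critical point over these $m$ steps — transfers the third branch of $H'$ at $x$ to a third branch of $H'$ at $y$, as required.
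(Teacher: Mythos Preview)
Your Steps 1 and 2 are fine (and in fact Step 2 can be shortened: since the orbit of $x$ avoids the critical point, $p_{c'}^{\circ m}$ is a local homeomorphism at $x$, so the number of Julia-set branches is preserved). The genuine gap is Step 3, and your own diagnosis of the obstacle points at the wrong target.

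You propose to show that the critical orbit of $p_{c'}$ visits at least three sectors at $x$. But that is exactly the statement that $x$ is \emph{already} a branch point of $H'$; if you could prove this, the set in the lemma would be empty and there would be nothing to iterate. You give no argument for it beyond ``track the dyadic orbit through the ray portrait of $x$'', and there is no reason the angles $2^j\theta'$ must land in a sector of $x$ other than the two containing $\theta'$ and $0$. Your closing sentence then says the local homeomorphism would ``transfer the third branch of $H'$ at $x$ to $y$'' --- but this presupposes the very thing you have not established. Working directly at $y$ does not help either: knowing $y\in(\beta,\alpha)$ only gives two $H'$-branches (toward $\beta$ and toward $\alpha$), and you have no handle on whether the third Julia-set branch at $y$ lies in $H'$.

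The paper's proof supplies precisely the missing witness. It brings in the dyadic $\tilde c$ with $c'\lhd\tilde c$, of generation $n<m$ and angle $\tilde q/2^n$. By Lemma~\ref{Lem:DirectlySubordinateDynamics} the rays at $x$ separate $R(0)$, $R(q/2^m)$ and $R(\tilde q/2^n)$, so $x$ has a third branch in the \emph{enlarged} tree $H'\cup[x,z(\tilde q/2^n)]$ --- not in $H'$ itself. The point is that this extra arc is contained in $T_n$, and the inclusion $p_{c'}^{-1}(T_{k-2})\subset T_{k-1}$ from \eqref{eq:T_k-2InT_k-1} yields $p_{c'}^{\circ n}(T_n)\subset H'$. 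Hence after $n$ iterations the third branch has been absorbed into $H'$, and $p_{c'}^{\circ n}(x)$ is a branch point of $H'$ (with $n<m$, so the stated bound follows). The idea you are missing is this explicit third branch coming from $\tilde c$, together with the observation that its generation $n<m$ is exactly what makes it fold into $H'$ in time.
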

\begin{proof}
Let $\tilde c$ be the dyadic parameter with $c'\lhd \tilde c$, so $c'$ is directly subordinate to $\tilde c$. Then $c\prec c'$ and $c\prec \tilde c$.
Denote by $\tilde q/ 2^n$ the external angle of $\tilde c$; note that $n<m$.

We will work in the dynamical plane of $p_{c'}$. 
The rays landing at $x$ separate $R(0), R(\tilde q/2^n)$, and $R(q/2^m)$ (Lemma~\ref{Lem:DirectlySubordinateDynamics}; the point $x$ is denoted $x_*$ there), so $x$ has at least three branches in $H'\cup [x(\tilde q/ 2^n),x]$, where $x(\tilde q/ 2^n)$ denotes the landing point of $R(\tilde q/ 2^n)$.

We claim that $p_{c'}^{\circ n}[x(\tilde q/ 2^n),x]\subset H'$. 
Indeed, {for $k\ge 0$} let $T_k$ be the minimal tree connecting the $\alpha$ fixed point to all dyadic endpoints of generation at most $k$. Since endpoints of $p_{c'}(T_{k-1})$ are endpoints of $T_{k-2}$ as well as the critical value, we have $p_{c'}(T_{k-1})\subset T_{k-2}\cup H'$ and, by induction, indeed $p_{c'}^{\circ n}(T_{n})\subset H'$.

Therefore, $p_{c'}^{\circ n}(x)$ has at least $3$ branches in $H'$ because $p_{c'}^{\circ n}:H' \cup [x(\tilde q/ 2^n),x]\to H'$ is locally injective near $x$. Since $m>n$, also $p_{c'}^{\circ m}(x)$ is a branch point (and also $p_{c'}^{\circ (m-1)}(x)$).
\end{proof}

\goodbreak
  
Let us now refine $H'$ by adding the finite set $\{ p_{c'}^{\circ k}(x): k\ge 0\}$ to its vertex set. The new tree, still called $H'$, has 
\begin{itemize}
\item at most $3m+2$ vertices; and 
\item at most $3m+1$ edges.
 \end{itemize}

\begin{corollary}
\label{cor:EdgesOfH}
 The tree $H$ has 
\begin{itemize}
\item at most $2m+1$ vertices; and 
\item at most $2m$ edges.
 \end{itemize}   
\end{corollary}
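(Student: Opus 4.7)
The plan is to transfer the desired counts from $H'_*$ to $H$ via the tree isomorphism $J\colon H\to H'_*$ established in Lemma~\ref{lem:AandA'}. Since $J$ sends vertices to vertices and edges to edges bijectively, it suffices to bound the vertex count and edge count of $H'_*$.

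To do this, I would start from the counts for the refined tree $H'$ recorded immediately above the corollary: after adjoining the orbit $\{p_{c'}^{\circ k}(x)\}_{k\ge 0}$ to the vertex set, the refined $H'$ has at most $3m-1$ vertices and at most $3m-2$ edges. Passing to $H'_*$ means discarding all end-vertices together with their incident open end-edges, so the main task is to identify precisely which vertices get discarded. I claim that the end-vertices of the refined $H'$ are exactly the $m+1$ endpoints of the original (unrefined) dyadic Hubbard tree, i.e.\ the points of the critical orbit of $p_{c'}$ (ending at $\beta$). The reason is that the vertices adjoined during the refinement, namely the iterates $p_{c'}^{\circ k}(x)$, are never endpoints: each such iterate is a repelling (pre)periodic point whose itinerary agrees with the kneading sequence of $c$, which differs from the kneading sequence of $c'$, so it cannot lie on the critical orbit of $p_{c'}$; hence each adjoined point is either a pre-existing branch point of $H'$ or an interior point of an edge, and in either case has valence at least $2$ in the refined tree.

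From here the arithmetic is immediate. Each endpoint of a tree is incident to a unique edge, so the $m+1$ endpoints match $m+1$ distinct end-edges, and deleting them yields at most $(3m-1)-(m+1) = 2m-2$ vertices and at most $(3m-2)-(m+1) = 2m-3$ edges in $H'_*$. Applying the isomorphism $J^{-1}$ transports these bounds to $H$, proving the corollary.

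The only step that requires a small argument — and is essentially the only (mild) obstacle — is verifying that no iterate of $x$ coincides with an endpoint of $H'$, which I would handle by the itinerary/kneading argument sketched above (and which is already implicit in the construction of the dynamical counterpart in Definition~\ref{Def:DynamCounterpart} and Lemma~\ref{Lem:DirectlySubordinateDynamics}).
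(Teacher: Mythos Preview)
Your argument is correct and is exactly the intended one: the paper states the corollary without proof because it follows immediately from the tree isomorphism $J\colon H\to H'_*$ of Lemma~\ref{lem:AandA'} together with the counts for the refined $H'$ recorded just before, and your subtraction $(3m-1)-(m+1)=2m-2$, $(3m-2)-(m+1)=2m-3$ is precisely that computation. The only point you flag as needing justification---that no iterate of $x$ is an endpoint of $H'$---is already established inside the proof of Lemma~\ref{lem:AandA'}, where it is shown that $J(H)=H'_*$ is the connected hull of the orbit of $x$ and that the endpoints $p_{c'}^{\circ k}(c')$ lie outside the image of $J$.
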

\begin{proof}
We have a natural injection of marked points (vertices) in $H$ to marked points in $H'$ (Lemma~\ref{Lem:MarkedPointsSubset_new}) that is compatible with the dynamics and with edges connecting marked points (Lemma~\ref{lem:HintoH'}), and the $m+1$ endpoints of $H'$ are not in the range of this injection (Lemma~\ref{Lem:MarkedPointsSubset_new}). 
\end{proof}

\emph{Overview on the argument.}
The key idea of our proof consists of identifying the dynamics of $p$ on $H$ as an embedded subset of the dynamics of $p'$ on $H'$. Since entropy measures the growth rate of choice of orbits of length $n$, the entropy of $p'$ on $H'$ is no less than the entropy of $p$ on $H$, and we need to give an upper bound on the difference. An orbit in $H'$ that realizes the additional choice is one that leaves the embedded image of $H$ in $H'$, and we show that it starts on a single edge $[x,c']$ at the critical value. We show that this edge maps forward homeomorphically a large number of iterations: so if some orbit uses the additional choice, then it will not have any choice for a long time, and this will give an upper bound on the entropy increase. 

\emph{Automata}.
Here and elsewhere, we find it convenient to express some combinatorial properties in terms of \emph{automata}.  We would like to reassure the reader that we only use the basic notion without results from automata theory and hope it will not be distracting. The concept is simple: given a postcritically finite polynomial $p=p_{c}$ with Hubbard tree $H$, we associate to it an automaton $A$ in a natural way, as follows. The states of $A$ correspond to the edges of $H$. There is an arrow in $A$ from edge $e_1$ to edge $e_2$ whenever $p(e_1)\supset e_2$; the number of arrows from  $e_1$ to $e_2$ equals the number of times $p(e_1)$ covers $e_2$ under $p$ (this is well defined because we have a Markov partition). This number of arrows equals $0$ or $1$, except for the unique edge (if any) that contains the critical point in its interior. 
Similarly, denote by $A'$ the natural automaton associated with $p'=p_{c'}\colon H'\to H'$. 

In Lemma~\ref{lem:HintoH'}, we had identified the dynamics on edges of $H$ as a subset of the dynamics on edges of $H'$. We will find it convenient to express this fact by saying that the automaton $A$ can be considered as a sub-automaton of $A'$; this is done in the following lemma. 

\begin{lemma}[Automata and Edges in Hubbard Trees]
\label{Lem:MappingEdgesHubbardTrees} \lineclear
Let $J : H\to H'$ be the embedding of vertices and edges of the Hubbard tree of $H$ into vertices and edges of the Hubbard tree of $H'$ as in Lemma~\ref{lem:HintoH'}. Then $J$ induces an inclusion of automata $A \hookrightarrow A'$ by mapping a state $e$ of $A$ into the state $J(e)$ of $A'$, {and this inclusion is compatible with the number of arrows between states}.

There are exactly two arrows in $A'$ going from $J(A)$ to $A'\setminus J(A)$, and they connect the same states (they start at the state corresponding to the edge containing the critical point, and they end at the state corresponding to the edge $[c',x')$).
\end{lemma}
\begin{proof}
By Lemma~\ref{lem:HintoH'}, the inclusion $J$ injects the set of states of $A$, which are edges of $H$, into the set of states of $A'$, which are edges of $H'$. Moreover, the number of arrows from $a_1$ to $a_2$ (i.e., the degree of the corresponding map on the edge) is equal to the number of arrows from $J(a_1)$ to $J(a_2)$.

The claim concerning the edges from $J(A)$ into $A'\setminus J(A)$ is a straightforward reformulation of the last claim of Lemma~\ref{lem:HintoH'}.
\end{proof}

In view of this lemma, we may simply write $A\subset A'$; this will help us compare the two automata.

We need to discuss whether the dynamics on our trees is \emph{renormalizable}. A quadratic polynomial $p$ is $n$-renormalizable when there exists a proper subset $K_r$  (the ``little Julia set'') of the filled-in Julia set so that $K_r$ is compact, connected, and full and $p_c^{\circ n}\colon K_r\to K_r$ is a proper map of degree $2$, topologically conjugate to another polynomial in $\M$. It is well known that this means that $p$ belongs to a small embedded copy of the Mandelbrot set within itself (here and elsewhere, we ignore the possibility of ``crossed renormalization''; see \cite{McMullenRenormalization,CrossRenorm}: we use ``renormalizable'' in the meaning of ``simple renormalizable''). 

Since $H'$ is dyadic, its dynamics is never renormalizable, but $H$ may be.  
If the critical point of $H$ is periodic of some period $n$, then the dynamics is trivially $n$-renormalizable, and the Hubbard tree resulting from $n$-renormalization is trivial (a single point). This particular case of renormalizability is not related easily to the dynamics of edges and thus to the automaton $A$. However, we have the following.  


The following result is standard and its proof is omitted; compare McMullen~\cite[Sec.~7]{McMullenRenormalization}.
\begin{lemma}[Immediately Satellite Renormalization]
\label{Lem:RenormMinimalPeriod} \lineclear
If $p_c$ is $n$-renormalizable and $n$ is minimal with this property, and the small Julia sets of the $n$-renormalization are denoted $K_i$, 
then either
\begin{itemize}
\item all $K_i$ contain the $\alpha$ fixed point of $p_c$; in this case $H\subset \bigcup_{i=0}^{n-1}K_i$; or
\item the $K_i$ are pairwise disjoint. 
\end{itemize}
\end{lemma}
\begin{remark}
There are renormalizable Hubbard trees in which some $K_i$ intersect and others do not, but this does not happen when the renormalization period $n$ is chosen minimal. 

\looseness-1
In parameter space, Lemma~\ref{Lem:RenormMinimalPeriod} may be expressed as follows. If $p_c$ is renormalizable,  let $\M'$ be the largest renormalization copy of $\M$ containing $p_c$ (corresponding to the least period of \mbox{renormalization}); the two cases in the lemma depend on whether or not $\M'$ touches the main cardioid of $\M$. If not,  the main component of $\M'$ is primitive. (There are many renormalization copies of $\M$ within $\M$ that are non-primitive and that do not touch the main cardioid of $\M$; these are contained in larger renormalization copies of $\M$, and they describe $n$-renormalizable parameters for which $n$ is not minimal.) We call a parameter \emph{immediately satellite renormalizable} if the first case in Lemma~\ref{Lem:RenormMinimalPeriod}
is realized.
\end{remark}

\begin{lemma}[The Hubbard Tree and Renormalization]
\label{Lem:HubbardTreeRenormalization} \lineclear
For every edge $e$ of $H$, at least one of the following is true:
\begin{itemize}
\item there is a $k\ge 0$ such that $p_c^{\circ k}(e)=H$; or
\item $p_c$ is renormalizable and there is a cycle $K_0,K_1,\dots ,K_{n-1}$ of small filled in Julia sets of $p_c$ such that $e\subset \bigcup_{i=0}^{n-1}K_i$.
\end{itemize}
\end{lemma}
\begin{proof}
We start by assuming that $p_c$ is $n$-renormalizable; let $n$ be minimal with this property. We may assume that all $K_i$ are disjoint: if not, then by Lemma~\ref{Lem:RenormMinimalPeriod} all $K_i$ meet at the $\alpha$ fixed point of $p_c$, and thus $H\subset \bigcup_{i=0}^{n-1}K_i$, so the result is clear.

 We need the following properties.
\begin{itemize}
\item[(A)] 
If $T$ is 
a periodic connected subset of $H$ such that $T \neq H$ but $T$ contains at least two points, then $T\subset K_i$ for some $i$. 
\end{itemize}
Indeed, since $T$ is periodic but not a singleton,  the forward orbit of $T$ contains a critical point. Hence around $T$ there is a small filled in Julia set of $p_c$.
\begin{itemize}
\item[(B)] 
If $K$ is one of the $K_i$, or an (iterated) preimage thereof, then $H\sm K$ has at most two connected components.
\end{itemize}
Indeed, there must be some $K_i$ so that $H\sm K_i$ is connected (start with an arbitrary $K_0$, and if $H\sm K_0$ is not connected, choose some $K_1\neq K_0$, and then some $K_2$ in a different component of $H\sm K_1$ than $K_0$; this process must terminate at some $K_i$ for which $H\sm K_i$ is connected).  Further, if $K_j$ does not contain the critical point, then $H\sm K_j$ has no more connected components as $H\sm p_c(K_j)$; and if $K_j$ contains the critical point, then $H\sm K_j$ can have at most twice as many components as $H\sm p_c(K_j)$. Therefore, $H\sm K$ has at most two connected components for any iterated pre-image $K$ of $K_0$.

\begin{itemize}
\item[(C)] 
Every vertex $v\in H$ either has $v\in K_i$ for some $i$ or the forward orbit of $v$ is disjoint from $\bigcup _{i=1}^{n-1}K_i$. 
\end{itemize} 
\looseness-1
To see this, consider a non-periodic iterated pre-image $K'$ of $K_i$. Then $H\setminus K'$ has at most two connected components by (B). Since the critical point is contained in some periodic $K_0$, it follows that $K'$ contains no postcritical point. A vertex in $K'$ must thus be a branch point, and this is possible only if $H\sm K'$ has at least three components, which is not the case.

\looseness-1
Suppose now that $e\not \subset \bigcup_{i=1}^{n-1}K_i$ and let the endpoints of $e$ be $v_1$ and $v_2$. Then for some $m\ge 0$ the vertices $p_c^{\circ nm}(v_1)$ and $p_c^{\circ nm}(v_2)$ are periodic. Since  $e\not \subset \bigcup_{i=1}^{n-1}K_i$, property (C) implies that $p_c^{\circ nm}(v_1)$ and $p_c^{\circ nm}(v_2)$ are not in a single $K_i$. 

Set $e':=p_c^{\circ nm}(e) $ and let $q$ be the least common  period of $p_c^{\circ nm}(v_1)$ and $p_c^{\circ nm}(v_2)$. Then $\bigcup_{k\ge 0}p_c^{\circ qk} (e')$ is a periodic subset of $H$; thus $\bigcup_{k\ge 0}p_c^{\circ qk} (e')=H$ by (A). It is easy to see that $\bigcup_{k\ge 0}p_c^{\circ qk}(e') = p_c^{\circ qm}(e') $ for some $m$ (the set $p_c^{\circ qk}(e')$ is increasing in $k$). 

Finally, if $p_c$ is not renormalizable, choose an edge $e=[v_1,v_2]$ and an $m\ge 0$ so that $p_c^{\circ m}(v_1)$ and $p_c^{\circ m}(v_2)$ are periodic. Let again $e':=[p_c^{\circ m}(v_1),p_c^{\circ m}(v_2)]$; then $\bigcup_{k\ge 1}p_c^{\circ k} (e')$ is periodic, and by (A) it follows that either $p_c$ is renormalizable or $\bigcup_{k\ge 1}p_c^{\circ k} (e')=H$, and in the latter case again $p_c^{\circ m} (e')=H$ for some $m$. 
\end{proof}


We want to relate renormalization of $H$ to properties of the associated automaton $A$. We write 
$A=A_n\cup A_r$ so that 
\begin{itemize}
\item $A_n$ (non-renormalizable edges) contains all states that, for some fixed finite iterate, reach all states of  $A$ simultaneously; and 
\item $A_r$ (renormalizable edges) contains all states from which {not all} of $A$ can be reached simultaneously: these correspond to edges within the Hubbard trees of  ``smalls Julia sets'' corresponding to renormalization domains).
\end{itemize}
We should remark that similar notions can be found in \cite[Sec~3.3]{Jung}.

\begin{lemma}[Renormalization and Automata]
\label{Lem:RenormAutom} \lineclear 
Suppose that $p_c$ is postcritically finite. 
We have $A_r=\emptyset$ if and only if  
\begin{itemize}
\item 
either $p_c$ is non-renormalizable 
\item
or $c$ is periodic of some period $n$ and $p_c$ is $n'$-renormalizable only for $n'=n$ and so that all little Julia sets are disjoint.
\end{itemize}

If $p_c$ is renormalizable, let $n$ be minimal so that $p_c$ is $n$-renormalizable. Let $K_0,K_1,\dots K_{n-1}$ be the cycle of small filled-in Julia sets of $p_c$. Then $A_r$ is the set of states of $A$ so that the associated edges are within $\bigcup_{i=0}^{n-1}K_i$.
\end{lemma}
\begin{proof}
Let again $H$ denote the Hubbard tree of $p_c$.
If $p_c$ is non-renormalizable, then it follows from Lemma~\ref{Lem:HubbardTreeRenormalization} that every edge $e\subset H$ has a $k$ so that $p_c^{\circ k}(e)=H$ for all large $k$, so $A_r=\emptyset$. 

Suppose $p_c$ is renormalizable and let $n$ be minimal so that $p_c$ is $n$-renormalizable. If $e$ is an edge of $H$ such that $e\not\subset \bigcup_{i=1}^{n-1} K_i$, then every sufficiently high iterate of $e$ will cover all of $H$ (Lemma~\ref{Lem:HubbardTreeRenormalization}), and the associated state of $A$ is in $A_n$. The other case is that $e\subset \bigcup_{i=0}^{n-1}K_i$; then by Lemma~\ref{Lem:RenormMinimalPeriod} either the $K_i$ are disjoint or they all touch at the $\alpha$ fixed point (which is a vertex of $H$ by out convention), and in both cases we have $e\subset K_i$ for a unique $i$. In this case, the orbit of $e$ will follow the orbit of $K_i$ and every edge is in only one $K_i$, so it follows that the associated state is in $A_r$.

\looseness-1
We conclude that if $p_c$ is renormalizable, then $A_r=\emptyset$ if and only if $\bigcup_{i=1}^{n-1} K_i$ contains no edge of $H$, and that is the case if and only if every ``little Hubbard tree'' $H_i\subset K_i$ (corresponding to the Hubbard tree after renormalization) is trivial and the $K_i$ are all disjoint. Finally, the little Hubbard trees are trivial if and only if $c$ is periodic of period $n$. 
\end{proof}

An automaton is called \emph{irreducible} if every state of $A$ can be reached from every other. This is certainly the case when $A$ is non-renor\-ma\-li\-zable, but may also happen in the renormalizable case: for instance, the Hubbard tree of the rabbit polynomial with a superattracting $3$-cycle has its Hubbard tree in the form of a topological \textsf{Y} where the three edges are permuted cyclically: the automaton has the form $e_0\to e_1\to e_2\to e_0$ and is irreducible, but the dynamics is renormalizable. The difference is that no edge covers all of $A$ after the same number of iterations (see Lemma~\ref{Lem:RenormAutom}).

If $A_r\neq\emptyset$, so that the dynamics is renormalizable,  we may have $A_n=\emptyset$ or $A_n\neq \emptyset$. The next lemma shows that the former case happens only in the case of immediate satellite renormalization.

\begin{corollary}[Existence of Limit]
\label{Cor:ExistenceLimit} \lineclear
In the dynamics of $p_c$, the limit $\lim_{n\to\infty} (1/n)\log N(n)$ exists whenever $p_c$ is not immediately satellite renormalizable.
\end{corollary}
\begin{proof}
Consider a parameter $c\in\M$ such that $c$ is not immediately satellite renormalizable. Let $e$ be the unique edge in $[\alpha, -\alpha]\subset H$ such that $e$ is attached to $\alpha$. By Lemma~\ref{Lem:RenormAutom} we have $e\in A_n$. Therefore, there is a $k\ge 0$ such that $p_c^{\circ k}(e)=H$, see Lemma~\ref{Lem:HubbardTreeRenormalization}. Thus $e$ contains as many pre-critical points of generation $n+k$ as the number of pre-critical point of generation $n$ in the entire $H$. Since $e\subset [\alpha,-\alpha]$ we get $\sup _{i\le n+k}N_c(i)\le N_c(n)$, hence
\[
\limsup_n\frac 1 n \log N_c(n)=\lim_n\frac 1 n \log N_c(n).
\] 
 If $c$ is not postcritically finite and not an endpoint of $\M$ (that is, $c$ is associated to two external angles in $\M$), then there are two non-immediately-satellite-renormalizable parameters $c_1$ and $c_2$ with $N_{c_1}(n)\le N_c(n)\le N_{c_2}(n)$ and $0\le \tilde h(c_2)-\tilde h(c_1)\le\eps$ for arbitrary $\eps>0$ and the result holds as well (here we use continuity of entropy).

Finally, if $c$ is a non-immediately-satellite-renormalizable endpoint, then by continuity of $\tilde h$, for any $\eps>0$ there exists a postcritically finite non-immediately-satellite-renormalizable parameter $c_1\prec c$ so that $\tilde h(c)-\tilde h(c_1)\le \eps$. 
By monotonicity, we have 
\[
\liminf \frac 1 n N_{c_1}(n)  \le \liminf \frac 1 n N_c(n) \;,
\]
and 
\begin{align*}
\limsup \frac 1 n N_c(n)&=\tilde h(c)\le \tilde h(c_1)+\eps=\liminf \frac 1 n N_{c_1}(n)+\eps
\\ &\le \liminf \frac 1 n  N_{c}(n)+\eps
\;.
\end{align*}
Since $\eps>0$ was arbitrary, the claim follows in this case too.
\end{proof}

\begin{lemma}[Immediate Satellite Renormalization and $A_n=\emptyset$]
\lineclear
A polynomial $p_c$ has $A_n=\emptyset$ if and only if either $p_c$ is immediately satellite renormalizable or it has a superattracting fixed point. \end{lemma}
\begin{proof}

If $p_c$ is immediately satellite $n$-renormalizable, let $K_0,\dots,K_{n-1}$ be the little Julia sets. By Lemma~\ref{Lem:RenormMinimalPeriod}, we have $H\subset \bigcup_i K_i$ and all $K_i$ touch at the $\alpha$ fixed point. Since the $\alpha$ fixed point is a vertex of $H$ every edge is contained in some $K_i$, so the corresponding state is in $A_r$, hence $A_n=\emptyset$. Also, if $p_c$ has a superattracting fixed point, then $H$ is trivial and $A_n=\emptyset$.

For the converse, if $A_n=\emptyset$ but $H$ is not trivial, then $p_c$ must be renormalizable (otherwise $A_r=\emptyset$ by Lemma~\ref{Lem:HubbardTreeRenormalization}), and by Lemma~\ref{Lem:RenormMinimalPeriod} we must have $H\subset\bigcup_i K_i$, so $p_c$ is immediate satellite renormalizable.
\end{proof}

\begin{remark}
Let us note that there are no arrows from $A_r$ to $A_n$, so within $A$ there is no escape from the set of renormalization states $A_r$. However, in $A'\supset A$, if $A_r\neq\emptyset$, then there are two arrows from $A_r$ to $[c',x]$, which is a state in $A'\setminus A$ (see Lemma~\ref{Lem:MappingEdgesHubbardTrees}).
\end{remark}

\goodbreak

We will consider the following special states of $A$ and $A'\supset A$:
\begin{itemize}
\item 
the $0$-state in $A'$ contains the critical point. 
Denoting this state by $e'_0$, we set the $0$-state of $A$ to be $J^{-1}(e'_0)$. This convention is compatible with the inclusion $J\colon A\hookrightarrow A'$ from Lemma~\ref{Lem:MappingEdgesHubbardTrees}.
\item 
the $[c',x]$-state of $A'\setminus A$;
\item 
states of $A$ and of $A'$ that belong to the interval $[\alpha, -\alpha]$.
\end{itemize}


\subsection{Counting Precritical Paths}
\label{Sub:CountingPaths}
A \emph{path} in $A$ or $A'$ is a sequence of arrows so that each arrow starts where the previous arrow ends. The \emph{length} of a path is the number of arrows it contains.  We can also think of a path as a sequence of states so that there is an arrow from every state to the subsequent one (that is, a sequence of edges in the Hubbard tree so that each edge covers the next one under the map). When a path connects two states that are connected by multiple arrows, then there are accordingly multiple paths along this sequence of states (as an example, in $A'$ there are two paths of length $1$ from the $0$-state to the $[c',x]$-state).

We define a \emph{relevant precritical path} in $A'$ or in $A$ as a path that starts at a state in $[\alpha,-\alpha]$ and terminates at the $0$-state.  By basic properties of symbolic dynamics, relevant precritical paths in $A'$ are in bijection with precritical points of $p_{c'}$ in $[\alpha,-\alpha]$ because the critical point of $p_{c'}$ is not a vertex of $H'$. Different relevant precritical paths in $A$ encode different relevant precritical points of $p_c$ in $[\alpha,-\alpha]$.

Every relevant precritical path $s$ in $A'$ has the form  
\begin{equation}
\label{eq:DecompOfs}
s=b_0c_0a_1b_1c_1a_2b_2c_3\dots a_pb_pc_p
\end{equation}
such that (roughly: $a_i, b_i, c_i$ are the sub-paths in $  A'\setminus A$, in $A_n$, and in $A_r$ respectively)
\begin{itemize}
\item 
$a_i$ is an (almost) ``choiceless''  path that starts at the $0$-state, then goes to $[x,c']$, then travels outside of states in $[\alpha, -\alpha]$, and terminates at the first state reached in $[\alpha,-\alpha]\subset A$;
\item 
if $a_i$ terminates at a state in $A_r$, then $b_i=\emptyset$; otherwise, $b_i$ is a path that starts at the state in $[\alpha,-\alpha]$ where $a_i$ terminates and continues while states in $A_n$ are visited (if $i=0$, then instead of the terminal state of $a_i$ we take the initial state of $s$); 
\item 
if $A_r=\emptyset$, then $c_i=\emptyset$; otherwise: if $b_i\not=\emptyset$, then $c_i$ is a path that starts in $A_n$ where $b_i$ terminates, and immediately moves into $A_r$, and otherwise it starts at a state in $A_r$ where $a_i$ terminates. The end of $c_i$ is the $0$-state, and until then the path remains in $A_r$ (again, if $i=0$, then instead of the terminal state of $a_i$ we take the initial state of $s$).
\end{itemize}

Observe that paths in $A'$ that are not in $A$ start on the edge $[x,c']$, so they are described by the $a_i$ that are long and have almost no choice, hence contribute little additional entropy. Indeed, every $a_i$ has length at least $m+1$ because $[x,c']$ needs $m$ iteration to reach $[\beta,\alpha]$, and might need further iterations to land in $[\alpha,-\alpha]$ (Lemma~\ref{Lem:InjectiveDynamicsLastEdge}). Once it lands there, we are either in $A_n$ and we continue with a path $b_i$ as long as we stay in $A_n$, or we are already in $A_r$ and $b_i=\emptyset$, and $c_i$ continues until the next visit of the $0$ state. In particular, if $A_n=\emptyset$, then $b_i=\emptyset$. 
We will refer to $a_i$ as \emph{excursions} (long almost choice-less parts).

Defining $\ell_i,t_i, k_i$ as the lengths of $a_i, b_i, c_i$ respectively,  we say that $s$ has \emph{combinatorial pattern} $P=(t_0,k_0,\ell_1,t_1,k_1,\ell_2,t_2,k_2,\dots,\ell_p,t_p,k_p)$.

\begin{lemma}[Almost Choiceless Paths]
\label{lem:ChoiceLessPaths} \lineclear
For every length $\ell\ge m+1$, there are at most two possible paths in $A'$ that 
\begin{itemize}
\item start at the $0$-state;
\item then immediately go to the $[c',x]$-state;
\item then travel outside $[\alpha,-\alpha]$; 
\item terminate at a given state in $[\alpha,-\alpha]$
\item and have a length $\ell\ge m+1$.
\end{itemize}
\end{lemma}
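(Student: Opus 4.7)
The plan is to establish two facts: (i) the initial step $e_0 \to [c',x]$ contributes at most two arrows, and (ii) the continuation $[c',x] \to s_2 \to \cdots \to s_\ell$ is uniquely determined by the terminal state $s_\ell$. Together these yield at most two paths, and the length bound $\ell \ge m+1$ will follow from Lemma~\ref{Lem:InjectiveDynamicsLastEdge}.

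For (i), I would invoke Lemma~\ref{lem:AandA'}: the critical point of $p_{c'}$ lies in $e_0$ (either in its interior or on its boundary), so $p_{c'}|_{e_0}$ is at most a $2$-to-$1$ fold at the critical point with image an arc emanating from the critical value $c'$. The end-edge $[c',x]$ is attached at $c'$ and lies along this image arc, so it is covered at most twice by $p_{c'}|_{e_0}$, giving at most two arrows $e_0 \to [c',x]$ in $A'$.

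For (ii), I would associate to each candidate path a sub-arc $I \subset [c',x]$ with $p_{c'}^{\circ j}(I) \subset s_{j+1}$ for $j = 0, 1, \dots, \ell-1$. Because each intermediate $s_j$ (for $1 \le j \le \ell-1$) lies outside $[\alpha,-\alpha]$ while the critical point $0$ lies in $[\alpha,-\alpha]$, the iterated arcs $p_{c'}^{\circ j}(I)$ avoid $0$ for $j = 1, \dots, \ell-2$, so $p_{c'}$ acts as a homeomorphism on each of them and $p_{c'}^{\circ(\ell-1)}|_I$ is injective. Combined with Lemma~\ref{Lem:InjectiveDynamicsLastEdge} (injectivity of $p_{c'}^{\circ m}|_{[c',x]}$), this pins down $I$ as the unique sub-arc of $[c',x]$ whose orbit traces an itinerary outside $[\alpha,-\alpha]$ and reaches $s_\ell$ at step $\ell-1$; the intermediate edges $s_2, \dots, s_{\ell-1}$ are then read off from the forward orbit of $I$. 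For the length bound, the same lemma locates $p_{c'}^{\circ j}([c',x])$ in $\overline{T_{m-j}\setminus T_{m-j-1}}$ for $j = 1, \dots, m-1$, and these outer layers are disjoint from $[\alpha,-\alpha] \subset T_0$; hence the earliest iterate of $[c',x]$ that can enter $[\alpha,-\alpha]$ is the $m$-th one, yielding $s_{m+1}$, so $\ell \ge m+1$ once the initial arrow $e_0 \to [c',x]$ is counted.

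The main obstacle is the uniqueness claim when $\ell - 1 > m$: beyond iterate $m$, the map $p_{c'}^{\circ(\ell-1)}|_{[c',x]}$ need not be globally injective, since the iterated arc may eventually fold across the critical point. The delicate step is to verify that the constraint $s_j \notin [\alpha,-\alpha]$ restricts us to a single admissible injective branch of the dynamics, so that only one sub-arc of $[c',x]$ can realize an admissible itinerary ending at the prescribed $s_\ell$; one then concludes uniqueness of the continuation from the injectivity argument applied branch-by-branch.
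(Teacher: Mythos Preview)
Your overall strategy matches the paper's: (i) the initial step $e_0\to[c',x]$ gives at most two arrows, and (ii) the continuation from $[c',x]$ is unique. Part (i) is fine.

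For part (ii), however, you introduce an unnecessary complication by working with a sub-arc $I\subset[c',x]$ and then trying to argue uniqueness of $I$. This is what produces the gap you flag in your final paragraph: injectivity of $p_{c'}^{\circ(\ell-1)}$ on $I$ does not give uniqueness of $I$ among all candidate sub-arcs, and your proposed fix (``the constraint restricts us to a single admissible injective branch'') is a restatement of the conclusion rather than an argument.

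The paper avoids this entirely by tracking the \emph{whole} edge $[c',x]$ rather than a sub-arc. The point is that your own observation --- that $0\in[\alpha,-\alpha]$ --- already does all the work. As long as the iterated image $p_{c'}^{\circ j}([c',x])$ is disjoint from $[\alpha,-\alpha]$, it is disjoint from $0$, so the next iterate is again injective on the whole edge. Thus injectivity of $p_{c'}^{\circ j}|_{[c',x]}$ persists right up to the first iterate $j_0$ at which the image meets $[\alpha,-\alpha]$; and by definition of the paths in question, $j_0$ is exactly when the path terminates. There is no ``beyond iterate $m$'' regime to worry about, because the very condition that defines termination is the one that would break injectivity. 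This also gives the length bound $\ell\ge m+1$ directly, since by Lemma~\ref{Lem:InjectiveDynamicsLastEdge} the first $m$ iterates of $[c',x]$ land in the outer layers $\overline{T_{m-j}\setminus T_{m-j-1}}$, which are disjoint from $[\alpha,-\alpha]$.
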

\begin{proof}
First, the edge of $H'$ associated with the $0$-state covers $[c,x]$ with degree $2$ under $p_{c'}$. Then $[x,c]$ maps injectively for at least $m+1$ iterations until $[x,c]$ starts to partially cover $[\alpha,-\alpha]$. But this is, by definition, when the path under discussion terminates.
\end{proof}


\begin{lemma}[Number of Paths $c_i$] 
\label{lem:c_iPaths} \lineclear
Suppose $c$ is renormalizable. Let $g$ be the period of the biggest small Mandelbrot set containing $c$. Then there are at most $2^{{k}/{g}}$ paths in $A$ that 
\begin{itemize}
\item start at a given state in $A_n$ or in $A_r$;
\item all subsequent states are within $A_r$;
\item terminate at the $0$-state; and
\item have length $k$.
\end{itemize}
\end{lemma}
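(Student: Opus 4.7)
Since $c$ is renormalizable with $g$ the period of the biggest small Mandelbrot copy containing $c$, the dynamics of $p_c$ admits a cycle of little Julia sets $K_0, K_1, \ldots, K_{g-1}$ cyclically permuted by $p_c$ (with $0 \in K_0$), and $p_c^{\circ g}$ restricts to a polynomial-like map of degree $2$ on $K_0$. Correspondingly, the states of $A_r$ partition into $g$ layers $E_0, \ldots, E_{g-1}$ according to which $K_i$ they lie in, and every arrow of $A_r$ advances the layer index by $1$ modulo $g$; the $0$-state lies in $E_0$. The strategy is to exploit this cyclic structure to reduce the path count to a degree count for the renormalized quadratic-like map.

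A path of length $k$ in $A$ of the stated form enters $A_r$ after its first arrow and then cycles through the layers until terminating at the $0$-state in $E_0$; its phase is therefore forced by $k$ modulo $g$, and it visits $E_0$ exactly $\lceil k/g\rceil$ times. Form the induced $g$-step automaton $A_r^{(g)}$ on the states of $E_0$, whose arrows $e \to e'$ with multiplicity $\mu$ record $\mu$ length-$g$ paths in $A_r$ from $e$ to $e'$. I claim that $A_r^{(g)}$ coincides with the Markov automaton of $p_c^{\circ g}|_{K_0}$ acting on the edge-partition of $K_0$ inherited from $H$: a path of length $k$ in $A_r$ from a state in $E_0$ to the $0$-state then corresponds bijectively to a path of length $k/g$ in $A_r^{(g)}$ ending at the $0$-state, which in turn encodes a preimage of the critical point under $\left(p_c^{\circ g}\right)^{\circ k/g}$. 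Since $p_c^{\circ g}|_{K_0}$ has degree $2$, its $j$-th iterate has degree $2^j$, so the preimage count, and hence the number of such paths, is bounded by $2^{k/g}$. The initial at most $g$ arrows needed to bring a starting state (in $A_n$ or in a nonzero layer) into $E_0$ traverse only non-critical edges in the renormalized sense and contribute a negligible factor absorbed in the exponent.

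The main obstacle is justifying the identification $A_r^{(g)} \leftrightarrow$ Markov automaton of $p_c^{\circ g}|_{K_0}$, and more specifically that the total out-multiplicity from each state of $A_r^{(g)}$ is bounded by the local degree of $p_c^{\circ g}$, which equals $2$ at the $0$-state and $1$ elsewhere. The edges of $H$ inside $K_0$ may be a refinement of the natural edge-partition of the renormalized polynomial's Hubbard tree; however, any such refinement preserves the total transfer-operator mass per state, so the degree bound of $2$ survives. Concretely, one verifies that a non-critical edge of $H$ in $K_0$ maps under $p_c^{\circ g}$ by a homeomorphism onto a union of edges whose total multiplicities sum to $1$, while the $0$-state maps by a degree-$2$ fold. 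Making this compatibility precise, and using expansion plus the Markov property to confirm that distinct paths produce distinct orbits, is the technical heart of the proof.
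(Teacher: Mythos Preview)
Your approach is essentially the paper's, only dressed in more machinery. The paper's proof is three sentences: it invokes the cycle of little Hubbard trees $H_0,\dots,H_{g-1}$, notes that $p_c^{\circ gt}\colon H_0\to H_0$ has degree at most $2^t$, and concludes that there are at most $2^{\lfloor k/g\rfloor}$ paths in $A_r$ of length $k$ terminating at the $0$-state. Your layer decomposition $E_0,\dots,E_{g-1}$ and the induced $g$-step automaton $A_r^{(g)}$ simply repackage this degree bound; the ``main obstacle'' you flag (identifying $A_r^{(g)}$ with the Markov automaton of the renormalized map) is bypassed in the paper by appealing directly to the topological degree of $p_c^{\circ g}$ on $H_0$ rather than to any edge-level transition structure. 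Your handling of the initial arrows (``negligible factor absorbed in the exponent'') corresponds to the paper's parenthetical remark that the first step from the given starting state into $A_r$ has not even been counted, which only strengthens the bound. So nothing is wrong, but you can replace the entire automaton discussion by the single observation that a path of length $k$ in $A_r$ ending at the $0$-state encodes a preimage under $p_c^{\circ k}$ of a point in the critical edge, lying in the cycle of little Hubbard trees, and there are at most $2^{\lfloor k/g\rfloor}$ of those.
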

\begin{proof}
Let $H_0,H_1,\dots H_{g-1}$ be the cycle of small Hubbard trees associated with the the largest renormalizable Hubbard trees (corresponding to the largest small Mandelbrot set containing $c$). Then the degree $p_c^{\circ gt}:H_{0}\to H_{0}$ is at most $2^t$ for all $t$. Therefore, there are at most $2^{t}$ paths in $A_r$ with length $k\in \{ gt,gt+1,\dots, g(t+1)-1\}$ that terminate at the $0$-state; so for given length $k$, the number of such paths is at most $2^t=2^{\lfloor k/g \rfloor}$ (and we have not even counted the first step from a given state of $A$ to $A_r$). 
\end{proof}


Fix a combinatorial pattern $P=(t_0,k_0,\ell_1,t_1,k_1,\ell_2,t_2,k_2,\dots,\ell_p,t_p,k_p)$ and let $n=|P|=t_0+k_0+\sum_{i\ge 1}^p(\ell_i+t_i+k_i)$. 
When comparing entropy in $A'$ and in $A$, we will consider the additional relevant precritical paths in $A'$ and show that they correspond to relevant precritical paths in $A$ of bounded length, so that there are not too many additional paths in $A'$. More precisely, if an excursion has length $\ell_i\ge 3m$ then the new path within $A$ will be shorter (or have equal length) than before. 

We thus introduce a quantity $\kappa$, called \emph{uncertainty of $P$}, that measures the possible increase of length as follows:
\[
\kappa(P):= \frac 1 n \sum_{\ell_i<3m}(3m- \ell_{i})
= \frac 1 n  \sum_i  \max(0,3m-\ell_i)
\]
(the first sum is taken over all $\ell_i$ that are less than $3m$). Higher values of $\kappa$ create problems as the paths in $A$ might be shorter than the corresponding paths in $A'$. Since all $\ell_i>m$ and $\sum\ell_i\le n$, we have $3m-\ell_i\le 2m < 2\ell_i$ and $\kappa(P)\in[0, 2]$.

Denote by $N'(P)$ the number of all precritical paths in $A'$ with pattern $P$.
For $\kappa\in [0,2]$ define 
 \[
N'(\kappa,n):= \sum_{\kappa(P)\le \kappa,\  |P|=n} N'(P)
\;,
\]
the numbers of precritical paths in $A'$ with small uncertainty. 
We define $S'(\kappa,n)$ to be the corresponding set of relevant precritical paths with small uncertainty, so that $N'(\kappa,n)=|S'(\kappa,n)|$. 


\begin{lemma}[Replacing a Path in $A'$ by a Path in $A$]
\label{lem:substitution} \lineclear
Suppose $A_n\neq\emptyset$.
If $s=b_0c_0 a_1b_1c_1a_2b_2c_3\dots a_pb_pc_p$ is a precritical path in $A'$ with uncertainty $\kappa$, then there are paths $a^*_i$ in $A_n$ with lengths in $\{m,\dots,3m\}$ such that 
$s^*:=b_0 a^*_1 b_1a^*_1b_2a^*_2\dots a^*_p b_pc_p$ is a path in $A$. If $n$ is the length of 
$s$, then $s^*$ has length at most $n + \kappa n$.
\end{lemma}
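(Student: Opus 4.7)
The plan is to exploit the strong connectedness of the sub-automaton $A_n\subset A$. By definition, every state of $A_n$ reaches every state of $A$ after some fixed finite iterate; since $A$ admits no arrows from $A_r$ back into $A_n$, any such reaching path whose endpoint lies in $A_n$ must stay inside $A_n$, so $A_n$ is strongly connected as a directed subgraph. Combined with Corollary~\ref{cor:EdgesOfH}, the diameter of $A_n$ is at most $|A_n|-1\le 2m-4<3m$, and $A_n$ contains a directed cycle of length at most $|A_n|\le 2m-3$. A direct consequence is the following auxiliary claim: for any two states $u,v\in A_n$, there is a path in $A_n$ from $u$ to $v$ whose length lies in $\{m,m+1,\dots,3m\}$; one realizes this by taking a minimal path and, if necessary, inserting a short cycle (possibly several times) to adjust the total length into the window $[m,3m]$.

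Given this, I would identify the endpoints of $a^*_i$ dictated by the decomposition~\eqref{eq:DecompOfs}. The segment $b_{i-1}$ (respectively $b_0$ when $i=1$, respectively the previously built $a^*_{i-1}$ when $b_{i-1}$ is empty) ends at a state $u_i^-\in A_n$. The segment $b_i$ begins at the state $u_i^+\in[\alpha,-\alpha]\cap A$ where $a_i$ first re-enters $[\alpha,-\alpha]$; when $b_i\ne\emptyset$ this state lies in $A_n$, and I take $a^*_i$ to be a path in $A_n$ from $u_i^-$ to $u_i^+$ of length in $\{m,\dots,3m\}$, as furnished by the auxiliary claim. The degenerate case $b_i=\emptyset$ (so $u_i^+\in A_r$) is handled by having $a^*_i$ terminate at a state of $A_n$ which has an arrow into $u_i^+$; such a state exists because $A_r$ is reachable from $A_n$ within $A$ whenever $A_r\ne\emptyset$.

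Finally, I would verify the length bound. Writing $\ell_i=|a_i|$, $t_i=|b_i|$, $k_i=|c_i|$ and using $|a^*_i|\le 3m$, together with the fact that $s^*$ drops every $c_i$ except $c_p$, one computes
\begin{align*}
|s^*|-|s|
&=\sum_{i=1}^p\bigl(|a^*_i|-\ell_i\bigr)-\sum_{i=0}^{p-1}k_i \\
&\le\sum_{i=1}^p\max(0,\,3m-\ell_i)
= n\cdot\kappa(P),
\end{align*}
so that $|s^*|\le n(1+\kappa)$, as required. The principal obstacle is the combinatorial bookkeeping for the degenerate configurations (some $b_i$ empty) together with ensuring that the length of each $a^*_i$ falls precisely in $\{m,\dots,3m\}$; both reduce to the sharp diameter and cycle-length bounds in $A_n$ that follow from the edge count $|A|\le 2m-3$.
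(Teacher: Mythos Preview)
Your approach is essentially the same as the paper's: both use that $A_n$ is strongly connected with at most $2m-3$ states (from Corollary~\ref{cor:EdgesOfH}) to produce connecting paths $a^*_i$ of length in $[m,3m]$, and both verify the length bound by the same computation. The paper pads short paths by prepending ``$m$ arbitrary steps'' within $A_n$, while you insert short cycles; these are equivalent devices. For empty $b_i$ the paper simply declares a fixed dummy state $a\in A_n$ to be the beginning and end of each empty $b_i$, so that every $a^*_i$ connects two states of $A_n$; your recursive convention (take $u_i^-$ to be the terminus of $a^*_{i-1}$) achieves the same thing.

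One small point: your treatment of the degenerate case $b_i=\emptyset$ is slightly off. You require $a^*_i$ to end at a state of $A_n$ with an arrow into $u_i^+\in A_r$, and justify this by saying ``$A_r$ is reachable from $A_n$''. But reachability only gives a \emph{path} into $A_r$, not a direct arrow to the specific state $u_i^+$; and in any case this condition is irrelevant for $i<p$ (since $c_i$ is dropped from $s^*$) and does not by itself make the concatenation $a^*_p c_p$ valid for $i=p$. The paper's dummy-state convention sidesteps the first issue cleanly; the second issue (joining $a^*_p$ to $c_p$ when $b_p=\emptyset$ and $c_p$ begins in $A_r$) is a minor imprecision in the lemma statement itself and is harmless for the application in Proposition~\ref{prop:Uniformity}, where only the existence of a precritical path in $A$ of length $\le n(1+\kappa)$ is used.
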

\begin{proof}
Recall that some $b_i$ might be empty paths. Choose any state $a\in A_n$. For  convenience, we say that $a$ is the beginning and the end of every empty $b_i$ with $i<p$. If $b_p$ is empty but $c_p$ is not, then we say that the beginning of $b_p$ is the beginning of $c_p$. If  $b_pc_p$ is empty, then the beginning of $b_p$ is the $0$ state.

Since $A_n$ is irreducible and has less than $2m$ vertices (Corollary~\ref{cor:EdgesOfH}) we may replace every $c_ia_{i+1}$ by a path $a^*_{i+1}$ in $A_n$ of length at most $2m$ so that $a^*_{i+1}$ connects the end of $b_{i}$ with the  beginning of $b_{i+1}$ (which are by definition both in $A_n$); by adding up to $m$ arbitrary steps at the beginning, we may arrange things so that $a^*_i$ has length in $\{m,\dots,3m\}$.

Since the length of each $a_i$ is at least $m$, this procedure increases the length of $s$ by at most $\kappa n$ (and even shortens it whenever $c_ia_{i+1}$ has length greater than $3m$). 
\end{proof}

\begin{lemma}[Counting Patterns]
\label{lem:PatternsGrowth} \lineclear
The quantity  
\[
 \limsup_{n}\frac{1}{n}\log\left(\# \{\text{patterns of length }n\}\rule{0pt}{11pt}\right)
\]
tends to $0$ as $m$ tends to infinity. 
\end{lemma}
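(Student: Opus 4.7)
The plan is elementary counting: stratify patterns by the number $p$ of detours and exploit the fact that the constraint $\ell_i\ge m+1$ forces $p$ to be $O(n/m)$, so that patterns correspond to compositions of $n$ with only a small fraction of ``break points''.

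A pattern of length $n$ with $p$ detours is a $(3p+2)$-tuple $P=(t_0,k_0,\ell_1,t_1,k_1,\ldots,\ell_p,t_p,k_p)$ of non-negative integers summing to $n$ with $\ell_i\ge m+1$ for all $i$. Substituting $\ell_i':=\ell_i-(m+1)\ge 0$ sets up a bijection with compositions of $n-p(m+1)$ into $3p+2$ non-negative parts, whose number is $\binom{n-p(m+1)+3p+1}{3p+1}$. Since $\binom{\cdot}{3p+1}$ is monotone in its upper argument and $m\ge 2$, this is at most $\binom{n+1}{3p+1}$; and the constraint $p(m+1)\le n$ forces $p\le \lfloor n/(m+1)\rfloor$. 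Summing over $p$ and reindexing by $q=3p+1$ yields
\[
\#\{\text{patterns of length }n\}\;\le\;\sum_{q=0}^{Q}\binom{n+1}{q}\;\le\;(Q+1)\binom{n+1}{Q},
\qquad Q:=3\lfloor n/(m+1)\rfloor+1,
\]
where in the last step we use that $\binom{n+1}{q}$ is increasing in $q$ for $q\le (n+1)/2$ (which holds once $m$ is large enough). By the standard entropy estimate $\binom{n+1}{Q}\le \exp\bigl((n+1)\,H(Q/(n+1))\bigr)$ with $H(x):=-x\log x-(1-x)\log(1-x)$, we obtain
\[
\frac{1}{n}\log\#\{\text{patterns of length }n\}\;\le\;\frac{\log(Q+1)}{n}+\frac{n+1}{n}\,H\!\bigl(Q/(n+1)\bigr).
\]
As $n\to\infty$ with $m$ fixed, the first summand vanishes and $Q/(n+1)\to 3/(m+1)$, so
\[
\limsup_n\frac{1}{n}\log\#\{\text{patterns of length }n\}\;\le\;H\!\left(\frac{3}{m+1}\right).
\]
Since $H(x)\to 0$ as $x\to 0^+$, letting $m\to\infty$ gives the claim.

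There is no real obstacle here: the argument is bookkeeping combined with the classical binomial/entropy inequality. The one point to check is that the substitution $\ell_i'=\ell_i-(m+1)$ really produces an exact bijection with compositions into $3p+2$ non-negative parts, i.e., that the only non-trivial restriction on the coordinates is the lower bound on each $\ell_i$; this is clear from the definition of pattern. Everything else is routine.
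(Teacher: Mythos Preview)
Your argument is correct. Stratifying by the number $p$ of detours, reducing to compositions via the shift $\ell_i'=\ell_i-(m+1)$, and bounding the binomial sum by the entropy function $H(3/(m+1))$ all go through cleanly; the only constraints on the coordinates of a pattern are indeed $\ell_i\ge m+1$, so your upper count is valid.

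The paper proceeds differently. It encodes a pattern by its sequence of partial sums $t_0,\,t_0+k_0,\,t_0+k_0+\ell_1,\ldots$ and observes that any three consecutive increments contain exactly one $\ell_i$, hence any four consecutive partial sums span at least $m+1$. Consequently each block $[qm,(q+1)m)$ contains at most three partial sums, and a crude count gives at most $((m+1)^3)^{n/m+1}$ patterns, so $\limsup_n\frac1n\log(\#)\le 3\log(m+1)/m\to 0$. This is more elementary in that it avoids binomial coefficients and the entropy inequality entirely, at the cost of a slightly looser bound. Your approach is the standard stars-and-bars route and yields the asymptotically equivalent $H(3/(m+1))$; it has the virtue of making the count essentially exact for each fixed $p$ before summing. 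Either method is perfectly adequate here.
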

\begin{proof}
Every pattern $P$ is uniquely characterized by a non-decreasing sequence of positive integers
\[t_0,t_0+k_0,t_0+k_0+\ell_1, t_0+k_0+\ell_1+t_1, \dots. \]
Since $\ell_i\ge m+1$, for every $q\in\{0,\dots,\lfloor n/m\rfloor\}$ the interval $[qm,(q+1)m)$ contains at most $3$ elements of the above sequence; and the same is true for the final interval $[\lfloor n/m\rfloor m+1, n]$. Therefore, the number of all patterns is bounded by $Z(n,m):=((m+1)^3)^{(n/m)+1}=e^{{3(n+m)\log(m+1)}/{m}}$. For fixed $m$, we have $\limsup_n (1/n)\log Z(n,m)=3\log(m+1)/m$, and indeed this tends to $0$ as $m\to\infty$.
\end{proof}


{Recall that $c'$ is a dyadic parameter of generation $m$ and $c$ is the parameter where the vein of $c'$ terminates. }

\begin{proposition}[Uniformity]
\label{prop:Uniformity}\lineclear
For every $\varepsilon >0$ there are $\overline g, \overline m, \ovl\kappa>0$ depending only on $\varepsilon$ but not on $c$ and $c'$ such that the following holds. If 
\begin{itemize}
\item $m\ge \overline m$, 
\item either $c$ is non-renormalizable, or the period of the largest small Mandelbrot set containing $c$ is at least $\overline g$, 
\item $\kappa\le \ovl \kappa$, and
\item $h$ is the entropy of the parameter $c$,
\end{itemize}
then
\[
N'(\kappa,n)\le C e^{(h+\eps)n}  
\]
for some constant $C>0$ depending on $c$.
\end{proposition}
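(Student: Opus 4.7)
The plan is to embed $S'(\kappa,n)$ into a product of sets whose cardinalities can each be controlled, so that the total bound matches $C\,e^{(h+\eps)n}$ once $\ovl m$, $\ovl g$, $\ovl\kappa$ are appropriately chosen.

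For each $s \in S'(\kappa,n)$ with pattern $P = (t_0, k_0, \ell_1, t_1, k_1, \dots, \ell_p, t_p, k_p)$, Lemma~\ref{lem:substitution} produces a relevant precritical path $s^* = b_0 a^*_1 b_1 \cdots a^*_p b_p c_p$ in $A$ of length at most $(1+\kappa)n$, with each $|a^*_i| \in \{m,\dots,3m\}$. The data $\bigl(s^*,\, P,\, (|a^*_i|)_{i=1}^p,\, (c_{i-1})_{i=1}^p,\, (a_i)_{i=1}^p\bigr)$ determines $s$: given $s^*$, $P$, and $(|a^*_i|)$, the pieces $b_0,\dots,b_p,c_p$ and the start/end states of each $a^*_i$ are read off $s^*$; the pair $(c_{i-1},a_i)$ then fills in the substitution made for $a^*_i$. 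This is the required injection.

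We next bound each factor separately. By the definition of $h$ as the entropy of $p_c$, for every $\eps' > 0$ there is $C_0 = C_0(c,\eps')$ so that the number of $s^*$'s of length at most $(1+\kappa)n$ is at most $C_0\, e^{(h+\eps')(1+\kappa)n}$. By Lemma~\ref{lem:PatternsGrowth}, the number of patterns $P$ of length $n$ is at most $e^{\delta(m)n}$ with $\delta(m)\to 0$ as $m\to\infty$. Using $p\le n/m$ and $\sum_i k_{i-1}\le n$: the number of length-tuples $(|a^*_i|)$ is at most $(2m+1)^{n/m}$; by Lemma~\ref{lem:ChoiceLessPaths} the number of detour-tuples $(a_i)$ is at most $2^{n/m}$; by Lemma~\ref{lem:c_iPaths} the number of renormalization-tuples $(c_{i-1})$ is at most $2^{n/g}$ (and trivially $1$ if $c$ is non-renormalizable). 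Multiplying,
\[
N'(\kappa,n) \;\le\; C_0 \exp\left\{n\left[(h+\eps')(1+\kappa) + \delta(m) + \tfrac{\log(2m+1)+\log 2}{m} + \tfrac{\log 2}{g}\right]\right\}.
\]
To match the target exponent $(h+\eps)n$, choose $\eps' = \eps/5$; pick $\ovl\kappa$ so that $(h+\eps')\ovl\kappa \le \eps/5$, which is uniform in $c$ because $h \le \log 2$; pick $\ovl m$ large enough that $\delta(m) + \tfrac{\log(2m+1)+\log 2}{m} \le 2\eps/5$ for $m \ge \ovl m$; pick $\ovl g$ large enough that $\tfrac{\log 2}{g} \le \eps/5$ for $g \ge \ovl g$. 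These thresholds depend only on $\eps$.

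The one delicate point will be justifying the injectivity claim: we need the decomposition of $s^*$ into $b_0, a^*_1, b_1, \dots, a^*_p, b_p, c_p$ to be determined by $(s^*, P, (|a^*_i|))$ alone, independently of the $(c_{i-1})$ and $(a_i)$. This holds because $s^*$ is a single word in $A$ whose subdivision is specified by the ordered lengths $(t_0, |a^*_1|, t_1, |a^*_2|, \dots, |a^*_p|, t_p, k_p)$. Once this bookkeeping is verified, the bounds from Lemmas~\ref{lem:ChoiceLessPaths} and~\ref{lem:c_iPaths} apply independently at each index $i$, and the uniformity argument above goes through.
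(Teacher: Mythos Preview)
Your argument is correct and follows essentially the same route as the paper's proof: inject $S'(\kappa,n)$ into a product determined by $s^*$, the pattern, and the choices for the $a_i$ and $c_i$, then bound each factor using Lemma~\ref{lem:substitution}, Lemma~\ref{lem:ChoiceLessPaths}, Lemma~\ref{lem:c_iPaths}, and Lemma~\ref{lem:PatternsGrowth}. The only cosmetic difference is that the paper packages the lengths $|a^*_i|$ into a second pattern $Q$ and bounds the number of pairs $(P,Q)$ via Lemma~\ref{lem:PatternsGrowth}, whereas you count the length-tuples $(|a^*_i|)$ directly as $(2m+1)^{n/m}$; both give a factor $e^{o_m(1)\cdot n}$ and the rest of the argument is identical.
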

\begin{proof}
Let $N(n)$ count the number of relevant precritical paths in $A$ of generation $n$. Since $h$ is the entropy of $c$, there is a constant $C_1>0$ such that
\[
N(n)\le C_1e^{(h+\eps/3)n} \;.
\]
Suppose first that $A_n\neq\emptyset$.
Our first claim is that there are at most $ 2^{{n}/{g}+{n}/{m}}$ precritical paths
\[
s=b_0c_0 a_1b_1c_1a_2b_2c_3\dots a_pb_pc_p
\]
with fixed $(b_i)_{i\le p}$ of a given pattern $P$ of length $n$. Indeed, the beginning of the $c_i$ is fixed by $b_i$, or by $a_i$ if $b_i$ is empty, and the end is at the $0$ state, so by Lemma~\ref{lem:c_iPaths} there are at most $2^{k_i/g}$ choices for each $c_i$ and in total at most $2^{n/g}$ choices for all $c_i$ combined (in the non-renormalizable case, the $c_i$ are empty and there is no choice at all). Each $a_i$ has at most two choices by Lemma~\ref{lem:ChoiceLessPaths}, and since their length is at least $m$, there are no more than $2^{n/m}$ such choices.  


By Lemma~\ref{lem:substitution} we may substitute $c_ia_i$ by $a^*_i$ with $m\le |a^*_i|\le 3m$ and get a precritical path
\[s^*=b_0a^*_1b_1a^*_2b_2\dots a^*_pb_p c_p\] 
in $A$ with length at most $ n+\kappa(P) n $. Denoting the length of $a_i^*$ by $\ell_i^*$, we call the numbers $P^*=(t_0,0, \ell^*_1, t_1,0,\ell^*_2,\dots,\ell^*_p,t_p,k_p)$ the pattern of $s^*$. 

Our next claim is that for fixed patterns $P$ and $P^*$, the number of triples $(s,(b_i)_{i\le p},s^*)$ is at most
\[
2^{{n}/{g}+ {n}/{m}}C_1 e^{(h+\eps/3)(n+\ovl\kappa n)}=C_1e^{((h+\eps/3)(1+\ovl \kappa)+ 1/g+1/m)n}
\;.
\]
Indeed, the number of paths $s$ for a given pattern $P$ with fixed $b_i$ is at most $2^{{n}/{g}+ {n}/{m}}$. The length of $s^*$ is specified by $P^*$; denote it by $\mu$.  Since each $s^*$ is a precritical path in $A$ and we have we have $\mu\le n+\kappa(P)n$ by Lemma~\ref{lem:substitution},  the number of different $s^*$ is bounded by 
\[
N(\mu)\le  C_1e^{(h +\eps/3)(n+\kappa (P)n)}\le C_1 e^{(h+\eps/3)(n+\ovl\kappa n)} 
\;.
\]
Since the $b_i$ are determined by the paths $s^*$, the claim is proved.

If $g$ and $m$ are sufficiently large and $\ovl \kappa$ is sufficiently small, then $(h+\eps/3)(1+\ovl \kappa)+ 1/g+1/m\le h+2\eps/3$, and the number of triples $(s,(b_i)_{i\le p},s^*)$ {(still for fixed patterns $P$ and $P^*$)} is bounded by
\[
C_1e^{(h+2\eps/3)n}
\;.
\]

Since every $s\in S'(\kappa,n)$ is a part of at least one triple $(s,(b_i)_{i\le p},s^*)$ for some patterns $P$ and $P^*$ with $\kappa(P)\le \kappa$ we get the estimate 
\[
N'(\kappa, n)\le  \left|\{(P,P^*)\}\right| C_1  e^{(h+2\eps/3) n} 
\;,
\]
where $\left|\{(P,P^*)\}\right|$ denotes the number of pairs of patterns $P$ and $P^*$ with $|P|=n$ and $|P^*|\le n+\kappa(P)n$. By Lemma~\ref{lem:PatternsGrowth}, we have $\left|\{(P,P^*)\}\right| \le C_2e^{(\eps/3) n}$ for some constant $C_2>0$ when $m$ is sufficiently large. We get 
\[
N'(\kappa, n)\le C_1C_2 \,  e^{(h+\eps) n} 
\;;
\]
this finishes the proof if $A_n\neq\emptyset$.

The case $A_n=\emptyset$ is simpler: here we have all $b_i=\emptyset$. Therefore, every $s\in A'$ is of the form
\[
s=c_0 a_1c_1a_2c_2\dots a_pc_p
\]
and it is easy to see that there are at most $ 2^{{n}/{g}+{n}/{m}}$ precritical paths in $A'$. (By Lemma~\ref{lem:c_iPaths} there are at most $2^{k_i/g}$ choices for each $c_i$ and in total at most $2^{n/g}$ choices for all $c_i$ combined. Each $a_i$ has at most two choices by Lemma~\ref{lem:ChoiceLessPaths}, and since their length is at least $m$, there are no more than $2^{n/m}$ such choices.)  Therefore, if $g\ge \bar g$ and $m\ge \bar m$ are sufficiently big, then 
\[
N'(\kappa, n)\le C \,  e^{\eps n} 
\;. \qedhere
\]
\end{proof}


\subsection{Continuity at Non-Renormalizable Irrational Endpoints}
Let $c_{\infty}$ be a non-dyadic endpoint of $\M$ that is not renormalizable; the case that $c_\infty$ is renormalizable will be treated in Section~\ref{Sub:Renormalizable}.

There is a sequence of dyadic veins $[c_i,c'_i]$ approximating $c_{\infty}$ in the following way \reminder{should we replace $i$ by $\nu$, because $i$ is in different use?}
\begin{itemize}
\item $c_1\prec c_2\prec\dots \prec c_{\infty}$;  
\item $c_{i+1}\in [c_i,c'_i]$; and
\item $ \dots c_i \lhd \dots \lhd  c'_2\lhd c'_1$.
\end{itemize}
Figure~\ref{Fig:IrrationalEndpoint} illustrates the arrangement of these points.
Note that once $c_1$ is chosen, the remaining parameters are uniquely determined: $c'_i$ is the dyadic of least generation with $c'_i\succ c_i$ {in the same subwake of $c_i$ as $c_\infty$}, and $c_{i+1}$ is the branch point in the vein of $c'_i$ where the vein to $c_\infty$ branches off.

\begin{figure}[htb]
\framebox{\includegraphics{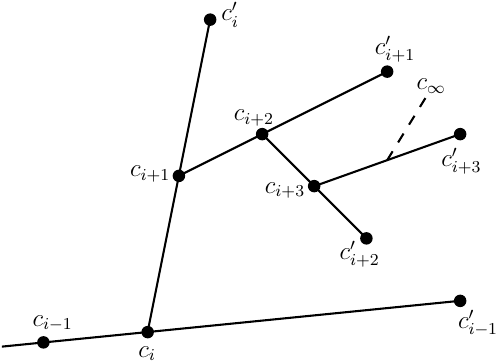}}
\caption{The relative (combinatorial) positions of the parameters $c_i$, $c'_i$, and $c_\infty$  used in the proof.}
\label{Fig:IrrationalEndpoint}
\end{figure}

It may be that individual $c_i$ are renormalizable, but only finitely many of them are $n$-renormalizable for any fixed $n$, so the renormalization periods of $c_i$ (if any) tend to $\infty$; hence the prerequisite of Proposition~\ref{prop:Uniformity} is satisfied for sufficiently large $i$; we will use this in Proposition~\ref{lem:RefOfprop:Uniformity}.

Similar to the previous discussion we specify the following objects:
\begin{itemize}
\item $H'_i$ is the Hubbard tree of $c'_i$ with dynamics $p_{c'_i}:H'_i\to H'_i$ \,;
\item
$S'_{i}(n)\subset [\alpha,-\alpha]\subset H'_i$ is the set of relevant precritical points of generation $n$ in $[\alpha,-\alpha]$  for the parameter $c'_i$\,;
\item 
$N'_{i}(n):= |S'_i(n)|$ is its cardinality;
\item 
$h_i=\tilde h(c_i)$ and $h'_i=\tilde h(c'_i)$ are the entropies;
\item 
$m_i$ is the generation of the dyadic parameter $c'_i$\,; 
 and
\item for $j\le i$ we denote by $x_j^{(i)}\in H_i$ the dynamical counterpart of $c_j$ in $p_{c'_i}:H'_i\to H'_i$ as in Definition~\ref{Def:DynamCounterpart}.  
\end{itemize}


We will do our considerations for a fixed Hubbard tree $H'_i$ and often suppress $i$ from the notation. 
We will now introduce the sets $S'_i(j,\kappa, n)$ for all $j\le i$; these are the sets of relevant precritical points of generation $n$ with uncertainty at most $\kappa$ (in analogy to $S'(\kappa,n)$ in Section~\ref{Sub:CountingPaths}), but subject to a certain relation with respect to the points $x_j^{(i)}$. The special case $j=i$ is exactly the case that was considered in Section~\ref{Sub:CountingPaths}.

Specifically, consider a precritical point $y\in S'_i(n)\subset H'_i$.  Let  \begin{equation}
 \label{eq:Iter:K:ell}0=\tilde \ell_0 <\tilde t_0<\tilde \ell_1<\tilde t_1< \dots < \tilde t_p=n
 \end{equation} be the iteration times of $y$ (depending on $j$) uniquely specified as follows:
\begin{itemize}
\item 
$\tilde t_k>\tilde \ell_k$  is the first time so that $p^{\circ \tilde t_k}_{c'_i}(y) \in [ c'_i, x_{j}^{(i)}]$;
\item 
$\tilde \ell_k>\tilde t_{k-1}$  is the first time so that $p^{\circ \tilde \ell_k}_{c'_i}(y)\in [\alpha,-\alpha]$.
\end{itemize}


We also set $\ell_k:=\tilde \ell_k- \tilde t_{k-1}$ and $t_k:= \tilde t_k-\tilde \ell_k$. Clearly, the sequence \eqref{eq:Iter:K:ell} is uniquely specified by $(t_0,\ell_1, \dots, t_p)$.
 We define the \emph{uncertainty of $y$ with respect to $x_j^{(i)}$} as 
\[
\kappa_j(y):= \frac 1 n \sum_{\ell_k<3m_j}(3m_j- (\ell_k+1))
= \frac 1 n  \sum_k  \max(0,3m_j-(\ell_k+1))
\]
(note that here we start counting at the interval $[c'_i,x_j^{(i)}]$, while in Section~\ref{Sub:CountingPaths} we have $\ell_i=|a_i|$, which starts at the $0$-state before going to $[c',x]$; hence in order to be consistent here we have to use $\ell_k +1$). 


We denote by $S'_i(j,\kappa,n)$ the set of all $y\in S'_i(n)$ such that $\kappa_j(y)<\kappa$.

Let us also define 
\[
I_j(y):= \bigcup_{\ell_k<3m_j} \left\{\tilde t_{k-1}, \tilde t_{k-1}+1,\dots, \tilde \ell_{k} \right\}
= \bigcup_{\ell_k<3m_j} \left[ \tilde t_{k-1},\tilde \ell_k \right]\cap\Z
\;;
\]
this is the set of iteration times without choice (from $[c'_i,x^{(i)}_j]$ to $[\alpha,-\alpha]$).


\begin{lemma}[Blocks of Iteration Times]
\label{lem:I_j} \lineclear
In the Hubbard tree $H'_i$, consider some $y\in S'_i(n)$.
If $m_j > 3 m_{j'}$ for some $j>j'$, then the corresponding sets $I_j(y)$ and $I_{j'}(y)$ are disjoint.

Furthermore, 
\( \displaystyle \kappa_j(y) \le  \frac 2 n |I_j(y)|\) for all $j\ge i$. 
\end{lemma}

\begin{proof}
Let us fist show that for all $j\ge i$, the set $I_j(y)$ is a union of blocks of consecutive numbers so that each block has length $\ell_k+1\in[m_j,\dots, 3m_j]$, and 
\begin{itemize}
\item
 its first number $ \tilde t_{k-1}$ is the unique number $t$ in the block that satisfies $p_{c'_i}^{\circ t}(y)\in [c'_i, \alpha]$;  
\item
its last number $\tilde\ell_k$ is the unique number $\ell$ in the block that satisfies 
$p_{c'_i}^{\circ \ell}(y)\in [\alpha,-\alpha]$.
\end{itemize}
We have $\ell_k<3m_j$ by definition of $I_j(y)$ and we need to prove the lower bound $\ell_k\ge m_j -1$.

For every integer $k\ge 0$, let $T_k\subset H'_i$ be the minimal tree connecting the $\alpha$ fixed point to all dyadic endpoints of generation at most $k$. In particular, $[\alpha,-\alpha]\subset T_0$. By~\eqref{eq:T_k-2InT_k-1} we have $p_{c'_i}^{-1}(T_{k-2})\subset T_{k-1}$. 

By construction, the parameters $c'_i$ and $c'_{j-1}$ are in different sublimbs of $c_{j}$. Thus, by Lemma~\ref{Lem:DirectlySubordinateDynamics}, in the tree $H'_i$ the arc $[c'_i,x_{j}^{(i)})$ is disjoint from $T_{m_j-1 }$; and we get $[c'_i,x_{j}^{(i)})\cap p_{c'_i}^{-k} [\alpha,-\alpha]=\emptyset$ for all $k< m_j$. This shows that $\ell_k\ge m_j -1$. 

By definition of $ \tilde t_{k-1}$ we certainly have $p_{c'_i}^{\circ \tilde t_{k-1}}(y)\in[c'_i,x^{(i)}_j]\subset [c'_i,\alpha]$, and since $p_{c'_i}^{-1}([c'_i,\alpha])=[\alpha,-\alpha]$, the block would end before the orbit could enter $[c'_i,\alpha]$
again. The claim about the last number is obvious.

Since any block $I_j(y)$ describes a trajectory from $[c'_i,\alpha]$ to $[\alpha,-\alpha]$, the given properties imply that any two blocks $I_j(y)$ and $I_{j'}(y)$ are either disjoint or identical. If $m_j > 3 m_{j'}$, then any block of $I_j(y)$ has greater length than any block of $I_{j'}(y)$, and consequently $I_{j}(y)$ and $I_{j'}(y)$ are disjoint. 

Finally, $3m_j-(\ell_k+1) \le 2m_j \le 2(\ell_k+1)$, and taking the sum we conclude $\kappa_j(y) \le  \frac 2 n |I_j(y)|$ as claimed.
\end{proof}


\begin{lemma}[Surgery Respects Uncertainty]
\label{lem:MonotIncl} \lineclear
For $j<i$ the injection $B:S'_{i}(n)\to S'_{i-1}(n)$ of Proposition~\ref{Prop:InjectionPrecritical} injects $S'_i(j,\kappa,n)$ into $S'_{i-1}(j,\kappa,n)$. 
\end{lemma}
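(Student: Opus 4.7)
The plan is to show that the invariant $\kappa_j(\cdot)$ is preserved pointwise by $B$, i.e.\ $\kappa_j(B(y))=\kappa_j(y)$ for every $y\in S'_i(n)$; the inclusion $B(S'_i(j,\kappa,n))\subset S'_{i-1}(j,\kappa,n)$ will then be immediate. Setup-wise, $c'_i\lhd c'_{i-1}$ (from the chain of subordinations described in the preamble to this subsection), so Proposition~\ref{Prop:InjectionPrecritical} applies with $c:=c'_i$ and $c':=c'_{i-1}$ and supplies $B$ together with Properties~(A)--(C). Because $j<i$ the chain also gives $c_j\prec c_i\prec c'_i$ and $c_j\prec c_{i-1}\prec c'_{i-1}$, so Property~(B) may be invoked with $c_*:=c_j$ (where it exchanges $x_j^{(i)}$ with $x_j^{(i-1)}$) and also with $c_*:=0$, whose dynamic counterpart is the $\alpha$ fixed point in each plane.

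The main step is to verify, for every $k\in\{0,1,\dots,n\}$, the two equivalences: (a) $p_{c'_i}^{\circ k}(y)\in[c'_i,x_j^{(i)}]$ if and only if $p_{c'_{i-1}}^{\circ k}(B(y))\in[c'_{i-1},x_j^{(i-1)}]$; and (b) $p_{c'_i}^{\circ k}(y)\in[\alpha,-\alpha]$ if and only if $p_{c'_{i-1}}^{\circ k}(B(y))\in[\alpha,-\alpha]$. Both target intervals sit inside $[c'_i,-\alpha]$ (resp.\ $[c'_{i-1},-\alpha]$), so Property~(A) gives the equivalence of visits to this larger interval together with the intertwining identity $B(p_{c'_i}^{\circ k}(y))=p_{c'_{i-1}}^{\circ k}(B(y))$ whenever that common visit occurs. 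Property~(B), applied to the precritical point $p_{c'_i}^{\circ k}(y)$ with $c_*=c_j$ and again with $c_*=0$, then pins down on which side of $x_j^{(i)}$ (resp.\ $\alpha$) the orbit point lies in each plane; assembling these pieces yields (a) and (b).

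Once the visit pattern is matched pointwise, the sequence $0=\tilde\ell_0<\tilde t_0<\dots<\tilde t_p=n$ from \eqref{eq:Iter:K:ell} is the same for $y$ and for $B(y)$, and therefore so are all the gaps $\ell_k$; hence $\kappa_j(y)=\kappa_j(B(y))$, and in particular $\kappa_j(y)<\kappa$ forces $\kappa_j(B(y))<\kappa$, giving the desired inclusion. The main obstacle I anticipate is purely bookkeeping: one must rule out the degenerate coincidences $p^{\circ k}(y)=x_j^{(i)}$ or $=\alpha$ that would otherwise make the side-of-$x_j$ (or side-of-$\alpha$) argument ambiguous. This is harmless because $x_j^{(i)}$ is (pre)periodic but not precritical while each $p^{\circ k}(y)$ with $k<n$ is precritical, and the terminal iterate $p^{\circ n}(y)$ is the critical value, which sits at the $c'_i$-end of $[c'_i,x_j^{(i)}]$ and matches $\tilde t_p=n$ in both planes. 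One also needs the standard fact that $\alpha$ separates the critical value from $-\alpha$ in the dyadic Hubbard tree, so that $[\alpha,-\alpha]\subset[c'_i,-\alpha]$ and Property~(A) can be invoked as above.
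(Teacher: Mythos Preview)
Your proposal is correct and follows essentially the same approach as the paper: both arguments use Property~(A) to get the intertwining $B(p_{c'_i}^{\circ k}(y))=p_{c'_{i-1}}^{\circ k}(B(y))$ on visits to $[c',-\alpha]$, and then invoke Property~(B) with $c_*=0$ (for the $[\alpha,-\alpha]$ equivalence) and with $c_*=c_j$ (for the $[c',x_j]$ equivalence), concluding that the sequence~\eqref{eq:Iter:K:ell} and hence $\kappa_j$ are preserved. Your version is in fact slightly more explicit than the paper's, spelling out why $[\alpha,-\alpha]\subset[c'_i,-\alpha]$ is needed before Property~(A) can be applied and addressing the boundary cases $p^{\circ k}(y)\in\{x_j^{(i)},\alpha\}$, which the paper leaves implicit.
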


\begin{proof}
We will show that the injection $B$ respects the sequence \eqref{eq:Iter:K:ell}: the orbits of $y$ and $B(y)$ visit the intervals defining this sequence at the same times. This immediately implies that the uncertainty $\kappa$ is preserved. 

Recall that $x_j^{(i)}$ and $x_j^{(i-1)}$ are the dynamical counterparts of $c_j$ in the dynamical planes of $p_{c'_i}$ and $p_{c'_{i-1}}$ respectively. Consider $y\in S'_i(j,\kappa,n)$. It follows from Proposition~\ref{Prop:InjectionPrecritical} part (B), applied to $c_*=0$ and thus $x_*=\alpha$ and $x'_*=\alpha$, that 
\[
p^{\circ t}_{c'_{i}}(y)\in [\alpha,-\alpha] \;\;\text{ if and only if }\;\; p^{\circ t}_{c'_{i-1}}(B(y))\in [\alpha,-\alpha]
\;.
\] 
This takes care of the first pair of corresponding intervals. For the second pair, it follows from Proposition~\ref{Prop:InjectionPrecritical} part (A) that for all $t\ge 0$
\[
p^{\circ t}_{c'_{i}}(y)\in [c'_{i},-\alpha] \;\;\text{ if and only if }\;\; p^{\circ t}_{c'_{i-1}}(B(y))\in [c'_{i-1},-\alpha]
\;.
\]
 By part (B), applied to $c_*=c_j$, it follows that 
\[
p^{\circ t}_{c'_{i}}(y)\in [x_j^{(i)},-\alpha] \;\;\text{ if and only if }\;\; p^{\circ t}_{c'_{i-1}}(B(y))\in [x_j^{(i-1)},-\alpha]
\;.
\] 
Since $[x_j^{(i)},-\alpha]\subset [c'_{i},-\alpha]$ and $[x_j^{(i-1)},-\alpha]\subset [c'_{i-1},-\alpha]$, we conclude that 
\[
p^{\circ t}_{c'_{i}}(y)\in [c'_{i},x_j^{(i)}]  \;\;\text{ if and only if }\;\; p^{\circ t}_{c'_{i-1}}(B(y))\in [c'_{i-1},x_j^{(i-1)}]
\;.
\]

Therefore, our surgery respects the sequence \eqref{eq:Iter:K:ell}, and thus $\kappa(y)=\kappa(B(y))$ and $B(y)\subset S_{i-1}(j,\kappa,n)$. 
\end{proof}

\begin{lemma}[Small Uncertainty]
\label{lem:SmallKappa} \lineclear
For every $\delta>0$ and for every $i'\ge 0$ there is an $i''>i'$ such that for every $n\ge 0$ we have 
\[
S'_{i''}(n)\subset \bigcup_{j\in \{i',\dots ,i''\}} S'_{i''}(j,\delta,n).
\]
\end{lemma}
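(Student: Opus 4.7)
The plan is a scale-separation argument: pass to a sparse sub-sequence $j_1<\dots<j_K$ in $\{i',\dots,i''\}$ along which $(m_j)$ grows faster than geometrically, observe that each ``excursion'' of the orbit (segment between consecutive returns to $[\alpha,-\alpha]$) contributes to at most one $I_{j_k}(y)$, and then pigeonhole.

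First I note that the generations $(m_j)$ are strictly increasing by the remark after Definition~\ref{Def:DirectlySubordinate} (since $c'_{j+1}\lhd c'_j$ forces the angle of $c'_j$ to have strictly smaller denominator), so $m_j\to\infty$. Given $\delta>0$ and $i'\ge0$, I fix $K:=\lceil 5/\delta\rceil$ and choose $i''$ large enough that, starting from $j_1:=i'$, I can inductively pick indices
\[
i'=j_1<j_2<\dots<j_K\le i'' \qquad \text{with} \qquad m_{j_{k+1}}>3\,m_{j_k} \text{ for every } k<K.
\]

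Now fix $n\ge 0$ and $y\in S'_{i''}(n)$. Using the relation $p_{c'_{i''}}^{-1}([c'_{i''},\alpha])=[\alpha,-\alpha]$ noted in the proof of Lemma~\ref{lem:I_j}, each excursion of the orbit between two consecutive visits to $[\alpha,-\alpha]$ can enter $[c'_{i''},\alpha]$ at most once, and if it does, it does so at the very first iterate of the excursion. Consider such an excursion $E=[a+1,b]\subset\{0,\dots,n\}$ with $p_{c'_{i''}}^{a+1}(y)\in[c'_{i''},\alpha]$; let $L:=b-a$ be its length and let $j^{*}$ be the largest $j$ with $p_{c'_{i''}}^{a+1}(y)\in[c'_{i''},x_{j}^{(i'')}]$. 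By Lemma~\ref{lem:I_j}, the excursion contributes the block $E$ to $I_{j_k}(y)$ if and only if $j_k\le j^{*}$ and $L\le 3m_{j_k}$, and in that case the length satisfies $L\in[m_{j_k},3m_{j_k}]$. If $E$ were counted for two indices $j_k<j_{k'}$ of my sub-sequence, then $L\le 3m_{j_k}<m_{j_{k'}}\le L$ by the scale-separation condition, a contradiction. Hence each excursion is counted for at most one $j_k$; since different excursions occupy pairwise disjoint sub-intervals of $\{0,\dots,n\}$, summing over all of them gives
\[
\sum_{k=1}^{K}|I_{j_k}(y)| \;\le\; n+1.
\]

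Pigeonhole now yields some $k$ with $|I_{j_k}(y)|\le (n+1)/K$, and combining with the estimate $\kappa_{j}(y)\le (2/n)|I_{j}(y)|$ from Lemma~\ref{lem:I_j} I obtain $\kappa_{j_k}(y)\le 2(n+1)/(Kn)<\delta$ for every $n\ge 1$ by the choice of $K$; the case $n=0$ is trivial since then every $I_{j}(y)$ is empty. Therefore $y\in S'_{i''}(j_k,\delta,n)$ for some $j_k\in\{i',\dots,i''\}$, which is the required inclusion. I expect no real obstacle: the whole argument rests on the two-sided length bound $L\in[m_j,3m_j]$ supplied by Lemma~\ref{lem:I_j} and on the scale-separation condition $m_{j_{k+1}}>3m_{j_k}$, after which a disjointness observation and a one-line pigeonhole finish the job.
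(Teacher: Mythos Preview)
Your proof is correct and takes essentially the same approach as the paper: choose a sparse subsequence with $m_{j_{k+1}}>3m_{j_k}$, use the two-sided length bound from Lemma~\ref{lem:I_j} to conclude that the sets $I_{j_k}(y)$ are pairwise disjoint, and pigeonhole. Your ``excursion'' language is just a rephrasing of the paper's block argument, and your constant $K=\lceil 5/\delta\rceil$ differs from the paper's $h>2/\delta$ only because you bound by $n+1$ rather than $n$.
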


\begin{proof}
Choose a subsequence of indices $i'=i_1< i_2< \dots < i_\nu$ so that $3m_{i_r}< m_{i_{r+1}}$ and $\nu> 2/\delta$. We show that $i'':= i_\nu$ satisfies the claim of the lemma. It is sufficient to show that for every $y\in S'_{i_\nu}(n)$ there is a $\nu'\le \nu$ such that $\kappa_{i_{\nu'}}(y)<\delta$.  

By Lemma~\ref{lem:I_j} the sets $I_{i_1}(y), I_{i_2}(y), \dots , I_{i_\nu}(y)$ are pairwise disjoint because $3m_{i_r}< m_{i_{r+1}}$. 
Hence
\[
\frac 1 n (|I_{i_1}(y)|+ |I_{i_2}(y)|) + \dots + |I_{i_\nu}(y)|) \le 1.
\]
Since $\kappa_j(y) \le  \frac 2 n |I_j(y)|$ (Lemma~\ref{lem:I_j} again), we have 
\[
\kappa_{i_1}(y)+\kappa_{i_2}(y)+\dots +\kappa_{i_\nu}(y)\le 2 < \nu\delta
\;;
\]
this implies that $\kappa_{i_{\nu'}}(y)<\delta$ for some $\nu'\le \nu$. 
\end{proof}


The next lemma is a corollary of Proposition~\ref{prop:Uniformity}. 
\begin{lemma}[Bound on Precritical Points]
\label{lem:RefOfprop:Uniformity}\lineclear
For every $\eps>0$ there are $i'\ge 0$ and $\overline \kappa>0$ such that if $ j \ge i'$ and $\kappa\le \overline \kappa$, then 
\[|S'_j(j,\kappa,n)|\le C_j e^{(\eps+h_j) n}\] for all $n>0$ and some constant $C_j>0$.
\end{lemma}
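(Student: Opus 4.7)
The plan is to derive this lemma as a direct application of Proposition~\ref{prop:Uniformity} to the pair $(c,c')=(c_j,c'_j)$ for $j$ sufficiently large. Given $\eps>0$, I would first apply the proposition to produce the universal constants $\overline g,\overline m,\overline\kappa>0$; the $\overline\kappa$ in the statement below will be exactly the one produced here. The heart of the matter is recognizing that $|S'_j(j,\kappa,n)|$ is literally the quantity $N'(\kappa,n)$ of Proposition~\ref{prop:Uniformity} when applied to the vein $[c_j,c'_j]$: the edge $[c'_j,x_j^{(j)}]$ plays the role of the detour edge $[c',x]$ (with $x_j^{(j)}$ the dynamical counterpart of $c_j$), the generation $m_j$ plays the role of $m$, and the times $(\tilde t_k,\tilde\ell_k)$ in the definition of $\kappa_j(y)$ record exactly the combinatorial pattern in \eqref{eq:DecompOfs} up to an off-by-one that does not affect the asymptotics.

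Once the identification is established, it suffices to arrange, for all $j\ge i'$, that (i) $m_j\ge\overline m$, and (ii) either $c_j$ is non-renormalizable, or the period of the biggest small Mandelbrot set containing $c_j$ is at least $\overline g$. Condition (i) is immediate because $c_\infty$ is non-dyadic, which forces $m_j\to\infty$ along the approximating sequence. For (ii) I would invoke the standing assumption that all $c_i$ lie outside immediate satellite copies of $\M$, together with the chain $\cdots\lhd c_j\lhd\cdots\lhd c'_2\lhd c'_1$, which forces the branch points $c_j$ into ever deeper combinatorial regions approaching $c_\infty$: either $c_j$ is eventually non-renormalizable, or its shallowest renormalization period exceeds $\overline g$ for $j$ large. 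Taking $i'$ large enough to secure both (i) and (ii), Proposition~\ref{prop:Uniformity} immediately yields $|S'_j(j,\kappa,n)|\le C_j e^{(h_j+\eps)n}$ for all $n>0$ and all $\kappa\le\overline\kappa$, with $C_j$ the $c$-dependent constant from the proposition and $h_j=\tilde h(c_j)$.

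The main obstacle is the verification of hypothesis (ii). In the case that $c_\infty$ is non-renormalizable, this is automatic by stability of non-renormalizability along veins. The substantive case is when $c_\infty$ is renormalizable: here one must leverage the assumption that $c_i$ avoids immediate satellite copies, so that the renormalization structure of $c_j$ is controlled and the shallow small-period renormalizations are eventually bypassed as $j$ grows. Without this, the $2^{k/g}$ factor from Lemma~\ref{lem:c_iPaths} in the proof of Proposition~\ref{prop:Uniformity} would fail to be absorbable into $\eps$, and the desired bound would degrade.
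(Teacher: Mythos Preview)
Your proposal is correct and follows essentially the same route as the paper: apply Proposition~\ref{prop:Uniformity} to the pair $(c,c')=(c_j,c'_j)$, take $\overline\kappa$ from the proposition, choose $i'$ large enough that $m_j\ge\overline m$ and the renormalization-period hypothesis holds, and identify $|S'_j(j,\kappa,n)|$ with $N'(\kappa,n)$. Your discussion of hypothesis~(ii) is in fact more detailed than the paper's, which simply asserts the condition can be arranged.

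One refinement: your phrase ``up to an off-by-one that does not affect the asymptotics'' slightly understates the situation and, taken literally, would not suffice. The paper is more careful here: under the bijection between precritical points $y$ and precritical paths $s$ in $A'$, one has $\ell_k+1=|a_k|$ exactly, and since the formula for $\kappa_j(y)$ uses $3m_j-(\ell_k+1)$ while $\kappa(P)$ uses $3m-|a_i|$, the two uncertainties agree \emph{on the nose}. This exact equality is what lets you transport the same threshold $\overline\kappa$ without adjustment; a merely asymptotic match would not directly give the identification $|S'_j(j,\kappa,n)|=N'(\kappa,n)$ for each fixed $\kappa\le\overline\kappa$, and you would have to argue separately that shrinking $\overline\kappa$ absorbs the discrepancy.
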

\begin{proof}
For given $\eps>0$ fix $\overline g, \overline m, \ovl\kappa>0 $ as in Proposition~\ref{prop:Uniformity} so that $N'(\kappa,n)\le Ce^{(h+\eps)n}$. Since $c_{\infty}$ is not renormalizable, we may choose $i'$ large enough such that for all $j\ge i'$
\begin{itemize}
\item $m_j\ge \overline m$; and
\item $c_j$ is either non-renormalizable
 or the renormalization period of $c_j$ is at least $\overline g$ (we can make this assumption because $c_\infty$ is non-renormalizable). 
\end{itemize}

We will now apply Proposition~\ref{prop:Uniformity} to the pair $c:=c_j$ and $c':= c'_j$; then $h_j=h$ is the entropy of $c=c_j$. 

 Observe first that $|S'_j(j, \kappa,n)| = |S'( \kappa, n)|= N'(\kappa,n)$ after the substitution. Indeed, every relevant pre-critical point of $p_{c'_j}$ is uniquely characterized by a precritical path in $A'$ (again by a fundamental property of the symbolic dynamics because the critical point is in the interior of the $0$-state of $A'$). This bijection preserves the uncertainties: if $y\in S'_j(j,\kappa,n)$ with sequence~\eqref{eq:Iter:K:ell} is identified with a relevant precritical path $s$ with decomposition~\eqref{eq:DecompOfs}, then $\ell_k +1 =\tilde \ell_k - \tilde t_k +1= |a_k|$. Hence $y$ and $s$ have the same uncertainties, and $S'_j(j, \kappa,n)$ and $S'( \kappa, n)$ are in bijection. 

Now the claim immediately follows from Proposition~\ref{prop:Uniformity}.
\end{proof}

\begin{theorem}[Continuity on Vein to $c_{\infty}$, Non-Renormalizable Case]
\label{thm:ContAtIrrNonRen}
If $c_\infty$ is a non-renormalizable combinatorial endpoint of $\M$, then core entropy is continuous along the combinatorial arc $[0,c_\infty]$.
\end{theorem}

\begin{proof}
We have parameters $c_i$ and $c'_i$ as introduced at the beginning of the section, and we have
\[
\tilde h(c_i) \le \tilde h(c_{i+1}) \le  \tilde h(c_\infty)\le \tilde h(c'_{i+1})\le  \tilde h(c'_i) 
\] 
for all $i$ by radial monotonicity of core entropy (for the first two inequalities), and by Theorem~\ref{Thm:TiozzoConj} (for the last two). 
Therefore, it suffices to prove the following claim:
\emph{
for every $\eps>0$ there is an $\bar i\ge 1$ such that $h'_{i}-h_{i}\le \eps$ for all $i\ge \bar i$.
}

Choose $\overline\kappa$ and $i'$ as in Lemma~\ref{lem:RefOfprop:Uniformity}.
By Lemma~\ref{lem:SmallKappa} there is an $i''\ge i'$ such that  
\[
S'_{i''}(n)\subset \bigcup_{j\in \{i',\dots ,i''\}} S'_{i''}(j,\overline \kappa,n).
\]
By Lemma~\ref{lem:MonotIncl} there is an injection from $S'_{i''}(j,\overline \kappa,n)$ into $S'_j(j,\overline \kappa,n)$ for all $j\in \{i',\dots ,i''\}$. Therefore,
\[
N'_{i''}(n)=|S'_{i''}(n)|\le  \sum_{j=i'}^{i''} |S'_j(j,\overline \kappa,n)|.
\]
By Lemma~\ref{lem:RefOfprop:Uniformity} we have $|S'_j(j,\overline \kappa,n)|\le C_j e^{(\eps+h_j) n}$. Thus
\[
N'_{i''}(n)\le  \sum_{j=i'}^{i''}C_j e^{(\eps+h_j) n} \le \left( \sum_{j=i'}^{i''}C_j\right) e^{(\eps +h_{i''})n} 
\]because $h_{i''}\ge h_{j}$ by monotonicity.
This proves that $h'_{i''}-h_{i''}\le \eps$, and since the sequence $h'_i-h_i$ is decreasing we have $h'_{i}-h_{i}\le \eps$ for all $i\ge i''$.
\end{proof}


\subsection{The Renormalizable Case}
\label{Sub:Renormalizable}

In this section, we assume that $c_\infty$ is renormalizable. 
In order to formulate our statements, we need to briefly review well known facts on renormalization; compare \cite{Polylike, McMullenRenormalization,MiSelfSim,MiRenorm}. 
If $p_c$ is simple $m$-renormalizable, then there exists a ``little Mandelbrot set'' $\M'\subset\M$ consisting of $m$-renormalizable parameters with $c\in\M'$ and a straightening homeomorphism $\chi\colon\M'\to\M$ so that $p_{\chi(c)}$ in the neighborhood of its filled-in Julia set is hybrid equivalent to $p_c^{\circ m}$ on a neighborhood of the little filled-in Julia set (except possibly at the root point $\chi^{-1}(1/4)$). The little Mandelbrot set has a main center $c_0:=\chi^{-1}(0)$ with a superattracting orbit of period $m$. In this case, we say that ``the parameter $c$ is $c_0$ tuned with $\chi(c)$''. Dynamically, the filled-in Julia set $K_{c}$ equals $K_{c_0}$ in which every Fatou component is replaced by a copy of $K_{\chi(c)}$.


\begin{lemma}[Renormalization and Entropy]
\label{Lem:Renormalization} \lineclear
If $\M'$ is a small copy of $\M$ consisting of $m$-renormalizable parameters, then the straightening map  $\chi:\M'\to \M$ satisfies
\[
\tilde h(c)=\max\left(\tilde h(c_0),\frac 1 m\tilde h(\chi( c))\right)
\;
\]
for all $c\in \M'$.
\end{lemma}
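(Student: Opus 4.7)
The plan is to use the straightening map $\chi$ together with the immediate satellite structure to identify relevant precritical points of $p_c$ with those of $p_{\tilde c}$ (where $\tilde c:=\chi(c)$), up to a factor of $m$ in generation. Writing $g:=p_c^{\circ m}$, fix a cycle $K_0,K_1,\dots,K_{m-1}$ of little filled Julia sets with $0\in K_0$, $c\in K_1$, and $p_c(K_i)\subseteq K_{i+1\bmod m}$; the map $p_c\colon K_0\to K_1$ has degree $2$ while $p_c\colon K_i\to K_{i+1}$ are homeomorphisms for $i\ne 0$, and the $K_i$ meet pairwise only at $\alpha$. The straightening provides a conjugacy $h\colon K_0\to K(p_{\tilde c})$ between $g|_{K_0}$ and $p_{\tilde c}$. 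The key geometric observation is that both $\alpha$ and $-\alpha$ lie in $K_0$: since $\alpha\in K_0\cap K_1$ and $p_c\colon K_0\to K_1$ has degree $2$, the two $p_c$-preimages of $\alpha$ in $K_0$ are $\alpha$ and $-\alpha$. Because $g(-\alpha)=p_c^{\circ(m-1)}(\alpha)=\alpha$, the point $-\alpha$ is the ``other'' $g$-preimage of $\alpha$ in $K_0$, so $h(\alpha)=\beta$ and $h(-\alpha)=-\beta$; this is exactly the equivalence recalled just before the lemma, and it identifies the arc $[\alpha,-\alpha]\subset K_0$ with $[\beta,-\beta]\subset K(p_{\tilde c})$.

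The second step is an orbit-parity argument. For any $z\in[\alpha,-\alpha]\subset K_0$, the orbit visits the cycle cyclically, $p_c^{\circ j}(z)\in K_{j\bmod m}$. Since $0\in K_0$ and $0\ne\alpha$, we have $0\notin K_i$ for $i\ne 0$, so $p_c^{\circ n}(z)=0$ forces $m\mid n$. For $n=mk$, the equation $p_c^{\circ n}(z)=0$ is equivalent to $g^{\circ k}(z)=0$, i.e., $z$ is a generation-$k$ precritical point of $g$, and via $h$ corresponds bijectively to a generation-$k$ precritical point of $p_{\tilde c}$ on $[\beta,-\beta]$. Denoting this latter count by $\widehat N_{\tilde c}(k)$, we obtain
\[
\tilde N_c(mk)=\widehat N_{\tilde c}(k)\qquad\text{and}\qquad \tilde N_c(n)=0\ \text{if}\ m\nmid n.
\]

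To conclude, I would apply Lemma~\ref{lem:DifferCounts} to $p_{\tilde c}$: counting precritical points on $[\beta,-\beta]$ gives the same exponential growth rate as counting on $[\alpha,-\alpha]$, so $\limsup_k\frac1k\log\widehat N_{\tilde c}(k)=\tilde h(\tilde c)$. Combining with the previous step,
\[
\tilde h(c)=\limsup_{n}\frac{1}{n}\log\tilde N_c(n)=\limsup_{k}\frac{1}{mk}\log\widehat N_{\tilde c}(k)=\frac{1}{m}\tilde h(\tilde c),
\]
as required.

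The main obstacle I anticipate is the first step: making the pairing $\alpha\leftrightarrow\beta$ and $-\alpha\leftrightarrow-\beta$, and the identification of the whole arc $[\alpha,-\alpha]$ with $[\beta,-\beta]$, rigorous outside the postcritically finite (and locally connected) setting, where $K_0$ and $K(p_{\tilde c})$ need not be honest dendrites. In the postcritically finite case this is a direct consequence of $A_n=\emptyset$ (the characterization of immediate satellite renormalization recalled before the lemma) together with the cyclic structure of the $K_i$. In general one must reinterpret the identification combinatorially, verifying that the tuning operation on laminations preserves the endpoint pairing and the entire separating arc between them. Once this topological/combinatorial backbone is secured, the orbit-parity count and the final $\limsup$ computation are routine bookkeeping.
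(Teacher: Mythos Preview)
Your proposal is correct and follows essentially the same route as the paper: both identify the arc $[\alpha,-\alpha]$ for $p_c^{\circ m}$ with $[\beta,-\beta]$ for $p_{\chi(c)}$ via straightening, deduce $N_c(mk)=\widehat N_{\chi(c)}(k)$ and $N_c(n)=0$ for $m\nmid n$, and then invoke Lemma~\ref{lem:DifferCounts} to pass between arcs. One minor point: Lemma~\ref{lem:DifferCounts} literally compares $[\alpha,-\alpha]$ with $[\alpha,\beta]$, not with $[\beta,-\beta]$; the paper bridges this with the one-line observation that $[\beta,-\beta]$ carries at most twice as many precritical points of each generation as $[\alpha,\beta]$ (by the symmetry $z\mapsto -z$), which you should make explicit. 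Your worry about the non--locally-connected case is legitimate but is absorbed by the paper's combinatorial setup: the identification $[\alpha,-\alpha]\leftrightarrow[\beta,-\beta]$ is asserted at the level of laminations and ray pairs (as recalled just before the lemma), so no topological regularity of $K_0$ is needed.
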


\begin{proof}
For convenience, in this proof we will count the number of relevant pre-critical points of \emph{strict generation} $n$; i.e. relevant pre-critical points of $p_c$, and similar for other polynomials, that are in 
\[p_c^{\circ (-n)}(c)\sm \bigcup _{i=1}^{n-1}p_c^{\circ (-i)}(c).\]
Clearly, this count also gives the entropy of $p_c$. (Indeed, suppose $N_c(n)$ counts relevant pre-critical points of strict generation $n$ while $N'_c(n)$ counts  pre-critical points of ``non-strict'' generation $n$; these numbers are different only when a precritical point of generation $n$, i.e.\ an element of $p_c^{\circ (-n)}(c)$, also is precritical of generation $n'<n$, i.e.\ when the critical point is periodic of period dividing $n-n'$. So if $0$ is periodic, say of minimal period $m$, then $N'_c(n) -N'_c(n-m) \le N_c(n)\le N'_c(n)$. Therefore, $N'_c(n)$ and $N_c(n)$ have the same growth rate.)

Let $c_0$ be the center of $\M'$.  Denote by $N_{c_0}(n)$ and $N_c(n)$ the numbers of relevant pre-critical points on $[\alpha,-\alpha]$ of $p_{c_0}$ and $p_c$  of strict generation $n$. Let us denote by $\tilde N_{\chi(c)}$ the number of pre-critical points on $[\beta,-\beta]$ of $p_{\chi(c)}$ of strict generation $n$.  We will prove the lemma by relating $N_{c_0}(n)$, $\tilde N_{\chi(c)}$, and $N_c(n)$, see~\eqref{eq:Lem:Renorm} below.

First let us note that the growth rate of $\tilde N_{\chi(c)}$ is the entropy of $\chi(c)$. Indeed, by Lemma~\ref{lem:DifferCounts}, the precritical points on $[\alpha,-\alpha]$ and those on $[\alpha,\beta]$ give the same entropies; and on $[\beta,-\beta]$ there can be at most twice as many precritical points of any generation $n$ as on $[\alpha,\beta]$. 

Denote by $F_0$ the unique periodic Fatou component of $p_{c_0}$ containing the critical point. A pre-image $F'$ of $F_0$ under $p_{c_0}^{\circ(n-1)}$ will be called \emph{relevant of strict generation $n$} if the center of $F'$ is on $[\alpha,-\alpha]$ and $F'$ is not a pre-image of $F_0$ under $p_c^{\circ i}$ for any $i<n-1$. Then $N_{c_0}(n)$ counts the number of relevant pre-images of $F_0$ of strict generation $n$.

 For a relevant pre-image $F'$ of $F_0$, say of strict generation $n$, the intersection of $\ovl F'$ with the Hubbard tree of $p_{c_0}$ is equal to $\ovl F'\cap [\alpha,-\alpha]$ because the Hubbard tree of $p_{c_0}$ has no branch points in $F'$ (see Claim (B) in the proof of Lemma~\ref{Lem:HubbardTreeRenormalization}). Therefore,  $p_{c_0}^{\circ(n-1)}$ maps $\overline F'\cap [\alpha,\alpha]$ homeomorphically onto $\overline F_0\cap [\alpha,\alpha']$.

Let $K_{\chi(c)}$ be the filled in Julia set of $p_{\chi(c)}$. Then the filled in Julia set $K_c$ of $p_c$ is obtained from the filled in Julia set $K_{c_0}$ of $p_{c_0}$ by replacing the closure of each Fatou component $F'$ of $K_{c_0}$ with $K_{\chi(c)}$. And, moreover, if $F'$ is a relevant pre-image of $F_0$, then $\overline  F'\cap [\alpha,\alpha']$ is replaced by the arc $[\beta,-\beta]\subset K_{\chi(c)}$. The inserted copies of $ K_{\chi(c)}$ are small (periodic and pre-periodic) filled in Julia sets of $p_c$. A small filled in Julia set $K'$ of $p_c$ will be called \emph{relevant} if the intersection $K'\cap [\alpha,-\alpha]$ has more than two points. Equivalently, $K'$ is obtained by inserting $K_{\chi(c)}$ into a relevant Fatou component of $p_{c_0}$. Let us denote by $K_0$ the small filled in Julia set containing the critical point; i.e. $K_0$ is obtained from $F_0$. By construction, $N_{c_0}(n)$ counts the number of relevant pre-images of $K_0$ of strict generation $n$.

Let us now count relevant pre-critical points of $p_c$. If $x$ is such point, say of strict generation $n\ge 0$, then $x$ is within a relevant pre-image $K'$ of $K_0$. Suppose $K'$ has strict generation $i\ge 0$. Then after $i-1$ iteration $x$ is within $K_0$ and $p_c^{\circ (i-1)}(x)$ is a relevant pre-critical point of strict generation $n+1-i$. Since the dynamics of $p_c^{\circ m}:K_0\to K_0$ is identified with $p_{\chi(c)}:K_{\chi(c)}\to K_{\chi(c)}$, there are $\tilde N_{\chi(c)}\left(\frac{n-i}{m}+1\right)$ relevant pre-periodic points of strict generation $n+1-i$ in $K_0$, where we use the convention $\tilde N_{\chi (c)}(t)=0$ if $t\not\in \N$. We get: 
\begin{equation}
\label{eq:Lem:Renorm}
N_c(n)=\sum_{i=0}^{n}  N_{c_0}(i) \tilde N_{\chi(c)}\left(\frac{n-i}{m}+1\right),
\end{equation}
where $N_{c_0}(i)$ counts pre-images $K'$ of $K_0$ and $\tilde N_{\chi(c)}\left(\frac{n-i}{m}+1\right)$ counts the number of relevant pre-critical points of strict generation $n$ within $K'$. Since the growths of $N_c(n)$, $N_{c_0}(n)$, and $ N_{\chi(c)}(n)$ are the entropies of $p_c$, $p_{c_0}$, and $p_{\chi(c)}$ we get $\tilde h(c)=\max\left(\tilde h(c_0),\frac 1 m\tilde h(\chi( c))\right)$.  
\end{proof}


\begin{corollary}[Continuity Along Vein to $c_{\infty}$, General Case.]
\label{cor:RadialContin}
\lineclear
Suppose that $c_{\infty}$ is a non-dyadic endpoint of the Mandelbrot set. Then the entropy is continuous along the vein $[c_{\infty}, 0]$.
\end{corollary}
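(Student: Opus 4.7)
The plan is to reduce to Theorem~\ref{thm:ContAtIrrNonRen} by peeling off immediate satellite renormalizations one at a time via Lemma~\ref{Lem:Renormalization}. I split the argument by the number of nested immediate satellite copies of $\M$ that contain $c_\infty$.

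\emph{Finite case.} Suppose $c_\infty$ lies in only finitely many nested immediate satellite copies, and let $\M\supset\M^{(1)}\supset\cdots\supset\M^{(k)}$ be the maximal such chain, with composite straightening $\chi_k\colon\M^{(k)}\to\M$ of total period $M_k$, chosen so that $\chi_k(c_\infty)$ is no longer immediate satellite renormalizable. Let $r_j$ be the root of $\M^{(j)}$ in $\M$. I would decompose the vein as $[0,r_1]\cup[r_1,r_2]\cup\cdots\cup[r_{k-1},r_k]\cup[r_k,c_\infty]$. On each intermediate segment $[r_j,r_{j+1}]$ both endpoints lie on the boundary of the main cardioid of $\M^{(j)}$, and the combinatorial arc between them passes through that main cardioid along its internal rays; its $\chi_j$-image therefore lies in the main cardioid closure of $\M$, where $\tilde h\equiv 0$, so iterating Lemma~\ref{Lem:Renormalization} yields $\tilde h\equiv 0$ on the segment (and likewise on $[0,r_1]$). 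On the final segment $[r_k,c_\infty]$ the map $\chi_k$ is a bijection onto a sub-arc of $[0,\chi_k(c_\infty)]$, along which entropy is continuous by Theorem~\ref{thm:ContAtIrrNonRen} together with the rational-angle case \cite[Theorem~4.9]{Jung}; by Lemma~\ref{Lem:Renormalization}, $\tilde h(c)=(1/M_k)\,\tilde h(\chi_k(c))$, so this continuity pulls back. At each interface $r_j$ the values match ($\tilde h(r_j)=(1/M_j)\,\tilde h(1/4)=0$ from both sides), so continuity extends across the whole vein.

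\emph{Infinite case.} If $c_\infty\in\bigcap_{n\ge 1}\M^{(n)}$ with total periods $M_n\to\infty$, then iterating Lemma~\ref{Lem:Renormalization} gives $\tilde h(c)\le (\log 2)/M_n$ for every $c\in\M^{(n)}$, and in particular $\tilde h(c_\infty)=0$. The same segment analysis as in the finite case shows that each $[r_j,r_{j+1}]$ is contained in the main cardioid closure of $\M^{(j)}$ and hence has $\tilde h\equiv 0$. Thus $\tilde h\equiv 0$ on the entire vein $[0,c_\infty]$, and continuity is automatic.

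\emph{Main obstacle.} The substantive point is the combinatorial justification that the arc from $r_j$ to $r_{j+1}$ is traced through the (combinatorial) main cardioid of $\M^{(j)}$ along its internal rays; this follows from the description of the subwakes attached to an immediate satellite hyperbolic component and from the definition of the combinatorial vein via $\prec$, but must be stated carefully because $c_\infty$ is only a combinatorial endpoint. Matching at the interfaces $r_j$ is then immediate, so the whole corollary reduces to Theorem~\ref{thm:ContAtIrrNonRen} plus the renormalization identity.
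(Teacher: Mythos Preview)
Your approach is essentially the paper's: peel off immediate satellite renormalizations via Lemma~\ref{Lem:Renormalization} to reduce to Theorem~\ref{thm:ContAtIrrNonRen}, and in the infinitely nested case observe that entropy vanishes. The paper breaks the vein once at the main \emph{center} $c_0$ of the innermost copy (so that $\chi_*([c_0,c_\infty])=[0,\chi_*(c_\infty)]$ exactly, and $[0,c_0]$ is handled by Jung's postcritically finite result), whereas you break at the \emph{roots} $r_j$; note that your claim that $\chi_k([r_k,c_\infty])$ is a sub-arc of $[0,\chi_k(c_\infty)]$ is not quite right, since $\chi_k(r_k)=1/4$ lies outside that vein --- the image is $[1/4,\chi_k(c_\infty)]$, which contains an extra segment $[1/4,0]$ through the main cardioid where entropy vanishes anyway, so the argument survives.
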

\begin{proof}
Theorem~\ref{thm:ContAtIrrNonRen} proves the case when $c_{\infty}$ is non-renormalizable. 

The second case is when $c_{\infty}$ is finitely many times renormalizable. Then is there is a renormalization $\chi \colon\M'\to \M$ such that $c_{\infty}\in \M'$ and $c'_{\infty}:=\chi(c_{\infty})$ is non-renormalizable.  Entropy is continuous along the vein $[0,c'_{\infty}]$ by Theorem~\ref{thm:ContAtIrrNonRen}, and continuous along the image $\chi_*^{-1}([0,c'_{\infty}])=[c_0,c_{\infty}]$ by Lemma~\ref{Lem:Renormalization} (where $c_0$ is the main center of $\M'$), and continuous along $[0,c_0]$ by \cite[Theorem~4.9]{Jung} (or Theorem~\ref{thm:ContAtIrrNonRen}). Therefore, entropy is continuous along $[0,c_{\infty}]$.

The final case is that  there is an infinite sequence $\chi_1:\M_1\to \M$,  $\chi_2:\M_2\to \M$, $\dots$  of renormalizations such that $\M_n \subsetneq \M_{n-1}$ and $c_\infty\in \M_n$ for all $n\ge 0$. Let $c_n$ be the center of $\M_n$; i.e. $c_n=\chi_n ^{-1}(0)$. Then $c_1\prec c_2 \prec \dots \prec c_{\infty}$, and it is sufficient to show that $\lim_n \tilde h(c_n)$ is equal to $\tilde h(c_{\infty})$.

Suppose that $\chi_m\colon\M'_n\to \M$ is an $m_n$-renormalization. Using Lem\-ma~\ref{Lem:Renormalization} we get
\[\tilde h(c_{\infty})=\max\left(\tilde h(c_n),\frac 1 {m_n}\tilde h(\chi( c_{\infty}))\right)\le \tilde h(c_n)+\frac 1 {m_n}h(\chi( c_{\infty})).\] But
$\tilde h(c_n)+\frac 1 {m_n}h(\chi( c_{\infty}))\le \tilde h(c_n)+\frac 1 {m_n}\log 2$ has the same limit as $\tilde h(c_n)$ because $\lim_n m_n = +\infty$. Thus entropy is continuous along $[0,c_{\infty}]$.
\end{proof}

\reminder{Total number of reminders: \arabic{treminder}}


\newpage

\renewcommand{\top}{{\scriptscriptstyle\mathrm{top}}}
\newcommand{\comb}{{\scriptscriptstyle\mathrm{comb}}}

\appendix
\section{\ \\Core entropy and biaccessibility dimension}
\centerline{by Wolf Jung}
\markleft{D.~DUDKO, D.~SCHLEICHER; APPENDIX BY W.~JUNG}

\bigskip

Here various definitions of core entropy shall be discussed and related to
the biaccessibility dimension. On a compact metric space, the topological
entropy of a continuous map is defined by a growth rate, which is referring
to preimages of covers, or to $\eps$-shadowing sets. See
\cite{BrinStuck, deMelovanStrien} for details. When the underlying space is a
compact interval, a finite tree, or graph, several equivalent characterizations
are due to Misiurewicz and others \cite{alm}. These include the growth rate of
horse shoes, laps (monotonic branches), periodic points,
and preimages of a general point.
For real and complex quadratic polynomials, core entropy was defined by
Tao Li \cite{taoli} and Bill Thurston \cite{TanLeiEntropy, bghkltt}
as the topological entropy of $p_c(z)$ on the Hubbard tree;
this definition applies to the postcritically finite case in
particular, and more generally to finite or infinite compact trees, but it
does not work when $c$ is an endpoint with a dense postcritical orbit. 

In a fairly general situation, the filled Julia set $\mathcal{K}_c$ is
path-connected with empty interior. Then $\mathcal{K}_c$ consists of the
Hubbard tree $T_c$\,, the countable family of its preimages, and an uncountable
family of endpoints. The dynamics on $T_c$ is interesting because this tree is
folded over itself, while the iteration does not return to strict preimages 
of $T_c$\,. On the other hand, with the single exception of $c=-2$
\cite{Zdunik, Smirnov, MeerkampSchleicher}, the endpoints form a set of full
harmonic measure, while the external angles of $T_c$ and its preimages form
a set of Hausdorff dimension $<1$. Since all biaccessible points are contained
in arcs iterated to $T_c$\,, these angles are called biaccessible
(or biaccessing). More precisely, the biaccessibility dimension is defined as
follows:
\begin{itemize}
\item For the lamination generated by an angle $\theta\in \Circle$
\cite{ThurstonLaminations}, consider all
angles of non-trivial leaves. Their Hausdorff dimension is the
\emph{combinatorial biaccessibility dimension} $B_\comb(\theta)$. The same
dimension is obtained from pairs of angles with the same itinerary, or from
pairs not separated by the precritical leaves: the diameter joining $\theta/2$
and $(\theta+1)/2$, and its preimages.
\item For a parameter $c\in\mathcal{M}$, the
\emph{topological biaccessibility dimension} $B_\top(c)$ is the Hausdorff
dimension of those angles for which the dynamic ray is landing together with
another ray.
\end{itemize}

These definitions are related analogously to Lemma~\ref{Lem:defEntropy}:

\begin{lemma}[Combinatorial and topological biaccessibility]
\lineclear
Suppose that $\theta\in \Circle$ and $c\in\partial\M$ belongs to the impression
of the parameter ray with angle $\theta$, or $c\in\M$ is hyperbolic and the
ray lands at the corresponding root. Then $B_\comb(\theta)=B_\top(c)$.
\end{lemma}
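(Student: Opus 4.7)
The approach is to identify the combinatorial and topological biaccessible sets as subsets of $S^1$ whose Hausdorff dimensions can both be computed via the core-entropy framework of the main body. Let $A_\comb \subset S^1$ denote the set of angles appearing as endpoints of non-trivial leaves of $L_\theta$, and $A_\top \subset S^1$ the set of angles whose dynamic rays in $p_c$ land together with at least one other ray; so $B_\comb(\theta) = \dim_H A_\comb$ and $B_\top(c) = \dim_H A_\top$.

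First I would match the rational parts of the two sets. Rational dynamic rays always land for $p_c$ because its Julia set is connected, and by the Correspondence Theorem (Theorem~\ref{thm:Corresp}) the family of rational ray pairs $\{\phi_1,\phi_2\}$ landing together in $p_c$ coincides with the family of rational leaves of $L_\theta$. Hence $A_\top \cap (\Q/\Z) = A_\comb \cap (\Q/\Z)$. The inclusion $A_\top \subseteq A_\comb$ extends to irrational angles at once from the closedness of $L_\theta$: two distinct angles whose rays land at a common point can be approximated by rational pairs whose rays land at nearby points, producing rational leaves converging to the irrational pair, which is therefore itself a leaf.

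Next I would compute both Hausdorff dimensions by a Moran–Bowen type formula applied to the self-similar dynamics of the doubling map on each set. Away from the critical leaf (respectively the critical ray pair), the two inverse branches of doubling preserve both $A_\comb$ and $A_\top$ and act as contractions by factor $1/2$, making each a limit set of a conformal iterated function system on $S^1$. The associated pressure equation gives
\[
\dim_H A_\comb \;=\; \frac{h(\theta)}{\log 2}, \qquad \dim_H A_\top \;=\; \frac{\tilde h(c)}{\log 2},
\]
where $h(\theta)$ and $\tilde h(c)$ are the exponents from Definitions~\ref{Def:CoreEntropy} and~\ref{Def:CoreEntropyTilde}; that the choice of reference arc separating $\alpha$ from $-\alpha$ versus, say, $\beta$ does not affect the exponent is the content of Lemma~\ref{lem:DifferCounts}. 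Combining these two formulae with the equality $h(\theta) = \tilde h(c)$ — the compatibility of Definitions~\ref{Def:CoreEntropy} and~\ref{Def:CoreEntropyTilde} spelled out in Section~\ref{Sec:Definitions}, which covers both the impression case and the hyperbolic-root case — yields $B_\comb(\theta)=B_\top(c)$.

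The main obstacle is the dimension formula $B_\top(c) = \tilde h(c)/\log 2$ when the Julia set of $p_c$ is not locally connected. In the postcritically finite or locally connected setting this is classical (Zakeri, Zdunik, Bruin–Schleicher). In general, some non-trivial leaves of $L_\theta$ correspond only to rays with a common impression rather than a common landing point, so a priori $A_\top$ could be strictly smaller than $A_\comb$. The resolution is that any such irrational leaf is a Hausdorff limit of rational leaves, whose endpoints do lie in $A_\top$ by the rational correspondence above; under the contracting IFS these rational approximants have comparable scaling to the "missing" leaf, so the pressure computation for $A_\top$ sees the same exponent as that for $A_\comb$. Carrying this transfer through carefully — preserving the bounded distortion and contraction estimates needed for the Moran formula uniformly over preimages — is the technical heart of the argument.
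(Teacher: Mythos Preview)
Your route is genuinely different from the paper's and, as written, has a circularity problem. The paper proves the lemma \emph{directly}: it argues that, up to an exceptional set of angles whose Hausdorff dimension can be bounded explicitly (angles of non-landing rays, angles at Cremer cycles, precritical angles), the set $A_\top$ coincides with $A_\comb$; hence the dimensions agree. No entropy enters. You instead try to compute each dimension separately as $h(\theta)/\log 2$ and $\tilde h(c)/\log 2$ and then invoke $h(\theta)=\tilde h(c)$. But the identity $B_\top(c)=\tilde h(c)/\log 2$ is exactly Theorem~\ref{Thm:Biaccessibility}, which in the paper is proved \emph{after} and \emph{using} this lemma (together with continuity of entropy). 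Your independent justification via a Moran--Bowen/IFS pressure formula is not a proof: the biaccessible set is not the attractor of a standard conformal IFS with bounded distortion, and your last paragraph concedes that the crucial transfer in the non-locally-connected case (``comparable scaling'', ``sees the same exponent'') is precisely what would need to be established. That is the gap.

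There are also smaller problems in the set-comparison step. Theorem~\ref{thm:Corresp} as stated concerns only \emph{characteristic} rational ray pairs, not arbitrary ones, so it does not by itself give $A_\top\cap(\Q/\Z)=A_\comb\cap(\Q/\Z)$; you need the full orbit-portrait correspondence. And the approximation argument for $A_\top\subseteq A_\comb$ is not valid as phrased: two rays landing at a common point need not be approximated by \emph{rational pairs landing together}; what one actually uses is that co-landing forces identical itineraries, hence non-separation by precritical leaves, hence membership in a leaf of $L_\theta$. If you want to salvage your strategy, the honest thing to do is exactly what the paper sketches: show that $A_\comb\setminus A_\top$ is contained in an explicit countable union of sets (non-landing angles, Cremer angles) each of Hausdorff dimension strictly below $B_\comb(\theta)$ --- which is a statement about sets, not about pressure.
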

\begin{proof}[Sketch of proof \cite{SymDyn, Jung}] 
\looseness-1
In the locally
connected case, and neglecting the countable set of angles at precritical
or precharacteristic points, two dynamic rays are landing together if and only
if they are not separated by a precritical ray-pair. (This separation line
contains additional internal arcs when the interior is not empty.)
When $\mathcal{K}_c$ is not locally connected, exceptional sets of angles are
shown to be negligible in terms of Hausdorff dimension: a Cremer periodic point
might be topologically biaccessible \cite{SchleicherZakeri}, but the relevant
angles have Hausdorff dimension 0 \cite{BullettSentenac}. On the other hand, in
an infinitely renormalizable situation, rays with combinatorially biaccessing
angles may fail to land together, or to land at all. But their Hausdorff
dimension is 0 in the case of the main molecule, and it is less than the
dimension of non-renormalizable biaccessing angles in the primitive maximal
case \cite{Jung}.
\end{proof}

The following relation to entropy is due to Thurston \cite{TanLeiEntropy},
relying on earlier work by Furstenberg \cite{Furstenberg}
and Douady \cite{Douady}.

\begin{proposition}[Dimension and entropy of the tree]\label{Prop:Biaccessibility} \lineclear
Suppose that either
\begin{itemize}
\item
 $\mathcal{K}_c$ is locally connected with empty interior, or 
\item $f_c$ is parabolic, or 
\item $f_c$ is hyperbolic so that the attracting orbit has real multiplier.
\end{itemize}
Using regulated arcs when necessary, define the tree $T_c$ as the path-connected hull of
the critical orbit. If $T_c$ is compact, consider the topological entropy of
$p_c(z)$ on $T_c$\,. Then it is related to
the biaccessibility dimension by $h_\top(T_c)=B_\top(c)\cdot\log2$.
\end{proposition}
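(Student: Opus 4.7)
The plan is to realize both sides of the desired equality through the symbolic dynamics of angle-doubling on $\mathbb{S}^1$ and then invoke a Furstenberg-style formula relating the Hausdorff dimension of a doubling-invariant subset of the circle to its topological entropy. First I would use the Misiurewicz--Szlenk description of topological entropy on a compact tree: $h_\top(T_c)$ equals the exponential growth rate of the number of laps (maximal monotone branches) of $p_c^{\circ n}|_{T_c}$, and equivalently $\limsup_n (1/n)\log N_x(n)$, where $N_x(n)$ is the number of $n$-th preimages in $T_c$ of a generic point $x\in T_c$.

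Second, I would set up the bridge to external angles. Under the standing hypotheses $\mathcal{K}_c$ is locally connected (outright, or because $p_c$ is hyperbolic/parabolic so that the regulated arcs assembling $T_c$ are well defined and $\partial\mathcal{K}_c$ is locally connected), so the Carath\'eodory map $\gamma\colon\mathbb{S}^1\to\partial\mathcal{K}_c$ exists. Let $B\subset\mathbb{S}^1$ be the set of biaccessible angles; then $B$ is closed and invariant under $d(\theta)=2\theta$, and $\gamma$ semiconjugates $d|_B$ to $p_c$ on the biaccessible subset of $\partial\mathcal{K}_c$, which (modulo a countable exceptional set negligible for Hausdorff dimension) includes $T_c$ and all its iterated preimages. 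By the biaccessibility lemma preceding the proposition, $B_\top(c)=\dim_H B$.

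Third, the upper bound $\dim_H B\le h_\top(T_c)/\log 2$ follows from a direct covering argument. Each lap of $p_c^{\circ n}|_{T_c}$ meets $\partial\mathcal{K}_c$ in two endpoints whose external angles cut $\mathbb{S}^1$ into arcs; since $d$ expands uniformly by a factor of $2$, the $L_n=e^{(h_\top(T_c)+o(1))n}$ laps produce $O(L_n)$ complementary arcs of length $O(2^{-n})$ whose endpoints accumulate on $B$, giving a cover of $B$ with the claimed dimensional bound. For the reverse inequality I would produce a $p_c$-invariant probability measure $\mu$ on $T_c$ with metric entropy $h_\mu$ arbitrarily close to $h_\top(T_c)$ (via the Markov/Parry measure on the edge automaton of $T_c$, irreducible outside renormalization blocks), lift it through $\gamma$ to a $d$-invariant measure on $B$ with the same entropy, and apply Furstenberg's theorem (or Billingsley's lemma together with the Shannon--McMillan--Breiman local dimension formula $h_\mu/\log 2$) to conclude $\dim_H B\ge h_\mu/\log 2$. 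Taking $h_\mu\to h_\top(T_c)$ gives the matching lower bound.

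The main obstacle is the hyperbolic/parabolic case, where $T_c$ passes through bounded Fatou components via regulated arcs: one must check that the Misiurewicz--Szlenk lap-count still applies to the continuous map $p_c|_{T_c}$ (which it does, since $T_c$ is a compact topological tree with a continuous self-map of bounded local degree) and that the Fatou portions of $T_c$, whose forward dynamics is eventually periodic with zero entropy, contribute nothing to $L_n$ beyond a multiplicative constant. A secondary technicality is identifying the negligible exceptional sets of angles (precritical, non-landing, Cremer-type) so that $B_\top(c)$ really equals $\dim_H B$; this is exactly what the combinatorial/topological biaccessibility lemma established, so no further work is needed here.
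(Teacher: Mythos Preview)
Your strategy is sound and would succeed, but it is considerably more laborious than the paper's argument, which dispatches the proposition in two sentences by quoting two off-the-shelf results. First, the Carath\'eodory loop restricted to the angles of $T_c$ is a continuous surjection onto $T_c$ with \emph{finite fibers} semiconjugating angle-doubling to $p_c$; a standard fact (cited from de~Melo--van~Strien) says that a finite-to-one factor map preserves topological entropy, so $h_\top(T_c)$ equals the entropy of doubling on the closed invariant set $\Theta:=\gamma^{-1}(T_c)\subset\mathbb{S}^1$. Second, Furstenberg's theorem gives an \emph{equality}, not just one inequality: for any closed doubling-invariant $\Theta\subset\mathbb{S}^1$ one has $h_\top(d|_\Theta)=\dim_H(\Theta)\cdot\log 2$. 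Since $B_\top(c)=\dim_H\Theta$ (the full biaccessible set is a countable union of iterated preimages of $\Theta$), the result follows.

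By contrast, you split into an upper bound via a lap-counting cover and a lower bound via lifting a near-maximal-entropy measure; both halves are reinventing pieces of the two theorems above. Your route has the merit of being more self-contained, but your covering step is stated loosely (laps are arcs in $T_c$, not in $\mathbb{S}^1$; you must pull back the lap endpoints to precritical angle pairs and argue that the resulting $O(L_n)$ arcs in $\mathbb{S}^1$ each have length $O(2^{-n})$ because $d^{\circ n}$ is injective on them), and your lower bound appeals to Furstenberg anyway. If you already accept Furstenberg, you may as well use the full equality and skip the separate upper bound; and the finite-fiber semiconjugacy result makes the Misiurewicz--Szlenk detour through lap numbers unnecessary.
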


\begin{proof}
The proof is found in version 1 of \cite{BruinSchleicher} and in
\cite{TiozzoThesis, Jung}:
since $\theta\mapsto z(\theta)$ is a semi-conjugation with finite fibers, we may
consider the topological entropy of the angle-doubling map on the compact set
of angles of $T_c$ \cite[Thm.~II.7.1]{deMelovanStrien}. And this equals the
Hausdorff dimension \cite[Proposition~III.1]{Furstenberg}, except for the
base 2 instead of $e$ in the logarithm of the growth factor $\lambda$. 
\end{proof}

While the definition of $h_\top(T_c)$ requires a compact tree $T_c$\,,
a general notion was given in Definition~\ref{Def:CoreEntropy} in terms
of precritical ray pairs, and its relation to the biaccessibility dimension
shall be discussed now:

\begin{theorem}[Dimension and entropy in general]
\label{Thm:Biaccessibility} \lineclear
Entropy and biaccessibility dimension are related as follows for
all parameters $c\in\mathcal{M}$ and all angles $\theta\in \Circle$\,:
\begin{equation}
\tilde h(c)=B_\top(c)\cdot\log2
\qquad\mbox{and}\qquad
h(\theta)=B_\comb(\theta)\cdot\log2 \ .
\end{equation}
\end{theorem}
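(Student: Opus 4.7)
The plan is to reduce both identities to the single combinatorial identity $h(\theta)=B_\comb(\theta)\log 2$, establish it first for postcritically finite non-renormalizable parameters where the tree is compact, and extend to all $\theta$ by continuity together with Furstenberg's dimension--entropy formula applied to the leaf set of the lamination $L_\theta$.

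First, I would reduce to the combinatorial identity. By construction we have $\tilde h=h\circ\pi$, where $\pi\colon\M\to\M_{abs}$ is the projection to the abstract Mandelbrot set. By the preceding lemma on combinatorial and topological biaccessibility, $B_\top(c)=B_\comb(\theta)$ whenever $c$ is in the impression of (or the hyperbolic root of) the parameter ray at angle $\theta$. Hence the topological identity $\tilde h(c)=B_\top(c)\log 2$ follows from the combinatorial identity $h(\theta)=B_\comb(\theta)\log 2$, and it suffices to prove the latter for every $\theta\in\mathbb{S}^1$.

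For the base case I would take $\theta$ such that $c(\theta)$ is postcritically finite and non-renormalizable. Here the Hubbard tree $T_{c(\theta)}$ is a finite compact tree, so Proposition~\ref{Prop:Biaccessibility} gives $h_\top(T_{c(\theta)})=B_\top(c(\theta))\log 2$. On the other hand Lemma~\ref{Lem:defEntropy} identifies $h(\theta)$ with $h_\top(T_{c(\theta)})$, and the previous paragraph identifies $B_\top(c(\theta))$ with $B_\comb(\theta)$. Combining these three steps yields the identity at every postcritically finite non-renormalizable $\theta$. The postcritically finite immediate-satellite-renormalizable case is handled by Lemma~\ref{Lem:Renormalization}, which says $\tilde h(c)=\tilde h(\chi(c))/m$, together with the parallel statement $B_\top(c)=B_\top(\chi(c))/m$: the biaccessible angles of $c$ are obtained from those of $\chi(c)$ by the $m$-fold covering $\theta\mapsto\theta$ on each of the $m$ periodic components of the little Julia set, which scales Hausdorff dimension by $1/m$. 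Nested immediate satellite renormalizations are treated by iterating this scaling, exactly paralleling the proof of Corollary~\ref{cor:RadialContin}.

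For arbitrary $\theta$ I would pass to the general combinatorial identity via Furstenberg's theorem applied to the leaf set $\Lambda_\theta\subset\mathbb{S}^1$ consisting of all endpoints of non-trivial leaves of $L_\theta$. This set is closed and forward invariant under the doubling map $d$, and Furstenberg's formula (Proposition~III.1 of \cite{Furstenberg}, as used in the proof of Proposition~\ref{Prop:Biaccessibility}) gives
\[
\dim_H(\Lambda_\theta)\cdot\log 2 \;=\; h_\top(d|_{\Lambda_\theta})\,.
\]
It thus remains to show $h_\top(d|_{\Lambda_\theta})=h(\theta)$. This is an equality between the topological entropy of a doubling map on a closed invariant set and a $\limsup$ counting relevant precritical leaves; in the postcritically finite non-renormalizable case it is precisely the content of the combination above. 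To pass to a general $\theta$ I would sandwich: the relevant precritical leaves of generation $n$ have endpoints in $\Lambda_\theta$ and are preimages of the minor leaf, giving $h(\theta)\le h_\top(d|_{\Lambda_\theta})$ directly from the definition of topological entropy via preimages of a generic point. For the reverse inequality I would use continuity of $h$ (Theorem~\ref{Thm:Continuity}), approximating $\theta$ by postcritically finite non-renormalizable angles $\theta_n$ with $h(\theta_n)\to h(\theta)$, and combine with the upper semicontinuity of $\dim_H(\Lambda_{\theta'})$ coming from Hausdorff convergence of laminations to conclude $B_\comb(\theta)\log 2\le h(\theta)$.

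The main obstacle I expect is the non-locally-connected case, in particular Cremer/Siegel parameters, where the set of biaccessible angles in the dynamical plane may differ from $\Lambda_\theta$ (for instance because of non-landing rays or rays potentially landing at Cremer cycles). The resolution is that these exceptional sets of angles have Hausdorff dimension strictly smaller than $B_\comb(\theta)$, hence do not affect the dimension identity $B_\top(c)=B_\comb(\theta)$; this is the delicate harmonic-measure argument worked out in \cite{BruinSchleicher,Jung} and which I would import rather than reprove here.
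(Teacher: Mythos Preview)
Your reduction to the combinatorial identity and the base case (postcritically finite parameters via Proposition~\ref{Prop:Biaccessibility} and Lemma~\ref{Lem:defEntropy}) are fine, and the renormalization scaling is standard. The gap is in your extension to general $\theta$, specifically in the inequality $B_\comb(\theta)\log 2\le h(\theta)$.

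You propose to obtain this from continuity of $h$ together with ``upper semicontinuity of $\dim_H(\Lambda_{\theta'})$ coming from Hausdorff convergence of laminations''. This step does not work. First, Hausdorff dimension is in general neither upper nor lower semicontinuous under Hausdorff convergence of sets (finite $1/n$-nets in $[0,1]$ converge to $[0,1]$, for instance). Second, even if upper semicontinuity held, it points the wrong way: from $B_\comb(\theta_n)\to h(\theta)/\log 2$ and $\limsup B_\comb(\theta_n)\le B_\comb(\theta)$ you would only recover $h(\theta)\le B_\comb(\theta)\log 2$, the inequality you already have. If instead you approximate along the vein with $c(\theta_n)\prec c(\theta)$, monotonicity of laminations again gives $B_\comb(\theta_n)\le B_\comb(\theta)$, which once more yields only the easy direction. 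In short, continuity and monotonicity together squeeze the easy inequality from both sides but never touch the hard one.

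The paper closes this gap by a direct covering argument rather than any semicontinuity: the precritical ray pairs up to level $n$ cut the circle into pieces, the angles of each piece form at most $n$ arcs of total length $2^{-n}$, and the biaccessible angles of $[\alpha,-\alpha]$ are covered by the $V(n)=1+N(1)+\cdots+N(n)$ pieces meeting that arc. Since $V(n)$ grows like $e^{hn}$, the $b$-dimensional Hausdorff measure is bounded by $\lim n\cdot V(n)\cdot 2^{-bn}=0$ whenever $b>h/\log 2$, giving $B_\top(c)\le \tilde h(c)/\log 2$ outright. This explicit estimate is the missing idea in your argument.
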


\begin{proof} First, suppose that $c=c(\theta)$ is postcritically finite or
belongs to a dyadic vein. In particular, $\mathcal{K}_c$ is
locally connected and $T_c$ is compact with finitely many endpoints. Then
the growth rate of monotonic branches on $T_c$ is equal to the growth rate of
precritical points on $[\alpha_c\,,-\alpha_c]$, so $h_\top(T_c)=\tilde h(c)$,
and Proposition~\ref{Prop:Biaccessibility} applies.

Second, suppose that $c$ is a non-renormalizable irrational endpoint; 
approximate it with biaccessible parameters $c_n\prec c$. Then monotonicity of
$B_\top(c)$ \cite[Proposition~4.6]{Jung} 
(which in turn is a consequence of monotonicity of characteristic leaves of the lamination, in analogy to Lemma~\ref{Lem:Monotonicity}) and continuity of $\tilde h(c)$ give
\begin{equation}
B_\top(c)\cdot\log2\ge\lim B_\top(c_n)\cdot\log2
=\lim\tilde h(c_n)=\tilde h(c) \ .
\end{equation}
For the opposite estimate, note that the plane is cut into pieces successively
by precritical ray pairs, and the angles of a piece of level $n$ form up to $n$
intervals of total length $2^{-n}$ according to
\cite[Lemma~4.1]{BruinSchleicher}. Recall that $N(n)$ is the number of
precritical points of generation $n$ on $[\alpha_c\,,-\alpha_c]$, and denote
the number of level-$n$ pieces intersecting the arc $[\alpha_c\,,-\alpha_c]$
by $V(n)$. Then $V(n)=1+N(1)+\dots+N(n)$ is growing by the same factor
$\lambda=e^h$ as $N(n)$, and the same holds for $n \cdot V(n)$. The
$b$-dimensional Hausdorff measure of the angles of $[\alpha_c\,,-\alpha_c]$ is
estimated as
\begin{equation}
\mu_b\le\lim n \cdot V(n) \cdot 2^{-bn} \ ,
\end{equation}
which is 0 when $b>\log\lambda/\log2=\tilde h(c)/\log2$. So the Hausdorff
dimension is estimated as $B_\top(c)\le\tilde h(c)/\log2$ as well,
implying equality.

\looseness-1
Finally, both $\tilde h(c)$ and $B_\top(c)$ are constant on the main molecule
and on primitive small Mandelbrot sets: for $B_\top(c)$ see
\cite[Theorem~4.7]{Jung}, and $\tilde h(c)$ is constant on a dense subset and
continuous. Moreover, both scale by the period of
immediate satellite renormalization, see Lemma~\ref{Lem:Renormalization}.
Now all cases are covered by the Yoccoz Theorem \cite{MiRenorm}. 
\end{proof}

Continuity of entropy according to Theorem~\ref{Thm:Continuity} gives:

\begin{corollary}[Continuity of biaccessibility dimension]
\lineclear
The biaccessibility dimension $B_\comb(\theta)$ is continuous on $\Circle$ and
$B_\top(c)$ is continuous on the Mandelbrot set $\mathcal{M}$.
\end{corollary}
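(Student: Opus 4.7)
The plan is to observe that this corollary is an essentially immediate consequence of Theorem~\ref{Thm:Biaccessibility} together with Theorem~\ref{Thm:Continuity}. Specifically, Theorem~\ref{Thm:Biaccessibility} establishes the pointwise identities
\[
B_\top(c) = \frac{\tilde h(c)}{\log 2}
\qquad\text{and}\qquad
B_\comb(\theta) = \frac{h(\theta)}{\log 2}
\]
valid for every $c\in\M$ and every $\theta\in\Circle$. Thus $B_\top$ and $B_\comb$ are obtained from $\tilde h$ and $h$ by dividing by the positive constant $\log 2$.

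Since scalar multiplication by a nonzero constant preserves continuity, the continuity of $B_\top\colon\M\to[0,1]$ on $\M$ follows from the continuity of $\tilde h\colon\M\to[0,\log2]$, and the continuity of $B_\comb\colon\Circle\to[0,1]$ on $\Circle$ follows from the continuity of $h\colon\Circle\to[0,\log2]$. Both continuity statements for entropy are provided by Theorem~\ref{Thm:Continuity}, which the appendix may invoke as an established result of the main paper. There is no real obstacle: the entire content of the corollary lies in Theorem~\ref{Thm:Biaccessibility} (which identifies the two quantities up to the factor $\log 2$) and in Theorem~\ref{Thm:Continuity} (which provides continuity of entropy); combining them takes one sentence.
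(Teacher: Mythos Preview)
Your proposal is correct and matches the paper's approach exactly: the paper also deduces the corollary in one line from Theorem~\ref{Thm:Biaccessibility} together with the continuity of entropy (Theorem~\ref{Thm:Continuity}). The only minor quibble is that Theorem~\ref{Thm:Continuity} as stated covers $h$ on $\Circle$; continuity of $\tilde h$ on $\M$ then follows via the projection $\pi\colon\M\to\M_{abs}$ as explained in Section~\ref{Sec:Definitions}, which you implicitly use.
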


Proposition~\ref{Prop:Biaccessibility} and Theorem~\ref{Thm:Biaccessibility}
show that the definition
of entropy in terms of precritical points is a generalization
of the original definition in terms of a compact core:

\begin{corollary}[Extending the definition of core entropy]
\lineclear
We have $h_\top(T_c)=\tilde h(c)$ whenever $T_c$ is defined and compact.
\end{corollary}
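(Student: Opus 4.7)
The plan is essentially a one-line combination of the two preceding results. Theorem~\ref{Thm:Biaccessibility} already supplies $\tilde h(c) = B_\top(c)\cdot\log 2$ for every $c\in\M$ with no regularity hypothesis whatsoever, while Proposition~\ref{Prop:Biaccessibility} supplies $h_\top(T_c) = B_\top(c)\cdot\log 2$ under exactly the hypothesis that $T_c$ is defined and compact (together with the regularity condition on $\mathcal{K}_c$ or $p_c$ spelled out there). Equating the two expressions immediately yields $h_\top(T_c) = \tilde h(c)$, which is the claim.

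The only thing to verify is that the hypothesis ``$T_c$ is defined and compact'' of the corollary falls under the dichotomy of Proposition~\ref{Prop:Biaccessibility}. I would dispose of this in two sub-cases. If $\mathcal{K}_c$ has empty interior, then $T_c$ being defined at all presupposes local connectivity along the critical orbit (regulated arcs are defined through locally connected fibers), which is the first alternative of the proposition. If instead $\mathcal{K}_c$ has nonempty interior, then $p_c$ is hyperbolic or parabolic; in this setting, regulated arcs inside a bounded Fatou component are defined via internal rays, and the requirement that the resulting hull $T_c$ of the critical orbit be a compact real tree (rather than a spiralling accumulation on the attracting/parabolic cycle) forces the multiplier of that cycle to be real, which is the second alternative.

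With this bookkeeping in place, the corollary reduces to the one-line identity
\[
h_\top(T_c) \;=\; B_\top(c)\cdot\log 2 \;=\; \tilde h(c),
\]
where the first equality is Proposition~\ref{Prop:Biaccessibility} and the second is Theorem~\ref{Thm:Biaccessibility}. There is no real obstacle: the proof is purely a consequence of the two results that precede it, and the mild case analysis above just matches the hypotheses.
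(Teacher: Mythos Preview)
Your proposal is correct and follows exactly the paper's approach: the corollary is stated as an immediate consequence of Proposition~\ref{Prop:Biaccessibility} and Theorem~\ref{Thm:Biaccessibility}, chained through the common value $B_\top(c)\cdot\log 2$. Your additional paragraph matching the hypothesis ``$T_c$ defined and compact'' to the dichotomy in the Proposition is more detail than the paper itself provides, but it is reasonable bookkeeping and does not depart from the intended argument.
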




\vskip-0.8mm

\end{document}